\numberwithin{equation}{section}
\newtheorem{theorem}{Theorem}[section]
\newtheorem{lemma}[theorem]{Lemma}
\newtheorem{proposition}[theorem]{Proposition}
\theoremstyle{definition}
\theoremstyle{definition}
\newtheorem{definition}[theorem]{Definition}
\theoremstyle{definition}
\DeclareMathOperator{\divop}{div\,}
\DeclareMathOperator{\supp}{supp}
\DeclareMathOperator{\iRe}{Re\,}
\newcommand{\tth}{\tilde \theta}
\newcommand{\tu}{\tilde u}
\newcommand{\bR}{\mathbb{R}}
\newcommand{\bT}{\mathbb{T}}
\begin{document}

\title[Nonlinear instability for the surface quasi-geostrophic equation]{Nonlinear instability for the surface quasi-geostrophic equation in the supercritical regime}
\author[Aynur Bulut and Hongjie Dong]{Aynur Bulut and Hongjie Dong}
\address[A. Bulut]{Department of Mathematics, Louisiana State University, Baton Rouge, LA 70803-4918, USA}
\email{aynurbulut@lsu.edu}

\address[H. Dong]{Division of Applied Mathematics, Brown University,
182 George Street, Providence, RI 02912, USA}
\email{Hongjie\_Dong@brown.edu}

\thanks{Oct. 4, 2019}

\begin{abstract}
We consider the forced surface quasi-geostrophic equation with supercritical dissipation.  We show that linear instability for steady state solutions leads to their nonlinear instability.  When the dissipation is given by a fractional Laplacian, the nonlinear instability is expressed in terms of the scaling invariant norm, while we establish stronger instability claims in the setting of logarithmically supercritical dissipation.  A key tool in treating the logarithmically supercritical setting is a global well-posedness result for the forced equation, which we prove by adapting and extending recent work related to nonlinear maximum principles.  We believe that our proof of global well-posedness is of independent interest, to our knowledge giving the first large-data supercritical result with sharp regularity assumptions on the forcing term.
\end{abstract}

\maketitle

\section{Introduction and main results}

The surface quasi-geostrophic (SQG) equation is a mathematical model for a rapidly rotating fluid in certain asymptotic
regimes.  It has particular significance as a two-dimensional fluid model which captures several difficulties
arising in the study of the three-dimensional Euler and Navier-Stokes systems.  In this context, it is of particular
interest to understand instability phenomena for this and related evolution equations.

In \cite{FSV}, Friedlander, Strauss, and Vishik studied nonlinear instability phenomena for a class of abstract
evolution equations in Banach spaces.  They showed that linear instability of a steady state solution implies its
nonlinearly instability.  Here, linear instability is expressed in terms of a spectral condition, while
nonlinear instability is understood in the sense of Lyapunov (see Definition \ref{def-lyaponov} below).
Results of this type for ODEs are classical, and were earlier known for the Navier-Stokes
equations (due to Yudovich \cite{Yu}).  The class of abstract equations studied in \cite{FSV} includes
the two-dimensional Euler equation.

When the results of \cite{FSV} are applied to the critical SQG equation, they establish the linear
instability--nonlinear instability implication with respect to $H^s(\mathbb{T}^2)$ norms with $s>2$.
Noting this, in \cite{FPV}, Friedlander, Pavlovi\'c, and Vicol studied the question of nonlinear instability
with respect to the $L^2(\mathbb{T}^2)$ norm for this equation.  Their argument is based on semigroup
estimates for the linearized operator and a bootstrap technique.  To close the bootstrap estimates, they establish
a global Lipschitz bound for solutions to the forced equation (the forcing term is added to permit the existence of
linearly-unstable steady state solutions).

In this paper, we establish a related class of nonlinear instability properties for the supercritical surface
quasi-geostrophic equation.  As in \cite{FPV}, we include forcing terms, which lead to the existence of
linearly-unstable steady state solutions.  We obtain two sets of results.  The first corresponds to dissipation
given by the fractional Laplacian $\Lambda^\alpha$, $\Lambda=(-\Delta)^{1/2}$, in the supercritical range $0<\alpha<1$,
for which we establish a global perturbation estimate which allows us to use a bootstrap argument to control
higher-order norms.

In our second class of results, we establish nonlinear instability results measured in norms of lower
regularity, for which stronger global information about the evolution is required.  We treat the case of
logarithmically-supercritical dissipation, and use the fact that the equation is only slightly
supercritical to establish a robust global well-posedness theory for it.  In particular, we obtain
uniform-in-time $H^k$ control over solutions for $k\geq 0$.  This global result allows
us to prove a stronger nonlinear instability result: we show that linear instability of the steady state
implies its nonlinear instability with respect to the $L^2(\mathbb{T}^2)$ norm.

We now give the precise statement of our results, beginning with the SQG equation associated to the
supercritical fractional Laplacian.  Let $R^\perp$ denote the rotated vector-valued Riesz transform, i.e.
$R^\perp\theta=(R_2\theta,-R_1\theta)$, where $R_i=\partial_i(-\Delta)^{-1/2}$.  Fix $0<\gamma<1$, and
suppose that $f\in H^{2-\gamma}(\mathbb{T}^2)$ satisfies the mean-zero condition
\begin{align}
\int fdx=0.\label{eq-mean-zero}
\end{align}

We consider the forced SQG equation with supercritical dissipation of order $\gamma$:
\begin{align}
\partial_t\theta+R^\perp\theta\cdot\nabla\theta+\Lambda^\gamma\theta=f,\label{eq1-supercritical}
\end{align}
with the initial condition $\theta(0,\cdot)=\theta_0(\cdot)$.  Suppose that $\Theta_0$ solves the stationary
equation
\begin{align}
R^\perp \Theta_0\cdot \nabla \Theta_0+\Lambda^\gamma\Theta_0=f.\label{eq-steady-supercritical}
\end{align}
Letting $\tth=\tth(t,x)$ be an unknown perturbation and writing $\theta(t,x)=\Theta_0(x)+\tth(t,x)$, the
perturbed function $\theta$ solves (\ref{eq1-supercritical}) if and only if $\tth$ solves
\begin{align}
\partial_t\tth=L_\gamma\tth+N(\tth),\label{eq-perturbation-supercritical}
\end{align}
with
\begin{align}
L_\gamma\tth=-(R^\perp\Theta_0)\cdot \nabla\tth-(R^\perp\tth)\cdot\nabla\Theta_0-\Lambda^\gamma\tth\label{linearized-supercritical}
\end{align}
and
\begin{align}
N(\tth)=-(R^\perp\tth)\cdot\nabla\tth.\label{def-n}
\end{align}

\begin{definition}[Lyapunov stability]
\label{def-lyaponov}
Let $X$ and $Z$ be two Banach spaces.  We say that a solution $\Theta_0$ to (\ref{eq-steady-supercritical}) is
an $(X,Z)$-nonlinearly stable steady-state solution to (\ref{eq1-supercritical}) if for any $\varepsilon_0>0$ there
exists $\varepsilon_1=\varepsilon_1(\varepsilon_0)>0$ such that for all $\tth_0\in X$ satisfying
$\lVert \tth_0\rVert_{Z}<\varepsilon_1$ there exists a global solution $$\tth\in C([0,\infty);X)$$ to
(\ref{eq-perturbation-supercritical}) with $\tth(0)=\tth_0$ and
\begin{align*}
\sup_{t>0}\,\, \lVert \tth(t)\rVert_{Z}<\varepsilon_0.
\end{align*}
\end{definition}

We say that a solution $\Theta_0$ to (\ref{eq-steady-supercritical}) is a $(X,Z)$-nonlinearly unstable steady-state
solution to (\ref{eq1-supercritical}) if it is not $(X,Z)$-nonlinearly stable.  Our first main result shows that
instability of the linearized operator $L_\gamma$ based at $\Theta_0$ implies nonlinear instability of $\Theta_0$.

\begin{theorem}
\label{thm1}
Let $\gamma\in (0,1)$ be given and fix $f\in H^{3}(\mathbb{T}^2)$.  Suppose that $\Theta_0\in H^{2-\gamma}(\bT^2)$ is a solution to
the steady-state supercritical SQG equation (\ref{eq-steady-supercritical}) which is linearly unstable in the sense
that the linearized operator $L_\gamma$ defined by (\ref{linearized-supercritical}) has an eigenvalue $\lambda$
with $\iRe\lambda>0$.  Then $\Theta_0$ is $(H^{2-\gamma},H^{2-\gamma})$-nonlinearly unstable.
\end{theorem}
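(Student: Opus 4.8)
The plan is to follow the abstract framework of Friedlander--Strauss--Vishik, adapting it to the supercritical SQG equation with forcing. The strategy is by contradiction: suppose $\Theta_0$ is $(H^{2-\gamma},H^{2-\gamma})$-nonlinearly stable, and derive a contradiction with the existence of the unstable eigenvalue $\lambda$ of $L_\gamma$. First I would record the basic linear theory: since $L_\gamma$ is a bounded perturbation of $-\Lambda^\gamma$ (the transport terms $-(R^\perp\Theta_0)\cdot\nabla$ and $-(R^\perp\tilde\theta)\cdot\nabla\Theta_0$ are lower order, using $\Theta_0\in H^{2-\gamma}$ to bound coefficients and their derivatives), $L_\gamma$ generates a strongly continuous semigroup $e^{tL_\gamma}$ on $H^{2-\gamma}(\mathbb{T}^2)$. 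By the spectral mapping property for such operators (the semigroup is eventually compact perturbation plus analytic part, or one argues via the resolvent), the spectral bound equals the growth bound, so there exists $\mu$ with $0<\mu<\iRe\lambda$ and a constant $M\geq 1$ such that $\lVert e^{tL_\gamma}\rVert_{H^{2-\gamma}\to H^{2-\gamma}}\leq Me^{\mu t}$ is \emph{false} in the sense that one actually has exponential growth: more precisely there is $v_0\in H^{2-\gamma}$ with $\lVert e^{tL_\gamma}v_0\rVert_{H^{2-\gamma}}\geq ce^{\mu t}\lVert v_0\rVert_{H^{2-\gamma}}$ for some $c>0$ and all $t\geq 0$.

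Next I would set up the nonlinear bootstrap. Fix $\rho>0$ small. Using local well-posedness of \eqref{eq-perturbation-supercritical} in $H^{2-\gamma}$ (which follows from standard energy/commutator estimates for the dissipative SQG equation, with the dissipation $\Lambda^\gamma$ helping), for initial data $\tilde\theta_0=\delta v_0$ with $\delta>0$ small, Duhamel's formula gives
\begin{align*}
\tilde\theta(t)=\delta\, e^{tL_\gamma}v_0+\int_0^t e^{(t-s)L_\gamma}N(\tilde\theta(s))\,ds.
\end{align*}
The nonlinearity satisfies $\lVert N(\tilde\theta)\rVert_{H^{2-\gamma}}=\lVert (R^\perp\tilde\theta)\cdot\nabla\tilde\theta\rVert_{H^{2-\gamma}}\lesssim \lVert\tilde\theta\rVert_{H^{2-\gamma}}^2$ since $H^{2-\gamma}(\mathbb{T}^2)$ embeds into a space closed under the relevant products and $R^\perp$ is bounded on it (here $2-\gamma>1$ is what makes $H^{2-\gamma}$ an algebra-type space adapted to the derivative loss in $N$). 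Then the standard Friedlander--Strauss--Vishik argument applies: let $T_\delta$ be the first time $\lVert\tilde\theta(t)\rVert_{H^{2-\gamma}}$ reaches a fixed threshold $\theta_*$ (independent of $\delta$), which is finite by the assumed instability-free hypothesis being contradicted; for $t\leq T_\delta$ one bootstraps $\lVert\tilde\theta(t)\rVert_{H^{2-\gamma}}\lesssim \delta e^{\mu t}$ from the Duhamel formula, and then evaluating the lower bound on the linear part at $t=T_\delta$ against the quadratic upper bound on the Duhamel integral forces $\theta_* \lesssim \delta e^{\mu T_\delta}\sim$ a fixed positive quantity independent of $\delta$; letting $\delta\to 0$ one sees $T_\delta\to\infty$ but $\lVert\tilde\theta(T_\delta)\rVert_{H^{2-\gamma}}=\theta_*\not\to 0$, contradicting stability.

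I expect the main obstacle to be two technical points that must be handled carefully in the supercritical setting. First, the local well-posedness and the a priori control needed to run the bootstrap up to time $T_\delta$: since the equation is supercritical, one does not have global-in-time control for free, so I would only claim local well-posedness in $H^{2-\gamma}$ on a time interval depending on $\lVert\tilde\theta_0\rVert_{H^{2-\gamma}}$ and then use the bootstrap bound itself (smallness of the $H^{2-\gamma}$ norm up to $T_\delta$) to continue the solution — a continuous-induction argument that must be arranged so that the threshold $\theta_*$ is chosen small enough for the local theory and the product estimate constants to be usable. Second, the spectral mapping / exponential-growth step for $L_\gamma$: one must justify that the unstable eigenvalue of the generator produces genuine exponential growth of the semigroup in the $H^{2-\gamma}$ topology, which I would obtain by writing $L_\gamma=-\Lambda^\gamma+B$ with $B$ relatively compact (or relatively bounded with small bound after a resolvent shift) and invoking the fact that $-\Lambda^\gamma$ generates an analytic semigroup, so that the essential spectrum is unmoved and lies in a left half-plane, whence the part of the spectrum with $\iRe\geq \mu$ consists of finitely many eigenvalues and a spectral-projection/Riesz-projection decomposition yields the growth bound. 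The algebra property $\lVert fg\rVert_{H^{2-\gamma}}\lesssim\lVert f\rVert_{H^{2-\gamma}}\lVert g\rVert_{H^{2-\gamma}}$ and boundedness of $R^\perp$ on $H^{2-\gamma}$ use $2-\gamma>1$ in two dimensions; I would state these as lemmas and cite standard references.
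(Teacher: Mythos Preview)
Your proposal has a genuine gap at the nonlinear estimate. You claim
\[
\lVert N(\tilde\theta)\rVert_{H^{2-\gamma}}=\lVert (R^\perp\tilde\theta)\cdot\nabla\tilde\theta\rVert_{H^{2-\gamma}}\lesssim \lVert\tilde\theta\rVert_{H^{2-\gamma}}^2,
\]
but this is false: $N$ loses one derivative, so $N:H^{2-\gamma}\to H^{1-\gamma}$ only. The Kato--Ponce product estimate gives $\lVert fg\rVert_{H^{2-\gamma}}\lesssim \lVert f\rVert_{H^{2-\gamma}}\lVert g\rVert_{L^\infty}+\lVert f\rVert_{L^\infty}\lVert g\rVert_{H^{2-\gamma}}$, and with $g=\nabla\tilde\theta$ both terms require more than $\tilde\theta\in H^{2-\gamma}$ (either $\nabla\tilde\theta\in L^\infty$, which needs $2-\gamma>2$, or $\tilde\theta\in H^{3-\gamma}$). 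The algebra property of $H^{2-\gamma}$ for $2-\gamma>1$ does not help, because the product in $N$ is not between two $H^{2-\gamma}$ functions. Moreover, you cannot recover the missing derivative from the semigroup: the dissipation is $\Lambda^\gamma$ with $\gamma<1$, so $e^{tL_\gamma}:H^{1-\gamma}\to H^{2-\gamma}$ costs a factor $t^{-1/\gamma}$, which is non-integrable at $t=0$. This is exactly the supercritical obstruction, and it blocks the direct FSV-style bootstrap in $H^{2-\gamma}$.

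The paper's proof avoids this obstruction by a genuinely different mechanism. First, it proves a perturbative smoothing estimate (Proposition~\ref{prop2}): if $\sup_t\lVert\tilde\theta(t)\rVert_{H^{2-\gamma}}\le C_0$ is small, then in fact $\lVert\tilde\theta(t)\rVert_{H^{2-2\gamma/3}}\le C(1+t^{-1/3})$, i.e.\ one gains $\gamma/3$ extra derivatives instantaneously. Second, it runs the Duhamel bootstrap in $L^2$ rather than $H^{2-\gamma}$, using a semigroup smoothing estimate (Lemma~\ref{lemma-ldelta-supercritical}) of the form $\lVert e^{tL_{\gamma,\delta}}\varphi\rVert_{L^2}\le Ct^{-\sigma}\lVert\varphi\rVert_{L^2}^{1-\sigma}\lVert\Lambda^{-\gamma}\varphi\rVert_{L^2}^\sigma$, and then interpolates $\lVert N(\tilde\theta)\rVert_{L^2}^{1-\sigma}\lVert\Lambda^{-\gamma}N(\tilde\theta)\rVert_{L^2}^\sigma\lesssim \lVert\tilde\theta\rVert_{L^2}^{1+\beta}\lVert\tilde\theta\rVert_{H^{2-2\gamma/3}}^{1-\beta}$ for some $\beta>0$. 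The a~priori $H^{2-2\gamma/3}$ bound from Proposition~\ref{prop2} is what makes the right-hand side superlinear in $\lVert\tilde\theta\rVert_{L^2}$ and closes the argument. Your outline is missing both of these ingredients; without them the derivative-counting does not close in the supercritical range $\gamma<1$.
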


As we described above, to establish instability results below the critical local well-posedness regularity threshold,
we require stronger global information on the evolution.  Toward this end, we fix $0<a<1/2$, $\tau\in (1,5/3)$, and
consider the forced logarithmically supercritical SQG with forcing function
$f\in H^{\tau}(\mathbb{T}^2)$ satisfying the mean-zero condition (\ref{eq-mean-zero}):
\begin{align}
\partial_t\theta+R^\perp\theta\cdot\nabla\theta+\mathcal{L}\theta=f,\label{eq1-log}
\end{align}
where $\mathcal{L}$ is a logarithmically supercritical diffusion operator given as a Fourier multiplier by
\begin{align}
\widehat{\mathcal{L}\theta}(\xi)=\frac{|\xi|\widehat{\theta}(\xi)}{\log^a(\kappa+|\xi|)}\label{eq-L}
\end{align}
for $\xi\in\mathbb{R}^2$, where $\kappa\ge e$ is a constant.

In certain cases, global well-posedness results for the critical SQG can be extended to slightly supercritical
settings (see, e.g. \cite{DKSV,XZ}).  This is the motivation behind our interest in
the logarithmically supercritical dissipation $\mathcal{L}$.  In Section $3$, we refine and develop a method
introduced in \cite{CZV} to establish global well-posedness for the initial value problem associated to
(\ref{eq1-log}) with data in $H^\tau(\mathbb{T}^2)$ (see also \cite{CTV} for an application of related ideas
to the forced critical problem).  In particular, we prove the following theorem:

\begin{theorem}
\label{thm-log-gwp}
Let $0<a<1/2$, $0<\varepsilon<1$, and $1<\tau<5/3$ be given.  Suppose that $f\in H^\tau(\mathbb{T}^2)$ satisfies the mean-zero condition (\ref{eq-mean-zero}).  Then for all $\theta_0\in H^\tau(\mathbb{T}^2)$ there exists a global solution
$$
\theta\in C^0([0,\infty);H^\tau(\mathbb{T}^2))\cap L^2_{\textrm{loc}}([0,\infty);H^{\tau+1/2-\varepsilon}(\mathbb{T}^2))
$$
to (\ref{eq1-log}) with $\theta(0)=\theta_0$.  Moreover, for any $k\geq 0$ and $t_0>0$, $$\sup_{t>t_0}\,\, \lVert \theta(t)\rVert_{H^k}<\infty$$
provided that $f\in H^k(\mathbb{T}^2)$.
\end{theorem}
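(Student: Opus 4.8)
The plan is to prove Theorem~\ref{thm-log-gwp} by combining a local well-posedness theory for \eqref{eq1-log} in $H^\tau(\bT^2)$ with a chain of global \emph{a priori} estimates obtained via maximum-principle techniques: $L^2$ and $L^\infty$ bounds, then a uniform H\"older bound obtained by adapting the nonlinear maximum principle of \cite{CZV} to the operator $\mathcal L$ (this is what forces global existence), and finally a bootstrap producing the uniform-in-time $H^k$ control.

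\emph{Local theory.} First I would establish local-in-time well-posedness in $H^\tau$. Since $\mathcal L$ coincides with $\Lambda$ up to the factor $\log^{a}(\kappa+|\xi|)$, the energy identity
\[
\frac{1}{2}\frac{d}{dt}\|\theta\|_{H^\tau}^2+\int_{\bR^2}\frac{|\xi|^{1+2\tau}}{\log^{a}(\kappa+|\xi|)}|\widehat{\theta}(\xi)|^2\,d\xi=-\langle\Lambda^\tau(R^\perp\theta\cdot\nabla\theta),\Lambda^\tau\theta\rangle+\langle\Lambda^\tau f,\Lambda^\tau\theta\rangle,
\]
together with Kato--Ponce commutator estimates and the elementary bound $|\xi|^{1+2\tau}/\log^{a}(\kappa+|\xi|)\gtrsim_\varepsilon|\xi|^{2\tau+1-2\varepsilon}$ for $|\xi|\ge1$ (so that low-regularity losses in the nonlinear term are absorbed into the dissipation after interpolation and Young's inequality, which is where $1<\tau<5/3$ is used), yields a unique solution on a maximal interval $[0,T_\ast)$, the parabolic gain $\theta\in L^2_{\mathrm{loc}}([0,T_\ast);H^{\tau+1/2-\varepsilon})$, and a continuation criterion: the solution extends past $T_\ast$ as soon as $\sup_{t<T_\ast}\|\theta(t)\|_{C^\delta}<\infty$ for some fixed small $\delta>0$ (available precisely because $\mathcal L$ is of essentially full first order, so that H\"older regularity propagates to smoothness). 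Hence global existence reduces to an \emph{a priori} $C^\delta$ bound.

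\emph{A priori bounds.} Next I would record the elementary bounds. Pairing \eqref{eq1-log} with $\theta$ and using that every nonzero Fourier mode on $\bT^2$ satisfies $|\xi|\ge1$ (the zero mode being removed by \eqref{eq-mean-zero}, which is preserved by the flow) gives
\[
\frac{1}{2}\frac{d}{dt}\|\theta\|_{L^2}^2+c_0\|\theta\|_{L^2}^2\le\|f\|_{L^2}\|\theta\|_{L^2},\qquad c_0:=\inf_{r\ge1}\frac{r}{\log^{a}(\kappa+r)}>0,
\]
hence $\|\theta(t)\|_{L^2}\le e^{-c_0t}\|\theta_0\|_{L^2}+c_0^{-1}\|f\|_{L^2}$. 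Evaluating the equation at a maximum point $\bar x$ of $\theta$, where $\nabla\theta(\bar x)=0$, and using the C\'ordoba--C\'ordoba positivity of $\mathcal L$ together with the unit-scale lower bound $\mathcal L\theta(\bar x)\gtrsim\|\theta\|_{L^\infty}$ (a consequence of the mean-zero condition and of the kernel of $\mathcal L$ being bounded below away from the origin), one obtains $\|\theta(t)\|_{L^\infty}\le e^{-ct}\|\theta_0\|_{L^\infty}+c^{-1}\|f\|_{L^\infty}$. The \textbf{main step}, and the main obstacle, is then the uniform H\"older estimate: adapting the nonlinear maximum principle / modulus-of-continuity method of \cite{CZV} (used with forcing in \cite{CTV}) to $\mathcal L$, one controls a quantity such as $\sup_{x,h}|\theta(x+h)-\theta(x)|/|h|^\delta$ and shows that the small-scale part of the dissipation dominates the transport and drift-stretching contributions together with the forcing difference $f(\cdot+h)-f(\cdot)$, so that this quantity remains bounded for all $t\ge0$, with a bound depending on $\|\theta_0\|_{H^\tau}$ and $f$. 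The delicate point is exactly that this nonlinear dissipative lower bound now carries a division by $\log^{a}(\cdots)$, and one needs $0<a<1/2$ — with $1<\tau<5/3$ accommodating the forcing term — for the gain to still beat the logarithmically-growing loss coming from the drift.

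\emph{Bootstrap.} The uniform $C^\delta$ bound together with the continuation criterion yields the global solution $\theta\in C([0,\infty);H^\tau)\cap L^2_{\mathrm{loc}}([0,\infty);H^{\tau+1/2-\varepsilon})$. Since then $R^\perp\theta$ is uniformly $C^\delta$, a Schauder-type / De~Giorgi estimate for the linear drift--diffusion equation with dissipation $\mathcal L$ (or a further nonlinear-maximum-principle iteration) upgrades $\theta$ to $C^{1,\delta}$ uniformly for $t\ge t_0$, so that $\nabla R^\perp\theta\in L^\infty$ uniformly. Then, for $k\ge0$ with $f\in H^k$, the $H^k$ energy identity (the first display with $\tau$ replaced by $k$), using $\|R^\perp\theta\|_{\dot H^k}\lesssim\|\theta\|_{\dot H^k}$, Kato--Ponce, and the uniform bounds on $\nabla R^\perp\theta$ and $\nabla\theta$, gives
\[
\frac{1}{2}\frac{d}{dt}\|\theta\|_{\dot H^k}^2+c(\varepsilon)\|\theta\|_{\dot H^{k+1/2-\varepsilon}}^2\le C_1\|\theta\|_{\dot H^k}^2+\|f\|_{\dot H^k}\|\theta\|_{\dot H^k}
\]
with $C_1$ independent of $t\ge t_0$; interpolating $\|\theta\|_{\dot H^k}\le\|\theta\|_{\dot H^{k+1/2-\varepsilon}}^{\vartheta}\|\theta\|_{L^2}^{1-\vartheta}$ with $\vartheta=k/(k+\tfrac{1}{2}-\varepsilon)<1$ and using Young's inequality absorbs the right-hand side into $\tfrac{c(\varepsilon)}{2}\|\theta\|_{\dot H^{k+1/2-\varepsilon}}^2$ plus a constant depending only on $C_1$, $\sup_{t\ge0}\|\theta\|_{L^2}$ and $\|f\|_{\dot H^k}$; since $\|\theta\|_{\dot H^{k+1/2-\varepsilon}}^2\ge\|\theta\|_{\dot H^k}^2$ on $\bT^2$ (taking $\varepsilon\le\tfrac{1}{2}$), Gr\"onwall then gives $\sup_{t>t_0}\|\theta(t)\|_{H^k}<\infty$. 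Iterating in $k$, gaining $\tfrac{1}{2}-\varepsilon$ derivatives per step via the parabolic smoothing so that each of these estimates is legitimate, completes the proof for all $k\ge0$, the case $k=0$ being the $L^2$ bound above.
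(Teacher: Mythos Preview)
Your overall architecture---local theory, then $L^\infty$, then the nonlinear-maximum-principle H\"older bound, then bootstrap to $H^k$---matches the paper exactly, and your identification of the H\"older step as the crux is correct. However, your description of that step as directly propagating $\sup_{x,h}|\theta(x+h)-\theta(x)|/|h|^\delta$ ``for all $t\ge 0$'' misses the mechanism the paper (following \cite{CZV}) actually uses. The paper does not propagate the bare $C^\alpha$ seminorm from $t=0$; instead it tracks the regularized quantity
\[
v(t,x;h)=\frac{\theta(t,x+h)-\theta(t,x)}{(\xi(t)^2+|h|^2)^{\alpha/2}},
\]
where $\xi:[0,\infty)\to[0,\xi_0]$ is a decreasing function solving an explicit ODE and vanishing at a finite time $T_*=T_*(\alpha,\xi_0)$. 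At $t=0$ this quantity is controlled by $\|\theta_0\|_{L^\infty}/\xi_0^\alpha$ alone (no H\"older input needed), while for $t\ge T_*$ it equals the genuine $C^\alpha$ seminorm. The term $\xi'\xi/(\xi^2+|h|^2)$ appears on the right of the evolution of $v^2$ at its maximum, and the ODE for $\xi$ is chosen so that this term is dominated by the nonlinear dissipative lower bound despite the logarithmic loss; this is precisely where $a<1/2$ enters, since one needs $\alpha^{2-a(3-2\alpha)/(1-\alpha)}\to 0$ and $\alpha^{1-a(2-\alpha)/(1-\alpha)}\to 0$ as $\alpha\to 0$. One then shrinks $\alpha$ so that $T_*\le T/2$, where $T$ is the local existence time, and the $C^\alpha$ bound on $[T_*,\infty)$ together with the continuation criterion closes the loop. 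Your sketch is not wrong in spirit, but the quantity you wrote and the phrase ``remains bounded for all $t\ge 0$'' do not capture this eventual-regularization structure, which is the actual content of the argument and the reason the logarithmic supercriticality can be handled for arbitrary data. A minor point: the restriction $1<\tau<5/3$ is used only in the local theory; the H\"older step needs only $f\in L^\infty$.
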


As we mentioned above, the proof of Theorem $\ref{thm-log-gwp}$ is based on the nonlinear lower bound method of
\cite{CC,CV,CTV,CZV}, adapted to the logarithmically supercritical setting.  Our arguments are most closely related
to the conditional supercritical result in \cite{CZV}, where the level of supercriticality depends on the size
of the data.  The method is based on pairing a quantitative version of ``eventual regularization'' results in
the spirit of \cite{Sil,Dab} with a suitable form of the local theory for existence of strong solutions.  In our
setting, the logarithmic supercriticality allows to close the gap between these two sides and establish a full
global wellposedness result.  To the best of our knowledge, this is the first case in which these methods are
applied to establish global results in the supercritical regime for arbitrarily large data,
and we believe that our techniques are of substantial independent interest.

With the statement of the global results in hand, we now state our nonlinear instability result for this equation.  Let $\Theta_0$ be a solution to the stationary equation
\begin{align}
R^\perp \Theta_0\cdot \nabla \Theta_0+\mathcal{L}\Theta_0=f.\label{eq-steady-log}
\end{align}
In analogy with (\ref{eq-perturbation-supercritical}), $\theta=\Theta_0+\tth$ solves (\ref{eq1-log}) if and only if the perturbation $\tth$ solves
\begin{align}
\partial_t\tth=L_{\textrm{log}}\tth+N(\tth),\label{eq-perturbation}
\end{align}
with
\begin{align}
L_{\textrm{log}}\tth=-(R^\perp\Theta_0)\cdot \nabla\tth-(R^\perp\tth)\cdot\nabla\Theta_0-\mathcal{L}\tth\label{linearized-log}
\end{align}
and where $N(\tth)$ is as defined in (\ref{def-n}).

\begin{theorem}
\label{thm2}
Fix $f\in H^3(\mathbb{T}^2)$.  Suppose that $\Theta_0\in H^\tau(\mathbb{T}^2)$ is a solution to the steady-state logarithmically supercritical SQG equation (\ref{eq-steady-log}) such that the linearized operator $L_{\textrm{log}}$ defined by (\ref{linearized-log}) has an eigenvalue $\lambda$ with $\iRe\lambda>0$.  Then $\Theta_0$ is $(H^{\tau},L^{2})$-nonlinearly unstable.
\end{theorem}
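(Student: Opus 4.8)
The plan is to run the bootstrap scheme of \cite{FPV}, with the uniform $H^k$ control furnished by Theorem~\ref{thm-log-gwp} replacing the global Lipschitz bound available in the critical case. We may assume $\Theta_0$ has zero mean (subtracting a constant affects neither \eqref{eq-steady-log} nor \eqref{linearized-log}) and work throughout on the mean-zero subspace of $L^2(\bT^2)$, which is invariant under \eqref{eq1-log} and under $e^{tL_{\textrm{log}}}$. The first step is to convert the spectral hypothesis into quantitative linear growth. Writing $L_{\textrm{log}}=-\mathcal{L}-(R^\perp\Theta_0)\cdot\nabla-(R^\perp\cdot)\cdot\nabla\Theta_0$, the last two terms form a divergence-free transport operator (skew-symmetric modulo order zero) plus a zeroth-order operator bounded by $\|\nabla\Theta_0\|_{L^\infty}$, so a resolvent analysis along the lines of \cite{FSV,FPV}, adapted to the multiplier $\mathcal{L}$, shows that $\operatorname{spec}(L_{\textrm{log}})\cap\{\iRe z>0\}$ consists of finitely many eigenvalues of finite multiplicity; hence $\mu_0:=\sup\{\iRe z:z\in\operatorname{spec}(L_{\textrm{log}})\}$ is attained, $\mu_0\ge\iRe\lambda>0$ by hypothesis, and $\|e^{tL_{\textrm{log}}}\|_{L^2\to L^2}\le Ke^{\mu_0 t}$ for all $t\ge0$. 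From an eigenfunction for an eigenvalue of real part $\mu_0$ (passing to its real and imaginary parts if it is not real) one obtains $w_0$ with $\int w_0\,dx=0$, $\|w_0\|_{L^2}=1$, and $\|e^{tL_{\textrm{log}}}w_0\|_{L^2}\ge K^{-1}e^{\mu_0 t}$ for $t\ge0$. Since $\Theta_0\in H^\tau$ is a steady solution of \eqref{eq1-log} and $f\in H^3$, Theorem~\ref{thm-log-gwp} gives $\Theta_0\in H^3(\bT^2)$; consequently the advection--diffusion semigroup $e^{tL_{\textrm{log}}}$, whose drift $R^\perp\Theta_0\in H^3$ is divergence-free, smooths $L^2$ into $H^\tau$ for $t>0$, so the eigenfunction $w_0$, a priori only in $L^2$, in fact lies in $H^\tau(\bT^2)$.

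For small $\delta>0$ set $\tth_0=\delta w_0$, so that $\theta_0:=\Theta_0+\tth_0\in H^\tau$ with $\|\tth_0\|_{L^2}=\delta$. By Theorem~\ref{thm-log-gwp} there is a global solution $\theta\in C([0,\infty);H^\tau)$ of \eqref{eq1-log} with $\theta(0)=\theta_0$, and $\tth:=\theta-\Theta_0\in C([0,\infty);L^2)$ solves \eqref{eq-perturbation} and obeys the Duhamel formula
\[
\tth(t)=e^{tL_{\textrm{log}}}\tth_0+\int_0^t e^{(t-s)L_{\textrm{log}}}N(\tth(s))\,ds.
\]
Fix $t_0=1$. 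Theorem~\ref{thm-log-gwp} (with $f\in H^3$) gives $\sup_{s>t_0}\|\theta(s)\|_{H^3}<\infty$, and since $\Theta_0\in H^3$ also $M:=\sup_{s>t_0}\|\tth(s)\|_{H^3}<\infty$, uniformly for $\delta\in(0,1)$ because the eventual $H^3$ bound depends only on $\|f\|_{H^3}$ and on $\|\theta_0\|_{H^\tau}$, which stays bounded as $\delta\to0$; moreover continuous dependence on the data in $H^\tau$ on the compact interval $[0,t_0]$ gives $\|\tth(s)\|_{H^\tau}\le C\delta$ for $s\in[0,t_0]$. Fix $\varepsilon\in(0,\min\{1/2,\tau-1\})$ and put $\sigma=(1+\varepsilon)/3<1/2$. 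Since $R^\perp$ is bounded on $H^{1+\varepsilon}(\bT^2)\hookrightarrow L^\infty(\bT^2)$ and $N(\tth)=-(R^\perp\tth)\cdot\nabla\tth$, one has $\|N(\tth)\|_{L^2}\lesssim\|\tth\|_{H^{1+\varepsilon}}^2$, and interpolating $H^{1+\varepsilon}$ between $L^2$ and $H^3$ yields the crucial superlinear bound $\|N(\tth)\|_{L^2}\lesssim\|\tth\|_{L^2}^{2(1-\sigma)}\|\tth\|_{H^3}^{2\sigma}$ with exponent $2(1-\sigma)>1$. In particular $\|N(\tth(s))\|_{L^2}\lesssim M^{2\sigma}\|\tth(s)\|_{L^2}^{2(1-\sigma)}$ for $s>t_0$, while $\|N(\tth(s))\|_{L^2}\lesssim\delta^2$ for $s\le t_0$.

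Now fix a small constant $\eta_0>0$, depending only on $K,M,\mu_0,\sigma$ and the implied constants above, and define $T_\delta$ by $\delta e^{\mu_0 T_\delta}=\eta_0$, so $T_\delta\to\infty$ as $\delta\to0$. On $[0,T_\delta]$ I would run a standard continuity argument with the bootstrap hypothesis $\|\tth(s)\|_{L^2}\le 2K\delta e^{\mu_0 s}$: substituting the bounds above into the Duhamel formula and using $\delta e^{\mu_0 s}\le\eta_0$ throughout $[0,T_\delta]$, the nonlinear term is at most $A(\delta+\eta_0^{1-2\sigma})\,\delta e^{\mu_0 t}$ for a fixed constant $A$; choosing first $\eta_0$ and then $\delta$ small enough makes this $<\tfrac12 K^{-1}\delta e^{\mu_0 t}$, which both closes the bootstrap (the linear term being $\le K\delta e^{\mu_0 t}$) and, combined with $\|e^{tL_{\textrm{log}}}\tth_0\|_{L^2}\ge K^{-1}\delta e^{\mu_0 t}$, yields $\|\tth(t)\|_{L^2}\ge\tfrac12 K^{-1}\delta e^{\mu_0 t}$ on $[0,T_\delta]$. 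Evaluating at $t=T_\delta$ gives $\|\tth(T_\delta)\|_{L^2}\ge\tfrac12 K^{-1}\eta_0=:\varepsilon_*$, a positive number independent of $\delta$. Hence for every sufficiently small $\delta$ the global solution issuing from $\tth_0=\delta w_0$ satisfies $\|\tth_0\|_{L^2}=\delta$ yet $\sup_{t\ge0}\|\tth(t)\|_{L^2}\ge\varepsilon_*$; letting $\delta\to0$ contradicts $(H^\tau,L^2)$-stability with $\varepsilon_0=\varepsilon_*$, and therefore $\Theta_0$ is $(H^\tau,L^2)$-nonlinearly unstable.

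I expect the linear estimate in the first step to be the main obstacle: because $\mathcal{L}$ is of order essentially one while the transport term $(R^\perp\Theta_0)\cdot\nabla$ is genuinely first order --- the dissipation being in fact logarithmically weaker --- $L_{\textrm{log}}$ is not a small relatively bounded perturbation of $-\mathcal{L}$, so establishing the semigroup bound with the spectral exponent $\mu_0$ (in place of the crude exponent $\|\nabla\Theta_0\|_{L^\infty}$) requires a genuine resolvent analysis in the spirit of \cite{FSV,FPV}, together with the regularity $\Theta_0\in H^3$ supplied by Theorem~\ref{thm-log-gwp}. A second essential point is that the Duhamel error must be forced to grow strictly slower than the linear mode, which is precisely why one needs the superlinear control $\|N(\tth)\|_{L^2}\lesssim\|\tth\|_{L^2}^{2(1-\sigma)}\|\tth\|_{H^3}^{2\sigma}$ with $2(1-\sigma)>1$, and hence the uniform $H^3$ bound provided by Theorem~\ref{thm-log-gwp}.
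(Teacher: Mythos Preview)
Your approach is correct and follows the same overall bootstrap/Duhamel framework as the paper, but you obtain the crucial superlinear control on $N(\tth)$ by a genuinely different and somewhat more direct route. The paper proves a semigroup \emph{smoothing} estimate (Lemma~\ref{lemma-ldelta-log}),
\[
\lVert e^{tL_{\textrm{log},\delta}}\varphi\rVert_{L^2}\le Ct^{-\sigma}\lVert\varphi\rVert_{L^2}^{1-\sigma}\lVert\mathcal{L}^{-1}\varphi\rVert_{L^2}^{\sigma},
\]
and then combines it with bilinear/Gagliardo--Nirenberg estimates for $\lVert\mathcal{L}^{-1}(\tu\cdot\nabla\tth)\rVert_{L^2}$ to produce $\lVert\tth\rVert_{L^2}^{1+\beta}\lVert\tth\rVert_{\dot H^{5/2}}^{1-\beta}$ inside the Duhamel integral. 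You instead use only the bare $L^2\to L^2$ semigroup bound and get the extra $L^2$ exponent by interpolating $\lVert N(\tth)\rVert_{L^2}\lesssim\lVert\tth\rVert_{H^{1+\varepsilon}}^2\lesssim\lVert\tth\rVert_{L^2}^{2(1-\sigma)}\lVert\tth\rVert_{H^3}^{2\sigma}$ with $\sigma=(1+\varepsilon)/3<1/2$. This is cleaner in the log-supercritical setting precisely because Theorem~\ref{thm-log-gwp} supplies uniform $H^3$ (indeed, arbitrary $H^k$) control; note, however, that your shortcut would \emph{not} close for Theorem~\ref{thm1}, since there the only global bound available is $H^{2-2\gamma/3}$, and interpolating $H^{1+\varepsilon}$ against that space cannot give an $L^2$ exponent exceeding $1$---which is exactly why the paper develops Lemma~\ref{lemma-ldelta-supercritical}/\ref{lemma-ldelta-log} as a unified tool for both theorems.

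On the linear side, your identification of the sharp semigroup bound $\lVert e^{tL_{\textrm{log}}}\rVert_{L^2\to L^2}\le Ke^{\mu_0 t}$ as the delicate point is accurate. The paper's route to this is again Lemma~\ref{lemma-ldelta-log}: the case $\sigma=0$ gives boundedness of $e^{tL_{\textrm{log},\delta}}$, hence $\lVert e^{tL_{\textrm{log}}}\rVert\le Ce^{(\lambda+\delta)t}$, and the heart of the lemma is the resolvent-type estimate \eqref{eq2-log}, proved by the Fourier triangle argument. So the ``genuine resolvent analysis'' you anticipate is precisely what the paper supplies in Lemma~\ref{lemma-ldelta-log}; if you adopt that lemma (even only its $\sigma=0$ consequence) your argument goes through without further obstruction. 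Your handling of eigenfunction regularity, the uniform-in-$\delta$ $H^3$ bound, and the split of the Duhamel integral at $t_0$ all match what the paper does.
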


We conclude this introduction with a few comments on notation, and an outline of the rest of the paper.  In the rest of this paper, we use $A\lesssim B$ (or $A\gtrsim B$) to denote the condition that $A\leq C B$ (or $A\geq C B$) holds for some $C>0$.  The constants $C$ (and likewise $C_1$, $C_2$, etc.) may change from line to line unless otherwise indicated.  In Section $1$, we recall statements of the local theory associated to the supercritical SQG (\ref{eq1-supercritical}) and the logarithmically supercritical SQG (\ref{eq1-log}), and establish a preliminary global perturbation bound in the case of supercritical dissipation.  In Section $2$, we use these results to give the proof of Theorem $\ref{thm1}$, our nonlinear instability theorem for the supercritical equation (\ref{eq1-supercritical}).  In Section $3$ we prove Theorem $\ref{thm-log-gwp}$, the global well-posedness results for the logarithmically supercritical SQG, equation (\ref{eq1-log}).  The paper concludes with Section $4$, where we give the proof of Theorem $\ref{thm2}$.

\section{Local well-posedness and global perturbation for supercritical SQG}

We begin with some preliminaries, recalling statements of the local theory for the supercritical SQG (\ref{eq1-supercritical}) and the logarithmically supercritical SQG (\ref{eq1-log})--(\ref{eq-L}).

\begin{proposition}[Local well-posedness for supercritical SQG]
\label{prop-local-supercritical}
Fix $0<\gamma<1$ and let $f\in H^{2-\gamma}(\mathbb{T}^2)$ be given.  For each $\theta_0\in H^{2-\gamma}$, there exists $T=T(\theta_0,f)>0$ such that the initial value problem (\ref{eq1-supercritical}) has a unique local-in-time solution $\theta\in C^0([0,T);H^{2-\gamma}(\mathbb{T}^2))\cap L^2([0,T];H^{2-\gamma/2}(\mathbb{T}^2))$.
\end{proposition}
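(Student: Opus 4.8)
The plan is to solve \eqref{eq1-supercritical} through its mild (Duhamel) formulation. Writing the nonlinearity in divergence form, using that $R^\perp\theta$ is divergence free, a solution should satisfy
\[
\theta(t)=e^{-t\Lambda^\gamma}\theta_0+\int_0^t e^{-(t-s)\Lambda^\gamma}\bigl(f-\nabla\cdot(R^\perp\theta\,\theta)\bigr)(s)\,ds.
\]
I would construct $\theta$ in the parabolic space $X_T=C^0([0,T];H^{2-\gamma})\cap L^2([0,T];H^{2-\gamma/2})$ (it is convenient to additionally track an $\ell^1$-Besov, Chemin--Lerner type, refinement at the top regularity), exploiting the two smoothing properties of the fractional-heat semigroup: $\lVert e^{-t\Lambda^\gamma}g\rVert_{H^{\sigma+\beta}}\lesssim t^{-\beta/\gamma}\lVert g\rVert_{H^\sigma}$ for $\beta\ge0$, and the maximal-regularity bound $\bigl\lVert\int_0^\cdot e^{-(\cdot-s)\Lambda^\gamma}g(s)\,ds\bigr\rVert_{L^2_TH^{\sigma+\gamma/2}}\lesssim\lVert g\rVert_{L^2_TH^{\sigma-\gamma/2}}$. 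The feature to keep in mind throughout is that $H^{2-\gamma}(\mathbb{T}^2)$ is exactly the scaling-critical Sobolev space for \eqref{eq1-supercritical}; the method naturally produces an existence time depending on the data $\theta_0,f$ themselves (through the high-frequency tail of the free evolution), not merely on $\lVert\theta_0\rVert_{H^{2-\gamma}}$, which is consistent with the statement. I also record that $H^{2-\gamma}$ is an algebra and embeds in $L^\infty\cap C^{0,1-\gamma}$ since $2-\gamma>1$, which is used for the low-frequency and lower-order terms.

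The first step is to isolate the smallness driving a short-time contraction: by dominated convergence $\lVert e^{-t\Lambda^\gamma}\theta_0\rVert_{L^2([0,T];H^{2-\gamma/2})}\to0$ and the free $f$-contribution to the Duhamel term tends to $0$ as $T\to0^+$, at a rate depending on $\theta_0$ and $f$ (the tail of $\widehat{\theta_0}$). For the nonlinear term I would not use a crude energy estimate but rather work at the level of Littlewood--Paley blocks: testing the localized equation, the transport part drops out by the divergence-free condition, and the commutator $[\Delta_j,R^\perp\theta\cdot\nabla]\theta$ is handled by Bony paraproduct estimates together with the boundedness of $R^\perp$ on $H^s$ and on $L^p$, so that after summation in $j$ the top-order contribution is absorbed into the dissipative gain $\lVert\theta\rVert_{L^2_TH^{2-\gamma/2}}^2$ and the rest is controlled by $\lVert e^{-t\Lambda^\gamma}\theta_0\rVert_{L^2_TH^{2-\gamma/2}}$, $\lVert f\rVert$, and the $X_T$-norm of $v=\theta-e^{-t\Lambda^\gamma}\theta_0$; taking $T=T(\theta_0,f)$ small makes the solution map a contraction on a ball of $X_T$. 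A Galerkin or vanishing-viscosity approximation together with the same a priori bounds and Aubin--Lions compactness gives an alternative construction, and in either case the continuity $\theta\in C^0([0,T);H^{2-\gamma})$ follows from the Duhamel formula (or from the $H^{2-\gamma}$ energy identity and weak continuity).

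For uniqueness, let $\theta_1,\theta_2\in X_T$ solve \eqref{eq1-supercritical} with the same data; both have the same affine-in-time mean, so $w=\theta_1-\theta_2$ is mean zero and solves $\partial_t w+\Lambda^\gamma w=-(R^\perp\theta_1)\cdot\nabla w-(R^\perp w)\cdot\nabla\theta_2$. Testing against $w$, the transport term vanishes; estimating the remaining term by H\"older together with the Sobolev embeddings $H^{\gamma/2}(\mathbb{T}^2)\hookrightarrow L^{4/(2-\gamma)}$ and $H^{1-\gamma/2}(\mathbb{T}^2)\hookrightarrow L^{4/\gamma}$ (the latter giving $\nabla\theta_2\in L^2_TL^{4/\gamma}$ from $\theta_2\in L^2_TH^{2-\gamma/2}$) and the boundedness of $R^\perp$ on $L^{8/(4-\gamma)}$ yields
\[
\tfrac{d}{dt}\lVert w\rVert_{L^2}^2+\lVert w\rVert_{H^{\gamma/2}}^2\le\tfrac12\lVert w\rVert_{H^{\gamma/2}}^2+C\lVert\nabla\theta_2\rVert_{L^{4/\gamma}}^2\lVert w\rVert_{L^2}^2,
\]
and since the coefficient $\lVert\nabla\theta_2(\cdot)\rVert_{L^{4/\gamma}}^2$ lies in $L^1([0,T])$, Gr\"onwall forces $w\equiv0$.

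I expect the main obstacle to be the nonlinear estimate in the second step. Because $2-\gamma$ is scaling-critical, the $\gamma/2$ derivatives gained from the dissipation exactly balance the loss in the commutator, so a plain Sobolev energy estimate leaves no genuine room for large data; in particular the high--high frequency interactions in $R^\perp\theta\cdot\nabla\theta$ are not controlled by the naive product rule in $H^s$. Closing the estimate therefore requires the frequency-by-frequency Bony/commutator analysis with the summation performed in a Chemin--Lerner or $\ell^1$-Besov space, and it is precisely here that one must use both the structure $u=R^\perp\theta$ and the smallness of $e^{-t\Lambda^\gamma}\theta_0$ on the short, data-dependent time interval.
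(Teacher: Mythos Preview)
Your proposal is correct and follows essentially the approach of the references the paper cites. The paper itself does not give a proof of this proposition; it simply states that the proofs of Propositions~\ref{prop-local-supercritical} and~\ref{prop-local} ``are by now relatively standard'' and refers the reader to \cite{J06,M06}. Your outline---mild formulation, semigroup smoothing, Littlewood--Paley localization with commutator/Bony paraproduct estimates to handle the critical nonlinearity, and the observation that at the scaling-critical regularity $H^{2-\gamma}$ the existence time must depend on $\theta_0$ itself rather than its norm---is precisely the strategy of Miura \cite{M06}, so there is nothing substantive to contrast.
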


\begin{proposition}[Local well-posedness for log-supercritical SQG]
\label{prop-local}
Let $0<a<1$, $0<\varepsilon<1$, $1<\tau<5/3$, and $f\in H^{\tau}(\mathbb{T}^2)$ be given.  For each $\theta_0\in H^{\tau}$, there exists $T=T(a,\tau,\varepsilon,\theta_0,f)>0$ such that, for all $\varepsilon>0$, the initial value problem (\ref{eq1-log}) with $\mathcal{L}$ given by (\ref{eq-L}) has a unique local-in-time solution $\theta\in C^0([0,T);H^{\tau}(\mathbb{T}^2))\cap L^2([0,T];H^{\tau+\frac{1}{2}-\varepsilon}(\mathbb{T}^2))$.
\end{proposition}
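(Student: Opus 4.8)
The plan is to run the standard energy method for dissipative active scalars, the only delicate point being the logarithmic defect in $\mathcal{L}$. As a Fourier multiplier $\mathcal{L}$ sits between the fractional Laplacians of orders $1-\delta$ and $1$ for every $\delta>0$: since $\kappa\ge e$ one has $\mathcal{L}\le\Lambda$, while, as $(\log x)^a$ grows slower than any positive power of $x$, for each $\delta>0$ there is $c_\delta>0$ with
\[
\frac{|\xi|}{\log^a(\kappa+|\xi|)}\ \ge\ c_\delta\,|\xi|^{1-\delta}\qquad\text{for all }\xi\in\bZ^2\setminus\{0\}.
\]
Fix $\varepsilon\in(0,1)$ and choose $\delta>0$ with $\delta<\tau-1$ and $\delta\le2\varepsilon$; this is possible since $\tau>1$. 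Because $\int\theta(t,x)\,dx=\int\theta_0\,dx+t\int f\,dx$ is explicit and a constant is annihilated by both $R^\perp$ and $\mathcal{L}$, we may assume $\int\theta_0\,dx=\int f\,dx=0$ and work with homogeneous norms. Thus the problem reduces to an SQG equation whose dissipation dominates $c_\delta\Lambda^{1-\delta}$ while $\tau>2-(1-\delta)$, the classical regime for local well-posedness; the task is to carry out the estimates quantitatively and track the smoothing exponent.

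I would construct solutions by Fourier truncation: with $\mathbb{P}_N$ the projection onto frequencies $|\xi|\le N$, solve the finite-dimensional system $\partial_t\theta_N+\mathbb{P}_N\bigl((R^\perp\theta_N)\cdot\nabla\theta_N\bigr)+\mathcal{L}\theta_N=\mathbb{P}_Nf$, $\theta_N(0)=\mathbb{P}_N\theta_0$. The heart of the argument is an $N$-uniform a priori bound. Pairing with $\Lambda^{2\tau}\theta_N$ --- using that $\mathbb{P}_N$ is self-adjoint and commutes with $\Lambda^{2\tau}$, and that the uncommuted transport term drops out since $\divop(R^\perp\theta_N)=0$ --- gives
\[
\tfrac12\tfrac{d}{dt}\lVert\theta_N\rVert_{\dot H^\tau}^2+\langle\mathcal{L}\theta_N,\Lambda^{2\tau}\theta_N\rangle=-\langle[\Lambda^\tau,(R^\perp\theta_N)\cdot\nabla]\theta_N,\ \Lambda^\tau\theta_N\rangle+\langle f,\Lambda^{2\tau}\theta_N\rangle .
\]
By the displayed lower bound the dissipation term is $\ge c_\delta\lVert\theta_N\rVert_{\dot H^{\tau+(1-\delta)/2}}^2$; for the commutator I would use a Kato--Ponce / Kenig--Ponce--Vega commutator estimate in $L^p$-form, the $L^p$-boundedness of $R^\perp$, and Gagliardo--Nirenberg interpolation on $\bT^2$, to bound it by $\tfrac{c_\delta}{2}\lVert\theta_N\rVert_{\dot H^{\tau+(1-\delta)/2}}^2+P\bigl(\lVert\theta_N\rVert_{H^\tau}\bigr)$ for a fixed polynomial $P$ --- the crucial point being that one can spend strictly fewer than two powers of the smoothing norm, borrowing the deficit from $\lVert\theta_N\rVert_{L^2}$. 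Absorbing the top-order term and applying Gronwall yields $T=T(a,\tau,\varepsilon,\lVert\theta_0\rVert_{H^\tau},\lVert f\rVert_{H^\tau})>0$ and an $N$-uniform bound for $\theta_N$ in $L^\infty([0,T];H^\tau)\cap L^2([0,T];H^{\tau+(1-\delta)/2})$; as $T$ is independent of $N$ each $\theta_N$ extends to $[0,T]$, and $\tau+(1-\delta)/2\ge\tau+\tfrac12-\varepsilon$ gives the claimed smoothing.

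To pass to the limit I would note that $H^\tau(\bT^2)$ is an algebra (as $\tau>1$), so $(R^\perp\theta_N)\cdot\nabla\theta_N=\divop\bigl((R^\perp\theta_N)\theta_N\bigr)$ and hence (using $\mathcal{L}\le\Lambda$) also $\partial_t\theta_N$ are bounded in $L^\infty([0,T];H^{\tau-1})$. The Aubin--Lions--Simon lemma then produces a subsequence converging strongly in $C([0,T];H^{\tau-\eta})$ for every $\eta>0$ and weak-$*$ in $L^\infty([0,T];H^\tau)\cap L^2([0,T];H^{\tau+(1-\delta)/2})$; convergence with more than one derivative identifies the nonlinearity in the limit, giving a solution $\theta$ in the asserted class. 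Weak continuity of $\theta$ into $H^\tau$ is immediate, and I would upgrade it to $C^0([0,T);H^\tau)$ by combining weak lower semicontinuity with the energy inequality at the initial time and parabolic smoothing in the interior --- a routine, if slightly technical, step for dissipative equations, for which the smoothing $\theta\in L^2([0,T];H^{\tau+(1-\delta)/2})$ is the key input.

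Uniqueness is the last step. If $\theta,\bar\theta$ are two such solutions with the same data, $w=\theta-\bar\theta$ solves $\partial_tw+(R^\perp\theta)\cdot\nabla w+(R^\perp w)\cdot\nabla\bar\theta+\mathcal{L}w=0$; pairing with $w$, the term $\langle(R^\perp\theta)\cdot\nabla w,w\rangle$ vanishes, $\langle\mathcal{L}w,w\rangle\ge c_\delta\lVert w\rVert_{\dot H^{(1-\delta)/2}}^2$, and $|\langle(R^\perp w)\cdot\nabla\bar\theta,w\rangle|\lesssim\lVert\nabla\bar\theta\rVert_{L^p}\lVert w\rVert_{L^{2p'}}^2$ by Hölder and the $L^p$-boundedness of $R^\perp$. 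Since $\tau>1$ one has $\nabla\bar\theta\in L^\infty([0,T];H^{\tau-1})\hookrightarrow L^\infty([0,T];L^p)$ for some $p>2/(1-\delta)$ --- the window of admissible $p$ being nonempty precisely because $\delta<\tau-1$ --- and Gagliardo--Nirenberg gives $\lVert w\rVert_{L^{2p'}}^2\lesssim\lVert w\rVert_{L^2}^{2(1-\sigma)}\lVert w\rVert_{\dot H^{(1-\delta)/2}}^{2\sigma}$ with $\sigma<1$; Young's inequality absorbs the $\dot H^{(1-\delta)/2}$ factor into the dissipation, and Gronwall forces $w\equiv0$. Here only $\tau<2$ is actually used (through $H^{\tau-1}\hookrightarrow L^p$); the sharper $\tau<5/3$ in the statement is inherited from the global theory of Section 3 and is irrelevant locally. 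I expect the main obstacle to be the a priori bound: $\mathcal{L}$ beats critical dissipation only by a logarithm, so there is essentially no slack, and the commutator must be handled through the $L^p$-form of Kato--Ponce together with the divergence-free structure rather than a cruder $W^{1,\infty}$-based estimate, which would force $\tau\ge 3/2$.
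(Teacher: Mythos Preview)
Your proposal is sound and in fact far more detailed than what the paper provides: the authors do not prove this proposition at all, writing only that ``the proofs of Proposition~\ref{prop-local-supercritical} and Proposition~\ref{prop-local} are by now relatively standard; we refer the reader to \cite{J06,M06}.'' Your Galerkin/energy-method sketch --- reducing $\mathcal{L}$ to a dissipation dominating $c_\delta\Lambda^{1-\delta}$, running the $H^\tau$ energy estimate with a Kato--Ponce-type commutator bound, and closing with Aubin--Lions plus a difference estimate for uniqueness --- is precisely the template of those references, so you are not taking a different route but rather spelling out the one the paper gestures at.

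Two small remarks. First, your observation that the restriction $\tau<5/3$ plays no role in the local theory is correct; it is a constraint coming from elsewhere in the paper and not from this proposition. Second, the one place where your sketch is genuinely delicate is the commutator bound for $\tau$ close to $1$: as you note, a $W^{1,\infty}$-based Kato--Ponce would fail since $\nabla\theta\notin L^\infty$, and one must use the $L^p$-version together with the smoothing norm $\dot H^{\tau+(1-\delta)/2}$ (which, under your constraint $\delta<\tau-1$, lies above $\dot H^{3/2}$) and interpolation. This is exactly how \cite{M06} proceeds via Littlewood--Paley/paraproduct decomposition, so your plan is on target; just be aware that making ``$P(\lVert\theta_N\rVert_{H^\tau})$'' explicit requires a little care with the exponents.
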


The proofs of Proposition $\ref{prop-local-supercritical}$ and Proposition $\ref{prop-local}$ are by now relatively standard; we refer the reader to \cite{J06,M06}.

In the rest of this section, we establish an important global estimate for the perturbation equation (\ref{eq-perturbation-supercritical}), which will be used as a tool in the proof of Theorem $\ref{thm1}$.  For $j\in\mathbb{Z}$, define the smoothed Littlewood-Paley projection $$\tilde \Delta_j:=\Delta_{j-1}+\Delta_j+\Delta_{j+1},$$ and the commutator
$$
[f,\Delta_j]g=f\Delta_j g-\Delta_j(fg).
$$

We recall two technical lemmas regarding these operators.

\begin{lemma}
\label{lem2}
Let $\alpha_1<2$ and $\alpha_2<1$ be constants such that
$\alpha_2+\alpha_1>0$. Then for any $f\in H^{\alpha_1}$, $g\in H^{\alpha_2}$, and $j\ge 0$, we have
$$
\|\tilde \Delta_j[f,\Delta_j]g\|_{L^2}\le
C2^{-({\alpha_1}+{\alpha_2}-1)j}c_j\|f\|_{\dot
H^{\alpha_1}}\|g\|_{\dot H^{{\alpha_2}}},
$$
where $\{c_j\}\in l^2$ satisfies $\|c_j\|_{l^2}\leq 1$,
and $C=C(\alpha_1,\alpha_2)$.
\end{lemma}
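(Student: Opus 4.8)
The goal is to prove the commutator estimate in Lemma~\ref{lem2}. The plan is to expand the commutator using the Bony paraproduct decomposition and bound each resulting piece by $L^2$--$L^\infty$ type arguments combined with Bernstein's inequalities.

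First, I would write $[f,\Delta_j]g = f\Delta_j g - \Delta_j(fg)$ and split $f$ and $g$ into their low and high frequency parts relative to the scale $2^j$: write $f = S_{j-N}f + (f - S_{j-N}f)$ and similarly for $g$, for a fixed integer $N$. The key observation is that $\Delta_j$ acts on frequencies comparable to $2^j$, so in $\Delta_j(fg)$ only the terms in the product $fg$ with total frequency $\sim 2^j$ survive. This leads to the standard three-piece decomposition: (i) the piece where $f$ has low frequency ($\lesssim 2^j$) and $g$ has frequency $\sim 2^j$ — here one uses the commutator structure and the smoothness of the low-frequency part of $f$, writing $[S_{j-N}f,\Delta_j]g$ and expanding via a Taylor/mean-value argument on the kernel of $\Delta_j$ to gain a factor $2^{-j}\nabla f$; (ii) the piece where $f$ has high frequency ($\gtrsim 2^j$) and $g$ has low frequency ($\lesssim 2^j$); and (iii) the ``diagonal'' piece where both $f$ and $g$ have frequencies $\gtrsim 2^j$ and close to each other. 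The outer projection $\tilde\Delta_j$ restricts everything to frequencies $\sim 2^j$, which is what kills the bulk of the paraproduct and leaves only these localized contributions.

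For piece (i), the commutator kernel bound gives $\|[S_{j-N}f,\Delta_j]g\|_{L^2} \lesssim 2^{-j}\|\nabla S_{j-N}f\|_{L^\infty}\|\tilde\Delta_j g\|_{L^2}$; then Bernstein in the form $\|\nabla S_{j-N}f\|_{L^\infty} \lesssim \sum_{k \le j-N} 2^{k}2^{k}\|\Delta_k f\|_{L^2} = \sum_{k\le j-N} 2^{(2-\alpha_1)k} 2^{\alpha_1 k}\|\Delta_k f\|_{L^2}$, and since $\alpha_1 < 2$ the geometric sum in $k$ converges and is bounded by $2^{(2-\alpha_1)j}c_j'\|f\|_{\dot H^{\alpha_1}}$ with $c_j' \in \ell^2$ (actually $\ell^1$). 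Combined with $\|\tilde\Delta_j g\|_{L^2} \lesssim 2^{-\alpha_2 j}c_j''\|g\|_{\dot H^{\alpha_2}}$, the exponents add to $2^{-j}2^{(2-\alpha_1)j}2^{-\alpha_2 j} = 2^{-(\alpha_1+\alpha_2-1)j}$, as desired, and the product of an $\ell^2$ sequence with an $\ell^\infty$ (summable) sequence stays in $\ell^2$. For piece (ii), one does not use the commutator structure but rather bounds $\|\Delta_j(f_{\mathrm{high}}\,S_{j-N}g)\|_{L^2}$ and $\|f_{\mathrm{high}}\,\Delta_j g\|_{L^2}$ directly; here $f_{\mathrm{high}}$ is summed over $k \gtrsim j$ against $\|S_{j-N}g\|_{L^\infty} \lesssim \sum_{m \le j} 2^{m}\|\Delta_m g\|_{L^2}$, which converges at the top since $\alpha_2 < 1$, and the sum over $k \gtrsim j$ of $2^{-\alpha_1 k}$ converges since $\alpha_1 > -\alpha_2$ (so in particular the relevant exponent is controlled; one uses $\alpha_1 + \alpha_2 > 0$ here). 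Piece (iii) is handled by Cauchy–Schwarz over the near-diagonal frequencies with the condition $\alpha_1 + \alpha_2 > 0$ ensuring the double sum converges.

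The main obstacle I anticipate is bookkeeping the three regimes so that in each one the final $j$-exponent is exactly $2^{-(\alpha_1+\alpha_2-1)j}$ and the accumulated sequence is genuinely in $\ell^2$ with norm $\le 1$ (after absorbing constants), while using each of the three hypotheses $\alpha_1 < 2$, $\alpha_2 < 1$, $\alpha_1+\alpha_2 > 0$ exactly where a geometric series would otherwise diverge — the low-frequency-of-$f$ sum needs $\alpha_1<2$, the low-frequency-of-$g$ sum needs $\alpha_2<1$, and the high-high diagonal sum needs $\alpha_1+\alpha_2>0$. Care is also needed because $f$ and $g$ lie only in homogeneous spaces and $j \ge 0$, so one should work with the inhomogeneous-in-range-but-homogeneous-in-norm setup and make sure no low-frequency divergence sneaks in; this is why the statement is restricted to $j \ge 0$. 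Since this is a standard commutator estimate of Kato–Ponce / Kenig–Ponce–Vega type in Littlewood–Paley form, I would likely cite the relevant references for the kernel commutator bound rather than reproving it, and present the frequency decomposition and the three summation arguments in detail.
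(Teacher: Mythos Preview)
Your sketch is correct and is essentially the standard paraproduct/Bernstein argument for this commutator estimate; the paper does not give its own proof but simply cites \cite[Lemma~8.4]{D08} and \cite[Proposition~2]{M06}, where exactly this decomposition is carried out. Your identification of where each hypothesis ($\alpha_1<2$, $\alpha_2<1$, $\alpha_1+\alpha_2>0$) is consumed matches those references.
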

\begin{proof}
See \cite[Lemma 8.4]{D08} or \cite[Proposition 2]{M06}.
\end{proof}

\begin{lemma}
\label{lem3}
Let ${\alpha_1}<1$ and ${\alpha_2}\in \bR$ be constants. Then for any $f\in H^{\alpha_1}$, $g\in H^{\alpha_2}$, and $j\ge 0$, we have
$$
\|\tilde \Delta_j(fg_j)\|_{L^2}\leq
C2^{-({\alpha_1}+{\alpha_2}-1)j}c_j\|f\|_{\dot
H^{\alpha_1}}\|g\|_{\dot H^{{\alpha_2}}},
$$
where $\{c_j\}\in l^2$ satisfies $\|c_j\|_{l^2}\leq 1$,
and $C=C(\alpha_1,\alpha_2)$.
\end{lemma}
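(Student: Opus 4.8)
The plan is to prove this via the Bony paraproduct decomposition together with the Bernstein inequality; it is a commutator-free (and thus simpler) analogue of Lemma~\ref{lem2}, with the hypothesis $\alpha_1<1$ playing the role that $\alpha_1<2$ played there. Recalling that $g_j=\Delta_j g$, I would write $f=\sum_k \Delta_k f$ and expand
\[
\tilde\Delta_j(fg_j)=\sum_k \tilde\Delta_j\big(\Delta_k f\,\Delta_j g\big).
\]
The Fourier transform of $\Delta_k f\,\Delta_j g$ is supported in a ball of radius $\lesssim 2^{\max(k,j)}$, and moreover in an annulus $\{|\xi|\sim 2^k\}$ once $k\ge j+C_0$ for a suitable fixed $C_0$. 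Since $\tilde\Delta_j$ is a Fourier multiplier supported in $\{|\xi|\sim 2^j\}$, every term with $k>j+C_0$ vanishes, so $\tilde\Delta_j(fg_j)=\tilde\Delta_j\big((S_{j+C_0}f)\,\Delta_j g\big)$, and it suffices to sum over $k\le j+C_0$.

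For each such $k$, I would combine the uniform $L^2$-boundedness of $\tilde\Delta_j$, Hölder's inequality, and the Bernstein inequality $\|\Delta_k f\|_{L^\infty}\lesssim 2^{k}\|\Delta_k f\|_{L^2}$ on $\mathbb{T}^2$ to obtain
\[
\big\|\tilde\Delta_j(\Delta_k f\,\Delta_j g)\big\|_{L^2}\lesssim \|\Delta_k f\|_{L^\infty}\|\Delta_j g\|_{L^2}\lesssim 2^{(1-\alpha_1)k}\,d_k\cdot 2^{-\alpha_2 j}\,e_j,
\]
where $d_k:=2^{\alpha_1 k}\|\Delta_k f\|_{L^2}$ and $e_j:=2^{\alpha_2 j}\|\Delta_j g\|_{L^2}$ determine $\ell^2$ sequences with $\|(d_k)\|_{\ell^2}\lesssim\|f\|_{\dot H^{\alpha_1}}$ and $\|(e_j)\|_{\ell^2}\lesssim\|g\|_{\dot H^{\alpha_2}}$. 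Summing over $k\le j+C_0$ and factoring out $2^{(1-\alpha_1)j}$ gives
\[
\big\|\tilde\Delta_j(fg_j)\big\|_{L^2}\lesssim 2^{(1-\alpha_1-\alpha_2)j}\,e_j\sum_{k\le j+C_0}2^{(1-\alpha_1)(k-j)}d_k .
\]
Here the hypothesis $\alpha_1<1$ is used decisively: since $1-\alpha_1>0$ the geometric series $\sum_{m\ge -C_0}2^{-(1-\alpha_1)m}$ converges, so the inner sum is bounded by $C\sup_k|d_k|\le C\|(d_k)\|_{\ell^2}\lesssim\|f\|_{\dot H^{\alpha_1}}$, uniformly in $j$.

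Combining these estimates, $\|\tilde\Delta_j(fg_j)\|_{L^2}\lesssim 2^{-(\alpha_1+\alpha_2-1)j}\,e_j\,\|f\|_{\dot H^{\alpha_1}}$, and since $(e_j)\in\ell^2$ with $\|(e_j)\|_{\ell^2}\lesssim\|g\|_{\dot H^{\alpha_2}}$, it remains only to absorb constants and take $c_j$ proportional to $e_j/\|g\|_{\dot H^{\alpha_2}}$, which then satisfies $\|(c_j)\|_{\ell^2}\le 1$ and yields precisely $\|\tilde\Delta_j(fg_j)\|_{L^2}\le C\,2^{-(\alpha_1+\alpha_2-1)j}c_j\|f\|_{\dot H^{\alpha_1}}\|g\|_{\dot H^{\alpha_2}}$ (the degenerate case $\|g\|_{\dot H^{\alpha_2}}=0$ being harmless, as in the intended applications the relevant functions are mean-zero and $\dot H^{\alpha_2}\simeq H^{\alpha_2}$ on $\mathbb{T}^2$). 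I expect the only genuinely delicate point to be the frequency-support bookkeeping of the first step---pinning down which blocks $\Delta_k f$ interact nontrivially with $\tilde\Delta_j(\,\cdot\,\Delta_j g)$---together with the structural observation that $\alpha_1<1$ is exactly the threshold making the ensuing geometric sum converge; everything else reduces to standard Littlewood--Paley and Bernstein estimates, and the same scheme (with a commutator-induced extra factor $2^{-j}$) underlies the proof of Lemma~\ref{lem2}.
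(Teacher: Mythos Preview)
Your argument is correct. The paper does not actually supply a proof of this lemma; it simply refers the reader to \cite[Lemma~8.5]{D08}. Your proof via Littlewood--Paley frequency localization and Bernstein's inequality is the standard one and is essentially what one would find upon consulting that reference: the key points---that only blocks $\Delta_k f$ with $k\le j+C_0$ survive after applying $\tilde\Delta_j$, and that the hypothesis $\alpha_1<1$ is exactly what makes the geometric sum $\sum_{m\ge -C_0}2^{-(1-\alpha_1)m}$ converge---are identified correctly.
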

\begin{proof}
See \cite[Lemma 8.5]{D08}.
\end{proof}

Now we give
\begin{proposition}
\label{prop2}
Fix $0<\gamma<1$, and let $f\in H^{3}(\mathbb{T}^2)$ be given with $\int fdx=0$.  Suppose that $\Theta_0\in H^3(\mathbb{T}^2)$ solves the stationary equation (\ref{eq-steady-supercritical}).
Then there exists $C_0>0$ depending on $\gamma$
such that if $\tth$ is a smooth solution to (\ref{eq-perturbation-supercritical}) with
\begin{equation}
\label{eq12.10}
\sup_{t\in [0,\infty)}\|\tth(t,\cdot)\|_{H^{2-\gamma}}\le C_0,
\end{equation}
then we have
\begin{equation}
\label{eq12.11}
\|\tth(t,\cdot)\|_{H^{2-2\gamma/3}}\le C(1+t^{-1/3}),
\end{equation}
for some constant $C>0$ independent of $t$.
\end{proposition}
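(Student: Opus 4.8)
The plan is to run a frequency-localized energy estimate for $\tth$ in a homogeneous Sobolev norm of order $s = 2 - 2\gamma/3$, which sits strictly between the a priori controlled level $2-\gamma$ and the dissipation gain. Applying $\tilde\Delta_j$ to \eqref{eq-perturbation-supercritical}, multiplying by $\tilde\Delta_j\tth$ and summing against $2^{2sj}$, one gets a differential inequality of the form
\begin{align*}
\frac{d}{dt}\|\tth\|_{\dot H^s}^2 + c\,\|\tth\|_{\dot H^{s+\gamma/2}}^2 \le (\text{transport and stretching terms}) + (\text{nonlinear term}),
\end{align*}
where the dissipative term on the left comes from $\Lambda^\gamma$ (using that $\Lambda^\gamma$ is essentially $2^{\gamma j}$ on the $j$-th block). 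The transport term $-(R^\perp\Theta_0)\cdot\nabla\tth$ and the stretching term $-(R^\perp\tth)\cdot\nabla\Theta_0$ from $L_\gamma$, together with the nonlinear term $N(\tth) = -(R^\perp\tth)\cdot\nabla\tth$, are the three contributions to be bounded. The strategy throughout is to spend the a priori $H^{2-\gamma}$ bound \eqref{eq12.10} on $\tth$ (and the fixed $H^3$ bound on $\Theta_0$, which also gives $H^3$ control by elliptic regularity from \eqref{eq-steady-supercritical}), absorb the top-order derivative into the dissipation term $c\|\tth\|_{\dot H^{s+\gamma/2}}^2$, and — crucially — make the constant in front of the resulting $\|\tth\|_{\dot H^s}^2$ term have a small coefficient proportional to $\|\tth\|_{H^{2-\gamma}}$, so that for $C_0$ small enough one gets a clean absorption.

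For the transport term, the standard commutator trick gives
\begin{align*}
\Big| \sum_j 2^{2sj}\int \tilde\Delta_j\big((R^\perp\Theta_0)\cdot\nabla\tth\big)\,\tilde\Delta_j\tth\,dx \Big| \lesssim \|R^\perp\Theta_0\|_{\text{Lip}}\,\|\tth\|_{\dot H^s}^2 + (\text{commutator}),
\end{align*}
where the commutator pieces are handled by Lemma \ref{lem2} (with $\alpha_1$ near $2$ coming from $R^\perp\Theta_0 \in H^2$ at worst, $\alpha_2$ chosen so that $\alpha_1+\alpha_2-1$ matches the derivative count $s$ up to the $\gamma/2$ to be absorbed). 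The stretching term $(R^\perp\tth)\cdot\nabla\Theta_0$ and the nonlinear term $N(\tth)$ are paraproduct-type expressions which I would decompose and estimate using Lemma \ref{lem3}, again placing one factor in $\dot H^{2-\gamma}$ (or better, using $\Theta_0 \in H^3$ for the stretching term) and the other in a space whose regularity plus $1$ matches $2s + \gamma/2$, so that the high-frequency derivative excess lands on a factor that can be absorbed into the dissipation with a coefficient controlled by $C_0$. The mean-zero condition \eqref{eq-mean-zero} (hence on $\Theta_0$ and on $\tth$, since the equation preserves zero mean) is what allows Poincaré/equivalence of homogeneous and inhomogeneous norms and lets us work with $\dot H^s$.

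The upshot of these estimates is a differential inequality
\begin{align*}
\frac{d}{dt}\|\tth\|_{\dot H^s}^2 + c_1\|\tth\|_{\dot H^{s+\gamma/2}}^2 \le C_1,
\end{align*}
valid once $C_0$ is small enough that all the $\|\tth\|_{\dot H^s}^2$-type terms with small coefficients have been absorbed into $c_1\|\tth\|_{\dot H^{s+\gamma/2}}^2$ (note $s < s+\gamma/2$, with a uniform low-frequency correction handled by \eqref{eq12.10}). Finally, to get the stated blow-up-in-time-at-$t=0$ decay $\|\tth(t)\|_{H^{2-2\gamma/3}} \le C(1+t^{-1/3})$: the exponent $1/3 = (\gamma/3)/\gamma \cdot \tfrac{1}{?}$ comes from interpolating $\dot H^s$ between $\dot H^{2-\gamma}$ (bounded by $C_0$) and $\dot H^{s+\gamma/2}$ (integrable in time from the dissipation inequality). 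Precisely, since $s = 2-2\gamma/3$ is an interpolant, $\|\tth\|_{\dot H^s} \lesssim \|\tth\|_{\dot H^{2-\gamma}}^{1-\beta}\|\tth\|_{\dot H^{s+\gamma/2}}^{\beta}$ for the appropriate $\beta \in (0,1)$, and then one converts the time-integrability $\int_0^t \|\tth\|_{\dot H^{s+\gamma/2}}^2 \le C_1 t + C_0^2$ into the pointwise $t^{-1/3}$ bound by a standard argument (e.g. on each interval $[t/2,t]$ the average of $\|\tth\|_{\dot H^{s+\gamma/2}}^2$ is $\lesssim 1 + t^{-1}$, pick a good time, then propagate forward using the dissipation inequality again). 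I expect the main obstacle to be the bookkeeping in the commutator and paraproduct estimates: one must check that the hypotheses $\alpha_1 < 2$, $\alpha_2 < 1$, $\alpha_1 + \alpha_2 > 0$ of Lemma \ref{lem2} and $\alpha_1 < 1$ of Lemma \ref{lem3} are simultaneously satisfiable for the chosen $s$ in the full range $\gamma \in (0,1)$, and that there is genuinely room — i.e. the derivative count leaves a strictly positive fraction $\gamma/2$ (or at least enough of it) to be absorbed by the dissipation rather than eaten by the nonlinearity, which is exactly where the smallness threshold $C_0$ and the restriction $s = 2 - 2\gamma/3$ (as opposed to something closer to $2-\gamma/2$) are being used.
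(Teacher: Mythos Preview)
Your route is genuinely different from the paper's, and the final step has a real gap. The paper does \emph{not} sum the Littlewood--Paley blocks into a single $\dot H^s$ energy inequality; instead it keeps the estimate frequency-by-frequency, applies Gronwall to each $\|\tth_j\|_{L^2}$, and uses the semigroup smoothing $2^{(\gamma/3)j}e^{-c2^{j\gamma}t}\lesssim t^{-1/3}$ directly on the initial-data term to produce the $t^{-1/3}$ factor. The commutator terms (estimated via Lemmas~\ref{lem2}--\ref{lem3} with $\alpha_1=2-2\gamma/3$, $\alpha_2=1-2\gamma/3$) then feed back quadratically in $\|\tth(s)\|_{\dot H^{2-2\gamma/3}}$, and one closes a bootstrap for the quantity $X=\sup_{0<s<T}s^{1/3}\|\tth(s)\|_{\dot H^{2-2\gamma/3}}$; the $\Theta_0$-linear terms are absorbed by taking $T$ small (not by smallness of $C_0$), and the bound for $t\ge T$ follows by shifting the initial time.

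The gap in your argument is the claimed bound $\int_0^t\|\tth\|_{\dot H^{s+\gamma/2}}^2\le C_1 t+C_0^2$. Integrating your differential inequality at level $s=2-2\gamma/3$ gives $\|\tth(0)\|_{\dot H^s}^2$ on the right, not $C_0^2$, and $s>2-\gamma$ means this is precisely the uncontrolled quantity. Your ``pick a good time and propagate'' scheme therefore does not yield a bound independent of $\|\tth(0)\|_{\dot H^s}$ as written. There are two ways to repair this within your framework: either (i) first run the energy estimate at the \emph{lower} level $2-\gamma$ (where the initial term is genuinely $\le C_0^2$) to get $\int_0^t\|\tth\|_{\dot H^{2-\gamma/2}}^2\lesssim C_0^2+t$, then interpolate $\dot H^s$ between $\dot H^{2-\gamma}$ and $\dot H^{2-\gamma/2}$ (exponent $\theta=2/3$, which is where the $t^{-1/3}$ actually comes from), and only afterwards propagate using the $\dot H^s$-level inequality together with Poincar\'e; or (ii) feed the interpolation $\|\tth\|_{\dot H^{s+\gamma/2}}\gtrsim C_0^{-3/2}\|\tth\|_{\dot H^s}^{5/2}$ back into your differential inequality to get a superlinear ODE $y'+cy^{5/2}\le C_1$ whose decay $y\lesssim t^{-2/3}$ is independent of $y(0)$. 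Neither of these is what you wrote, and both require nontrivial additional work compared to the paper's direct semigroup argument.
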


\begin{proof}
Applying the Littlewood-Paley projection to both sides of the equation
$$
\partial_t \tth+u\cdot \nabla\tth+\tu\cdot\nabla \Theta_0+\Lambda^\gamma \tth=0
$$
with $u=R^\perp\theta$ and $\tu=R^\perp\tth$, we get
$$
\partial_t \tth_j+u\cdot \nabla\tth_j+\tu_j\cdot\nabla \Theta_0+\Lambda^\gamma \tth_j=[u,\Delta_j]\nabla \tth
+[\nabla \Theta_0,\Delta_j]\tu.
$$
We multiply the equation above by $\tth_j$, integrate in $x$, and use $\divop u=0$ to obtain
\begin{align*}
&\partial_t \|\tth_j(t,\cdot)\|_{L^2}+c2^{j\gamma}\|\tth_j(t,\cdot)\|_{L^2} \\
&\le C\big(\|\tilde \Delta_j[u,\Delta_j]\nabla \tth\|_{L^2}
+\|\tilde \Delta_j[\nabla\Theta_0,\Delta_j]\tu \|_{L^2}+\|\tilde \Delta_j(\tu_j\cdot\nabla \Theta_0)\|_{L^2}\big).
\end{align*}
By the Gronwall inequality,
\begin{align*}
& \|\tth_j(t,\cdot)\|_{L^2}\le e^{-c2^{j\gamma}t}\|\tth_j(0,\cdot)\|_{L^2} \\
&\le C\int_0^t e^{-c2^{j\gamma}(t-s)}\big(\|\tilde \Delta_j[u,\Delta_j]\nabla \tth(s,\cdot)\|_{L^2}
+\|\tilde \Delta_j[\nabla \Theta_0,\Delta_j]\tu(s,\cdot)\|_{L^2}\\
&\qquad+\|\tilde \Delta_j(\tu_j\cdot\nabla \Theta_0)(s,\cdot)\|_{L^2}\big)\,ds.
\end{align*}
Now we multiply both sides by $2^{(2-2\gamma/3)j}$ to get
\begin{align*}
& \|\tth_j(t,\cdot)\|_{\dot H^{2-2\gamma/3}}\le Ct^{-1/3}\|\tth_j(0,\cdot)\|_{\dot H^{2-\gamma}} \\
&+ C\int_0^t e^{-c2^{j\gamma}(t-s)}2^{(2-2\gamma/3)j}\big(\|\tilde \Delta_j[u,\Delta_j]\nabla \tth(s,\cdot)\|_{L^2}
+\|\tilde \Delta_j[\nabla \Theta_0,\Delta_j]\tu(s,\cdot)\|_{L^2}\\
&\qquad+\|\tilde \Delta_j(\tu_j\cdot\nabla \Theta_0)(s,\cdot)\|_{L^2}\big)\,ds.
\end{align*}
To estimate the first two terms on the right-hand side, we apply Lemma \ref{lem2} with $\alpha_1=2-2\gamma/3$ and $\alpha_2=1-2\gamma/3$. For the last term, we apply Lemma \ref{lem3} with $\alpha_1=1-2\gamma/3$ and $\alpha_2=2-2\gamma/3$. With a straightforward calculation, we get
\begin{align*}
& \|\tth_j(t,\cdot)\|_{\dot H^{2-\gamma/2}}\le Ct^{-1/3}\|\tth_j(0,\cdot)\|_{\dot H^{2-\gamma}} \\
&\quad +C\int_0^t (t-s)^{-2/3}\|\tth(s,\cdot)\|_{\dot H^{2-2\gamma/3}}\big(\|\theta(s,\cdot)\|_{\dot H^{2-2\gamma/3}}
+\|\Theta_0\|_{\dot H^{2-2\gamma/3}}
\big)\,ds\\
&\le  Ct^{-1/3}\|\tth_j(0,\cdot)\|_{\dot H^{2-\gamma}}+Cc_jt^{-1/3}\Big(\sup_{s\in (0,t)}s^{1/3}\|\tth(s,\cdot)\|_{\dot H^{2-2\gamma/3}}\Big)^2\\
&\quad+Cc_j\|\Theta_0\|_{\dot H^{3-2\gamma/3}}
\sup_{s\in (0,t)}s^{1/3}\|\tth(s,\cdot)\|_{\dot H^{2-2\gamma/3}}.
\end{align*}
Taking the $l_2$ norm on both sides and then taking the supremum in $t\in (0,T)$ for some $T>0$ to be specified, we obtain
\begin{align*}
& \sup_{s\in (0,T)}s^{1/3}\|\tth(s,\cdot)\|_{\dot H^{2-2\gamma/3}}\le C\|\tth(0,\cdot)\|_{\dot H^{2-\gamma}}+C\Big(\sup_{s\in (0,T)}s^{1/3}\|\tth(s,\cdot)\|_{\dot H^{2-2\gamma/3}}\Big)^2\\
&\quad+CT^{1/3}\|\Theta_0\|_{\dot H^{3-2\gamma/3}}
\sup_{s\in (0,T)}s^{1/3}\|\tth(s,\cdot)\|_{\dot H^{2-2\gamma/3}}.
\end{align*}
Recall \eqref{eq12.10}. We then take sufficiently small $T$ and $\varepsilon_0$ to get
$$
\sup_{t\in (0,T)}s^{1/3}\|\tth(t,\cdot)\|_{\dot H^{2-2\gamma/3}}\le 2CC_0.
$$
This gives \eqref{eq12.11} when $t\in (0,T)$. Finally, for $t\ge T$, we can view $t-T/2$ as the initial time.
\end{proof}

\section{Proof of Theorem \ref{thm1}: $(H^{2-\gamma},H^{2-\gamma})$ nonlinear instability for forced supercritical SQG}

In this section we prove Theorem $\ref{thm1}$, which establishes sufficient conditions for $H^{2-\gamma}(\mathbb{T}^2)$ nonlinear instability of the supercritical SQG equation (\ref{eq1-supercritical}).

Let $\gamma$, $f$, and $\Theta_0$ be as in the statement of Theorem $\ref{thm1}$, and let $\mu$ be an eigenvalue of $L$ on $\mathbb{T}^2$ with $\lambda:=\iRe(\mu)$ maximal among all such eigenvalues.  Recall that we have $\lambda>0$ as a consequence of our hypothesis on $\Theta_0$.  Let $\delta>0$ be a small parameter to be determined later in the argument, and set
\begin{align}
L_{\gamma,\delta}=L_\gamma-(\lambda+\delta)I,\label{def-ldelta}
\end{align}
so that $L_{\gamma,\delta}$ is an operator with spectrum entirely in the left half-plane $\iRe z<0$.

It follows from relatively standard arguments that $L_{\gamma,\delta}$ is the generator of an analytic semigroup on $L^2(\mathbb{T}^2)$ (c.f. Lemma 3.1 in \cite{FPV}).  Our first step towards the instability claim is to show a decay estimate for $\varphi\mapsto e^{tL_{\gamma,\delta}}\varphi$.
\begin{lemma}
\label{lemma-ldelta-supercritical}
Fix $\gamma\in (0,1)$, $\varepsilon\in (0,1)$, and $\sigma\in [0,1]$.  Suppose that $\Theta_0\in H^{2+\varepsilon}(\bT^2)$ satisfies $\int \Theta_0=0$ (and thus $R^\perp\Theta_0$ has the same property, e.g. by observing that this property corresponds to vanishing of the zeroth Fourier coefficient).  Then there exists a constant $C>0$, depending on $\gamma$, $\sigma$, and $\lVert \Lambda^{2+\varepsilon}\Theta_0\rVert_{L^2}$, such that for all $t\neq 0$ and $\varphi\in C^{\infty}(\mathbb{T}^2)$ with $\int_{\mathbb{T}^2} \varphi(x)dx=0$ one has
\begin{align}
\Vert e^{tL_{\gamma,\delta}}\varphi\rVert_{L^2}\leq Ct^{-\sigma}\lVert \varphi\rVert_{L^2}^{1-\sigma}\lVert \Lambda^{-\gamma}\varphi\rVert_{L^2}^\sigma,\label{ldelta-bound}
\end{align}
where $L_{\gamma,\delta}$ is as defined in (\ref{def-ldelta}) as a shift of the linearized operator $L_\gamma$.
\end{lemma}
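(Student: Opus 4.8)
The plan is to prove \eqref{ldelta-bound} at the two endpoints $\sigma=0$ and $\sigma=1$ and then interpolate. Since $e^{tL_{\gamma,\delta}}$ maps mean-zero functions to mean-zero functions, the endpoint bounds assert precisely that $e^{tL_{\gamma,\delta}}$ is bounded $L^2\to L^2$ with norm $\le C$ and bounded $\dot H^{-\gamma}\to L^2$ with norm $\le Ct^{-1}$ on the mean-zero subspace; complex interpolation then gives $e^{tL_{\gamma,\delta}}\colon \dot H^{-\sigma\gamma}\to L^2$ with norm $\le Ct^{-\sigma}$, and combining this with the elementary inequality $\|\Lambda^{-\sigma\gamma}\varphi\|_{L^2}\le\|\varphi\|_{L^2}^{1-\sigma}\|\Lambda^{-\gamma}\varphi\|_{L^2}^{\sigma}$ (H\"older in the Fourier variable) yields \eqref{ldelta-bound}, with a constant depending only on $\sigma$, $\gamma$, and $\|\Lambda^{2+\varepsilon}\Theta_0\|_{L^2}$. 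The case $\sigma=0$---uniform boundedness of $e^{tL_{\gamma,\delta}}$ on $L^2$---is immediate from the facts recalled before the lemma: $L_{\gamma,\delta}$ generates an analytic semigroup, and by the choice of the shift by $\lambda+\delta$ its spectrum is moreover bounded away from the imaginary axis, lying in $\{\iRe z\le-\delta\}$ (using that $\lambda$ is maximal among the relevant eigenvalues and that the essential spectrum, inherited from $-\Lambda^\gamma$, is real and $\le-\delta$); since the growth bound of an analytic semigroup equals its spectral bound, $\|e^{tL_{\gamma,\delta}}\|_{L^2\to L^2}\le Me^{-\delta t}$ (cf.\ \cite{FPV}).

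The heart of the matter is the case $\sigma=1$, that is $\|e^{tL_{\gamma,\delta}}\varphi\|_{L^2}\le Ct^{-1}\|\Lambda^{-\gamma}\varphi\|_{L^2}$. I would prove this by a parabolic cascade for $v(t):=e^{tL_{\gamma,\delta}}\varphi$ that climbs from $\dot H^{-\gamma}$ up to $L^2$. Pairing $\partial_t v=L_{\gamma,\delta}v$ with $\Lambda^{-2\gamma}v$ and using that the transport term $-(R^\perp\Theta_0)\cdot\nabla(\cdot)$ is divergence-free---so that it is skew-adjoint modulo the commutator $[\Lambda^{-\gamma},(R^\perp\Theta_0)\cdot\nabla]$, which is of order $-\gamma$ and bounded by $C(\|\Theta_0\|_{H^{2+\varepsilon}})\|v\|_{\dot H^{-\gamma}}$---that the zeroth-order term $-(R^\perp v)\cdot\nabla\Theta_0$ is bounded on $\dot H^{-\gamma}$ by $C(\|\Theta_0\|_{H^{2+\varepsilon}})\|v\|_{\dot H^{-\gamma}}$, and discarding the favorable term $-2(\lambda+\delta)\|v\|_{\dot H^{-\gamma}}^2$, one gets $\frac{d}{dt}\|v\|_{\dot H^{-\gamma}}^2+2\|v\|_{\dot H^{-\gamma/2}}^2\le C(\|\Theta_0\|_{H^{2+\varepsilon}})\|v\|_{\dot H^{-\gamma}}^2$, which for $t\le1$ yields both $\|v(t)\|_{\dot H^{-\gamma}}\lesssim\|\varphi\|_{\dot H^{-\gamma}}$ and $\int_0^t\|v\|_{\dot H^{-\gamma/2}}^2\,ds\lesssim\|\varphi\|_{\dot H^{-\gamma}}^2$ (using $\int\Theta_0=0$ to replace $\|\Theta_0\|_{H^{2+\varepsilon}}$ by $\|\Lambda^{2+\varepsilon}\Theta_0\|_{L^2}$). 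Running the analogous $\dot H^{-\gamma/2}$ identity, multiplying by $t$, and integrating gives $t\|v(t)\|_{\dot H^{-\gamma/2}}^2+\int_0^t s\|v(s)\|_{L^2}^2\,ds\lesssim\|\varphi\|_{\dot H^{-\gamma}}^2$; then multiplying the $L^2$ identity---in which the transport term now contributes nothing---by $t^2$ and integrating gives $t^2\|v(t)\|_{L^2}^2\lesssim\int_0^t s\|v\|_{L^2}^2\,ds\lesssim\|\varphi\|_{\dot H^{-\gamma}}^2$, which is the assertion for $t\in(0,1]$. For $t>1$ one writes $v(t)=e^{(t-1)L_{\gamma,\delta}}v(1)$, applies the $t=1$ case, and uses the exponential decay from the $\sigma=0$ step together with $e^{-\delta t}\lesssim t^{-1}$. (Equivalently, this bound is dual to the forward parabolic smoothing estimate $\|\Lambda^\gamma e^{tL^{*}_{\gamma,\delta}}w\|_{L^2}\lesssim t^{-1}\|w\|_{L^2}$ for the adjoint, whose generator has the same structure; one may prove whichever is more convenient.)

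I expect the main obstacle to be the commutator estimates used to absorb the transport term at the $\dot H^{-\gamma}$ and $\dot H^{-\gamma/2}$ levels (and, in the dual formulation, for $[\Lambda^{\gamma/2},(R^\perp\Theta_0)\cdot\nabla]$ and $[\Lambda^{\gamma},(R^\perp\Theta_0)\cdot\nabla]$): one must verify that $R^\perp\Theta_0\in H^{2+\varepsilon}(\bT^2)$---which in two dimensions embeds into $C^{1,\varepsilon'}\cap\dot B^{2}_{2,1}$---is exactly enough regularity to bound $[\Lambda^{s},(R^\perp\Theta_0)\cdot\nabla]\colon\dot H^{s}\to L^2$ for $s\in\{-\gamma,-\gamma/2\}$, with norm controlled by $\|\Lambda^{2+\varepsilon}\Theta_0\|_{L^2}$. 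In a Bony decomposition the paraproduct piece $[\Lambda^s,T_{R^\perp\Theta_0}]\nabla v$ is handled using $\nabla(R^\perp\Theta_0)\in L^\infty$; the low--high piece $T_{\nabla v}(R^\perp\Theta_0)$ and the resonant piece require $R^\perp\Theta_0\in\dot B^{1+|s|}_{2,1}$ (available since $1+|s|<2\le2+\varepsilon$) together with Bernstein's inequality on $\bT^2$; and the zeroth-order term $(R^\perp v)\cdot\nabla\Theta_0$ is handled by the Sobolev product bound $\|fg\|_{H^s}\lesssim\|f\|_{H^s}\|g\|_{H^{1+\varepsilon}}$, valid for $|s|\le1+\varepsilon$. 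Keeping careful track of which norm of $\Theta_0$ appears at each step is precisely what forces the hypothesis $\Theta_0\in H^{2+\varepsilon}$; the remaining ingredients---the bootstrap bookkeeping, the extension to $t>1$, and the interpolation step---are routine.
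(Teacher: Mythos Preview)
Your approach is correct and genuinely different from the paper's. The paper, following \cite{FPV}, does not run an energy cascade at all; instead it isolates the transport--dissipation part $A\varphi=-(R^\perp\Theta_0)\cdot\nabla\varphi-\Lambda^\gamma\varphi-\alpha\varphi$ and proves the single resolvent-type bound $\|A^{-1}\Lambda^\gamma\|_{L^2\to L^2}\lesssim 1$ (equivalently, $\|\phi\|_{L^2}\lesssim\|\Lambda^{-\gamma}A\phi\|_{L^2}$), after which \eqref{ldelta-bound} is read off from the abstract semigroup/interpolation machinery already developed in \cite{FPV}. The core of their proof is a direct Fourier-side computation of $\int(R^\perp\Theta_0\cdot\nabla\phi)\overline{\Lambda^{-\gamma}\phi}$: writing the interacting modes $j,k,\ell$ as a triangle and splitting into three cases according to which side is shortest, they obtain the clean bound $C\|\Lambda^{2+\varepsilon}\Theta_0\|_{L^2}\|\phi\|_{L^2}\|\Lambda^{-\gamma/2}\phi\|_{L^2}$, which is then absorbed by choosing $\alpha$ large. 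Your parabolic bootstrap is more hands-on and avoids quoting the \cite{FPV} black box, but it requires the \emph{same} harmonic-analytic input at two levels rather than one---your Bony decomposition for $[\Lambda^s,(R^\perp\Theta_0)\cdot\nabla]$ at $s=-\gamma,-\gamma/2$ is exactly the paper's triangle case-split in paraproduct language.

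One point deserves care. Your statement that the $\dot H^{-\gamma}$-level commutator is ``bounded by $C(\|\Theta_0\|_{H^{2+\varepsilon}})\|v\|_{\dot H^{-\gamma}}$'' is slightly optimistic in the high--high$\to$low regime ($|\ell|\ll|j|\sim|k|$ in the paper's notation): there the naive weight count forces more regularity on $\Theta_0$ than $H^{2+\varepsilon}$ if you insist on closing in $\|v\|_{\dot H^{-\gamma}}^2$ alone. The fix is the one the paper's Fourier bound already suggests---the transport contribution at that level is controlled by $C\|v\|_{\dot H^{-\gamma/2}}\|v\|_{\dot H^{-\gamma}}$ (or, after Young, by $\tfrac12\|v\|_{\dot H^{-\gamma/2}}^2+C\|v\|_{\dot H^{-\gamma}}^2$), so you borrow half of the dissipation and the cascade proceeds unchanged. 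With that adjustment your three-step bootstrap ($\dot H^{-\gamma}\to\dot H^{-\gamma/2}\to L^2$ with time weights $1,t,t^2$) closes exactly as you describe.
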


\begin{proof}
We follow the outline of the proof of Lemma $3.2$ in \cite{FPV}.  Fix a parameter $\alpha>0$ and define, for $\varphi\in C^\infty(\mathbb{T}^2)$,
$$A\varphi=A_{\gamma,\alpha}\varphi:=-(R^\perp\Theta_0)\cdot \nabla \varphi-\Lambda^\gamma \varphi-\alpha \varphi.$$

Noting that this can be rewritten as
$$A\varphi=L_{\gamma,\delta}\varphi+(R^\perp \varphi)\cdot\nabla\Theta_0-(\alpha-\lambda-\delta)\varphi,$$
the essential ingredient in the argument is to establish a bound of the form
\begin{align}
\lVert A^{-1}\Lambda^{\gamma}\varphi\rVert_{L^2}\lesssim \lVert\varphi\rVert_{L^2}.\label{eq1-A}
\end{align}
The desired bound (\ref{ldelta-bound}) will then follow from routine application of semigroup decay and interpolation estimates as in \cite{FPV}.

To establish (\ref{eq1-A}), it suffices to show
\begin{align}
\lVert \phi\rVert_{L^2}\lesssim \lVert \Lambda^{-\gamma}A\phi\rVert_{L^2}\label{eq2}
\end{align}
for $\phi$ satisfying $\int \phi dx=0$ (for instance, given $\varphi$ one can take $\phi=A^{-1}\Lambda^\gamma\varphi$; an application of (\ref{eq2}) then gives (\ref{eq1-A})).

To obtain ($\ref{eq2}$), write
\begin{align*}
\int (\Lambda^{-\gamma}A\phi)\overline{\phi}dx=-\int \Big[\Lambda^{-\gamma}((R^\perp\Theta_0)\cdot \nabla \phi)\Big]\overline{\phi}dx-\lVert \phi\rVert_{L^2}^2-\alpha\lVert \Lambda^{-\gamma/2}\phi\rVert_{L^2}^2.
\end{align*}
This leads to the estimate
\begin{align*}
\lVert \phi\rVert_{L^2}^2+\alpha\lVert \Lambda^{-\gamma/2}\phi\rVert_{L^2}^2&=-\int (\Lambda^{-\gamma}A\phi)\overline{\phi} dx-\int \Big[\Lambda^{-\gamma}((R^\perp\Theta_0)\cdot\nabla\phi)\Big]\overline{\phi} dx\\
&\leq \lVert \Lambda^{-\gamma}A\phi\rVert_{L^2}\lVert\phi\rVert_{L^2}+\bigg|\int \Big[(R^\perp\Theta_0)\cdot \nabla\phi\Big]\overline{\Big[\Lambda^{-\gamma}\phi\Big]}dx\bigg|
\end{align*}
Writing
\begin{align*}
(R^\perp\Theta_0)(x)=\sum_{j\in\mathbb{Z}^2, j\neq 0} \widehat{R}_je^{2\pi i j\cdot x},\quad
\phi(x)=\sum_{k\in\mathbb{Z}^2, k\neq 0} \widehat{\phi}_ke^{2\pi i k\cdot x},
\end{align*}
and observing that $\divop(R^\perp\Theta_0)=0$ implies $$\int ((R^\perp\Theta_0)\cdot \nabla \Lambda^{-\gamma/2}\phi)\overline{(\Lambda^{-\gamma/2}\phi)}dx=0,$$
one obtains
\begin{align}
\nonumber &\int \Big[(R^\perp\Theta_0)\cdot\nabla\phi\Big]\overline{\Big[\Lambda^{-\gamma}\phi\Big]}dx\\
\nonumber &\hspace{0.2in}=\int (R^\perp\Theta_0)\cdot\bigg((\nabla\phi)\overline{(\Lambda^{-\gamma}\phi)}-(\nabla \Lambda^{-\gamma/2}\phi)\overline{(\Lambda^{-\gamma/2}\phi)}\bigg)dx\\
\nonumber &\hspace{0.2in}=\int \bigg(\sum_{j\neq 0} \widehat{R}_je^{2\pi i j\cdot x}\bigg)\cdot \bigg[\bigg(\sum_{k\neq 0} (ik)\widehat{\phi}_ke^{2\pi i k\cdot x}\bigg)\bigg(\sum_{\ell\neq 0} |\ell|^{-\gamma}\widehat{\phi}_\ell e^{-2\pi i\ell\cdot x}\bigg)\\
\nonumber &\hspace{0.4in}-\bigg(\sum_{k\neq 0} (ik)|k|^{-\gamma/2}\widehat{\phi}_ke^{2\pi i k\cdot x}\bigg)\bigg(\sum_{\ell\neq 0} |\ell|^{-\gamma/2}\widehat{\phi}_{\ell}e^{-2\pi i\ell\cdot x}\bigg)\bigg]dx\\
&\hspace{0.2in}=\sum_{\substack{j,k,\ell\neq 0\\j+k-\ell=0}} (\widehat{R}_j\cdot ik)(|\ell|^{-\gamma}-|k|^{-\gamma/2}|\ell|^{-\gamma/2})\widehat{\phi}_k\widehat{\phi}_{\ell},\label{eq-rperp}
\end{align}
where we omitted factors of $2\pi$ as usual.
This identity in turn leads to
\begin{align}
\nonumber &\bigg|\int \Big[(R^\perp\Theta_0)\cdot\nabla\phi\Big]\overline{\Big[\Lambda^{-\gamma}\phi\Big]}dx\bigg|\\
&\hspace{0.2in}\leq \sum_{\substack{j,k,\ell\neq 0\\j+k-\ell=0}} |\widehat{R}_j|\,|\widehat{\phi}_k|\, |\widehat{\phi}_\ell|\, \bigg|\frac{|k|}{|\ell|^{\gamma/2}}(|\ell|^{-\gamma/2}-|k|^{-\gamma/2})\bigg|.\label{eq3}
\end{align}

Note that the three vectors $j$, $k$, and $\ell$ form a triangle.  It is easily seen that for any triangle, the length of the largest two sides are comparable.  We discuss three cases.  Set $m=\min \{|j|,|k|,|\ell|\}$.  If $m=|\ell|$, then we have $|j|\sim |k|$, and thus
\begin{align*}
|k||\ell|^{-\gamma/2}\Big||\ell|^{-\gamma/2}-|k|^{-\gamma/2}\Big|\lesssim |k||\ell|^{-\gamma/2}\sim |j||\ell|^{-\gamma/2}.
\end{align*}
Alternatively, if $m=|k|$, then $|\ell|\sim |j|$, and
\begin{align*}
|k||\ell|^{-\gamma/2}\Big||\ell|^{-\gamma/2}-|k|^{-\gamma/2}\Big|\lesssim |k|^{1-\gamma/2}|\ell|^{-\gamma/2}\lesssim |j||\ell|^{-\gamma/2},
\end{align*}
while if $m=|j|$, then $|k|\sim |\ell|$ and by the mean value theorem,
\begin{align*}
|k||\ell|^{-\gamma/2}\Big||\ell|^{-\gamma/2}-|k|^{-\gamma/2}\Big|\lesssim |k||j||\ell|^{-1-\gamma}\leq |j||\ell|^{-\gamma/2}.
\end{align*}

Combining these with the Cauchy-Schwarz inequality, we get, for any $\varepsilon>0$,
\begin{align*}
(\ref{eq3})&\leq C\sum_{j,\ell\neq 0} |\widehat{R}_j|\, |\widehat{\phi}_{\ell-j}|\, |\widehat{\phi}_\ell|\, |j|\, |\ell|^{-\gamma/2}\\
&\leq C\lVert \Lambda^{2+\varepsilon}\Theta_0\rVert_{L^2}\lVert \phi\rVert_{L^2}\lVert \Lambda^{-\gamma/2}\phi\rVert_{L^2}\\
&\leq \frac{1}{2}\lVert \phi\rVert_{L^2}^2+C\lVert \Lambda^{2+\varepsilon}\Theta_0\rVert_{L^2}^2\lVert \Lambda^{-\gamma/2}\phi\rVert_{L^2}^2,
\end{align*}
and thus
\begin{align*}
\frac{1}{2}\lVert \phi\rVert_{L^2}^2+\alpha\lVert \Lambda^{-\gamma/2}\phi\rVert_{L^2}^2\leq \lVert \Lambda^{-\gamma}A\phi\rVert_{L^2}\lVert \phi\rVert_{L^2}+C\lVert \Lambda^{2+\varepsilon}\Theta_0\rVert_{L^2}^2\lVert \Lambda^{-\gamma/2}\phi\rVert_{L^2}^2
\end{align*}
so that if $\alpha$ is chosen sufficiently large (depending on $C$ and $\lVert \Lambda^{2+\varepsilon}\Theta_0\rVert_{L^2}$), we obtain (\ref{eq2}) as desired.
\end{proof}

We now establish Theorem $\ref{thm1}$, showing that linear instability of the stationary solution $\Theta_0$ leads to its nonlinear instability with respect to perturbation in the $H^{2-\gamma}$ norm.

\begin{proof}[Proof of Theorem \ref{thm1}]
Let $\varphi$ be an eigenfunction of $L_\gamma$ associated to the eigenvalue $\mu$, and let $C_0$ be the constant
identified in the statement of Proposition $\ref{prop2}$.  Our aim is to show that there exists $c_0>0$ such that
for all sufficiently small $\varepsilon>0$ the solution $\tth$ to (\ref{eq-perturbation}) evolving from
$\tth(0,\cdot)=\varepsilon \varphi$ eventually has $H^{2-\gamma}$ norm greater than $c_0$.

Fix $\varepsilon>0$, and let $\tth$ be the solution to (\ref{eq-perturbation}) with initial data
$\tth(0)=\varepsilon\varphi$.  Provided $\varepsilon$ is chosen small enough, the local theory for
(\ref{eq1-supercritical}) ensures that $\tth$ is defined at least locally in time.  Since $f\in H^3(\bT^2)$, by
using a bootstrap argument which is similar to the proof of Proposition \ref{prop2}, we have
$\Theta_0\in H^{3+\gamma}(\bT^2)$ and $\varphi\in H^{2+\gamma}(\bT^2)$.

If there exists $t_0>0$ with $\lVert \tth(t_0)\rVert_{H^{2-\gamma}}>C_0$, then we are done, provided our choice
of the constant $c_0$ satisfies $c_0<C_0$.  We may therefore suppose that \eqref{eq12.10} holds.
In view of Proposition $\ref{prop2}$, we have \eqref{eq12.11}.  This combined with the local theory gives
\begin{equation}
\label{eq1.29}
\sup_{t>0}\|\tth(t,\cdot)\|_{H^{2-2\gamma/3}}\le C,
\end{equation}
where $C$ is independent of $\varepsilon$.

By the Duhamel formula, we get
\begin{align}
\nonumber \tth(t)&=\varepsilon e^{tL}\varphi+\int_0^t e^{(t-s)L}N(\tth)(s)ds\\
&=\varepsilon e^{tL}\varphi+\int_0^t e^{(\lambda+\delta)(t-s)}e^{(t-s)L_{\gamma,\delta}}N(\tth)(s)ds\label{eq-duhamel-supercritical}
\end{align}
for all $t\geq 0$.

Now we fix parameters $\sigma\in (0,1)$ and $A>\lVert \varphi\rVert_{L^2}$ to be specified later.  Define $0<T\leq +\infty$ by setting $$T:=\sup\bigg\{\tau>0:\lVert \tth(t)\rVert_{L^2}\leq A\varepsilon e^{\lambda t}\,\, \textrm{for all}\,\, 0\leq t\leq \tau\bigg\},$$ observing that the set inside the supremum is nonempty as a consequence of the continuity of $t\mapsto \tth(t)$ in $L^2$ and the lower bound on the choice of the parameter $A$.

The Minkowski inequality then gives
\begin{align}
\nonumber \varepsilon\lVert e^{tL}\varphi\rVert_{L^2}&\leq \lVert \tth(t)\rVert_{L^2}+\int_0^t e^{(\lambda+\delta)(t-s)}\lVert e^{(t-s)L_{\gamma,\delta}}N(\tth)(s)\rVert_{L^2}\, ds,
\end{align}
so that by Lemma $\ref{lemma-ldelta-supercritical}$ and the definition (\ref{def-n}) of $N(\tth)$, we have
\begin{align*}
%\nonumber
\lVert \tth(t)\rVert_{L^2}&\geq \varepsilon\lVert e^{tL}\varphi\rVert_{L^2}-CB(t;\tth)\\
&=\varepsilon e^{\lambda t}\lVert \varphi\rVert_{L^2}-CB(t;\tth),\quad \textrm{for}\quad 0\leq t\leq T,%\label{eq-B-supercritical}
\end{align*}
where we set
\begin{align}
B(t;\tth):=\int_0^t \frac{e^{(\lambda+\delta)(t-s)}}{(t-s)^{\sigma}}\lVert \tu(s)\cdot \nabla\tth(s)\rVert_{L^2}^{1-\sigma}\lVert \Lambda^{-\gamma}[\tu(s)\cdot\nabla\tth(s)]\rVert_{L^2}^\sigma\, ds\label{def-B-supercritical}
\end{align}
and
\begin{align*}
\tu:=R^\perp\tth.
\end{align*}

Fix $t\in [0,T]$, $s\in (0,t)$, and write $\tu=\tu(s)$, $\nabla\tth=\nabla\tth(s)$.  Then, by H\"older's inequality, the boundedness of Riesz transforms in $L^p$, and the Gagliardo-Nirenberg inequality
\begin{align*}
\|\tth\|_{L^{6/(3-2\gamma)}}&\lesssim \|\tth\|_{\dot{H}^{2\gamma/3}}\lesssim \|\tth\|_{L^2}^{\frac{3-2\gamma}{3-\gamma}}\|\tth\|_{\dot{H}^{2-2\gamma/3}}^{\frac{\gamma}{3-\gamma}},
\end{align*}
we obtain
\begin{align*}
\|\tu\cdot\nabla\tth\|_{L^2}&\le \|\tu\|_{L^{6/(3-2\gamma)}}\|\nabla\tth\|_{L^{3/\gamma}}\\
&\le C\|\tth\|_{L^{6/(3-2\gamma)}}\|\tth\|_{\dot{H}^{2-2\gamma/3}}\\
&\le C\|\tth\|_{L^{2}}^{(3-2\gamma)/(3-\gamma)}\|\tth\|_{\dot H^{2-2\gamma/3}}^{3/(3-\gamma)}
\end{align*}
Similarly, by the fractional Leibniz rule, and the boundedness of Riesz transforms in $L^p$,
we get
\begin{align*}
\|\Lambda^{-\gamma}(\tu\cdot\nabla\tth)\|_{L^2}&=\|\Lambda^{-\gamma}\divop (\tu \tth)\|_{L^2}\\
&\le C\|\Lambda^{1-\gamma}(\tu \tth)\|_{L^2}\\
&\le C\|\tth\|_{L^4}\|\Lambda^{1-\gamma}\tth\|_{L^{4}}\\
&\le C\|\tth\|_{L^2}^{\frac{6-\gamma}{6-2\gamma}}\|\tth\|_{\dot H^{2-2\gamma/3}}^{\frac{6-3\gamma}{6-2\gamma}},
\end{align*}
where we used the Gagliardo-Nirenberg inequalities
\begin{align*}
\|\tth\|_{L^{4}}&\lesssim \|\tth\|_{\dot{H}^{1/2}}\lesssim \|\tth\|_{L^2}^{\frac{9-4\gamma}{12-4\gamma}}\|\tth\|_{\dot{H}^{2-2\gamma/3}}^{\frac{3}{12-4\gamma}},\\
\|\Lambda^{1-\gamma}\tth\|_{L^{4}}&\lesssim \|\tth\|_{\dot{H}^{(3/2)-\gamma}}\lesssim \|\tth\|_{L^2}^{\frac{2\gamma+3}{12-4\gamma}}\|\tth\|_{\dot{H}^{2-2\gamma/3}}^{\frac{9-6\gamma}{12-4\gamma}}.
\end{align*}

We now choose $\sigma$ sufficiently close to $1$ to get
$$
\|\tu\cdot \nabla\tth\|_{L^2}^{1-\sigma}\|\Lambda^{-\gamma}(\tu\cdot \nabla\tth)\|_{L^2}^{\sigma}
\le C\|\tth\|_{L^2}^{1+\beta}\|\tth\|_{\dot H^{2-2\gamma/3}}^{1-\beta}.
$$
for some $\beta\in (0,1)$ (for instance, one can choose $\sigma=3/4$, leading to $\beta=\gamma/(24-8\gamma)$).
Combining this bound with the definition (\ref{def-B-supercritical}) of $B(t;\tth)$, imposing the condition $0<\delta<\lambda\beta/2$ on $\delta$, using \eqref{eq1.29}, and recalling that by our choice of $T$ we have $\|\tth(s)\|_{L^2}\leq A\varepsilon e^{\lambda s}$ for $0\leq s\leq t\leq T$, we get
\begin{align}
B(t;\tth)&\leq C_1 \int_0^t \frac{e^{\lambda(1+\beta)(t-s)}}{(t-s)^{\sigma}}\|\tth(s)\|_{L^2}^{1+\beta}e^{-\frac{\lambda\beta}{2}(t-s)}\, ds
\leq C_1(A\varepsilon e^{\lambda t})^{1+\beta}\label{eq-B-estimate-supercritical}
\end{align}
for $0\leq t\leq T$.
It follows that if we choose $C_2>0$ such that $t_*=\lambda^{-1}\log(C_2/\varepsilon)$ satisfies $0\leq t_*\leq T$, then we have
\begin{align}
\nonumber \lVert \tth(t_*)\rVert_{L^2}&\geq \varepsilon e^{\lambda t_*}\lVert\varphi\rVert_{L^2}-C_1(A\varepsilon e^{\lambda t_*})^{1+\beta}\\
&=C_2(\lVert\varphi\rVert_{L^2}-A^{1+\beta}C_1C_2^{\beta}).\label{eq-tt-below}
\end{align}

To choose $C_2$, we first consider the case when $T<+\infty$ and identify a lower bound on $T$.  Note that the continuity of $t\mapsto \lVert \tth(t)\rVert_{L^2}$ gives $\lVert \tth(T)\rVert_{L^2}=A\varepsilon e^{\lambda T}$.  The Duhamel formula (\ref{eq-duhamel-supercritical}) and the estimate (\ref{eq-B-estimate-supercritical}) then imply
\begin{align*}
A\varepsilon e^{\lambda T}=\lVert \tth(T)\rVert_{L^2}\leq \varepsilon e^{\lambda T}\lVert \varphi\rVert_{L^2}+B(T;\tth)\leq \varepsilon e^{\lambda T}\lVert \varphi\rVert_{L^2}+C_1(A\varepsilon e^{\lambda T})^{1+\beta}
\end{align*}
so that
\begin{align*}
e^{\lambda T}\geq \frac{1}{\varepsilon}\left(\frac{A-\lVert \varphi\rVert_{L^2}}{C_1A^{1+\beta}}\right)^{1/\beta}
\end{align*}
i.e. $$T\geq \frac{1}{\lambda}\log\left(\frac{1}{\varepsilon}\left[\frac{A-\lVert \varphi\rVert_{L^2}}{C_1A^{1+\beta}}\right]^{1/\beta}\right).$$
The above inequality also holds trivially when $T=+\infty$.
Therefore, choosing
\begin{align*}
C_2:=\left[\frac{A-\lVert \varphi\rVert_{L^2}}{C_1A^{1+\beta}}\right]^{1/\beta},
\end{align*}
we have $0\leq t_*\leq T$, and thus, in view of (\ref{eq-tt-below}),
\begin{align*}
\lVert \tth(t_*)\rVert_{L^2}
&\geq C_2(2\lVert \varphi\rVert_{L^2}-A)\\
&=\bigg(\frac{A-\lVert \varphi\rVert_{L^2}}{C_1A^{1+\beta}}\bigg)^{1/\beta}(2\lVert \varphi\rVert_{L^2}-A)
\end{align*}
in this case.  The choice $A=3\lVert \varphi\rVert_{L^2}/2$ now leads to
\begin{align*}
\lVert \tth(t_*)\rVert_{L^2}\geq \left[\frac{1}{3^{1+\beta}C_1}\right]^{1/\beta}.
\end{align*}
This completes the argument.
\end{proof}

\section{Global well-posedness for log-supercritical SQG}
%\label{sec1}

In this section, we begin our detailed study of long-time properties of the logarithmically supercritical SQG evolution (\ref{eq1-log}), with nonlocal diffusion given by the operator $\mathcal{L}$ defined in ($\ref{eq-L}$).  We begin by remarking that this operator can be expressed in several alternative ways, all equivalent up to multiplication by a fixed dimensional constant.  In particular, we make note of the convolution representation
\begin{align}
(\mathcal{L}\theta)(x)=\int_{\mathbb{R}^2} (\theta(x)-\theta(x+y))K(y)dy,\label{eqK}
\end{align}
where the integral is interpreted in the principal value sense, and with kernel $K$ satisfying
\begin{align*}
|K(y)|\lesssim |y|^{-3}\log^{-a}(\kappa+|y|^{-1}),
\end{align*}
\begin{align*}
|(\nabla K)(y)|\lesssim |y|^{-4}\log^{-a}(\kappa+|y|^{-1}),
\end{align*}
for $y\in\mathbb{R}^2\setminus\{0\}$, and
\begin{align*}
K(y)\gtrsim |y|^{-3}\log^{-a}(|y|^{-1})
\end{align*}
for $|y|\in (0,r_0)$, where $r_0>0$ is a small constant depending only on $a$. See, for instance, \cite{DKSV}.

Note that smooth solutions  of (\ref{eq1-log}) satisfy a number of a priori bounds (and thus, via an approximation argument, the same is true for the local solutions constructed in Proposition \ref{prop-local}).  In particular,
\begin{align*}
\sup_{t\in [0,T)}\lVert \theta(t)\rVert_{L^\infty}\leq M_{\theta,f}:=C(\lVert \theta_0\rVert_{L^\infty}+\lVert f\rVert_{L^\infty})%\label{eq-M}
\end{align*}
See also Lemma $5.4$ in \cite{DKSV}, where Fourier transform considerations are used to show that bounds of similar type hold for diffusion given by a suitably chosen Fourier multiplier, even when the associated kernel is not necessarily positive.

Our presentation of the global existence theory is motivated by the method of nonlinear lower bounds formulated in \cite{CV}.  This approach was used to study the forced critical SQG in \cite{CTV}, and developed in a supercritical context (with an additional decay factor which allows to exploit ``eventual regularization'' properties of the equation) in \cite{CZV}.  Some results related to this approach were also obtained in \cite{SGS,MX,XY}.

Fix $\xi_0$ and let $\xi:[0,\infty)\rightarrow [0,\infty)$ be a decreasing function with $\xi(0)=\xi_0$.  Both $\xi_0$ and $\xi$ will be specified further later in the argument (see Section $1.2$ below).  For $(t,x)\in [0,T]\times \mathbb{T}^2$ and $h\in \mathbb{T}^2$, define
\begin{align}
v(t,x;h)&:=(\xi(t)^2+|h|^2)^{-\alpha/2}(\theta(t,x+h)-\theta(t,x))\label{eq-def-v}\\
F(t,x;h)&:=(\xi(t)^2+|h|^2)^{-\alpha/2}(f(x+h)-f(x)),\nonumber %\label{eq-def-F}
\end{align}
where $\alpha\in (0,1)$ is a small constant to be specified.

\subsection*{Equation for $v^2$ and associated bounds}

To identify a suitable equation for $v$ (in fact, an equation for $v^2$), we first recall a pointwise identity for $\mathcal{L}$,
\begin{align}
\mathcal{L}(g^2)=2g\mathcal{L}g-c\int_{\mathbb{R}^2} (g(x)-g(x+y))^2K(y) \ dy,\label{ptwise-L}
\end{align}
for all $g\in C^\infty(\mathbb{T}^2)$, where $K$ is the convolution kernel associated to the representation (\ref{eqK}), and where the integral is interpreted in the principal value sense.

Now, setting
\begin{align}
w(t,x;h)&:=(R^\perp\theta)(t,x+h)-(R^\perp\theta)(t,x)\label{eq-def-w}
\end{align}
and
\begin{align*}
D_h[v(t)]&:=(2\pi)^{-1}\int_{\mathbb{R}^2} (v(t,x;h)-v(t,x+y;h))^2K(y) \ dy,
\end{align*}
and using the identity (\ref{ptwise-L}), we obtain that $v^2$ solves
\begin{align}
\nonumber &\partial_t v^2(t,x;h)+(R^\perp\theta)(t,x)\cdot \nabla_xv^2(t,x;h)\\
\nonumber &\hspace{0.2in}+w(t,x;h)\cdot\nabla_h v^2(t,x;h)+\mathcal{L}v^2+D_h[v(t)]\\
&\hspace{0.4in}=-\frac{2\alpha \xi(t)\xi'(t)}{\xi(t)^2+|h|^2}v^2(t,x;h)+\frac{2\alpha h\cdot w(t,x;h)}{\xi(t)^2+|h|^2}v^2(t,x;h)+2v(t,x;h)F(t,x;h).\label{eq-v}
\end{align}

\begin{lemma}
\label{lem-lower-bd}
For each $a\in (0,1/2)$, there exist constants $c>0$ and $C>0$ such that if $\theta$ is a smooth solution to (\ref{eq1-log}), with $v$ and $w$ defined as in (\ref{eq-def-v}) and (\ref{eq-def-w}), then for every $(t,x)\in [0,T]\times\mathbb{T}^2$ and $h\in\mathbb{R}^2$, one has the estimates
\begin{align}
\nonumber |w(t,x;h)|&\leq C_a(\xi(t)^2+|h|^2)^{\alpha/2}(R\log^a(\kappa+R^{-1}))^{1/2}(D_h[v(t)])^{1/2}\\
&\hspace{0.2in}+C\lVert v\rVert_{L_{t,x,h}^\infty}|h|(\xi(t)^\alpha/R+R^{-(1-\alpha)}),\label{eq-lower-bd-1}
\end{align}
and
\begin{align}
\nonumber (D_h[v(t)])(x)&\geq \frac{c|v(t,x;h)|^2}{R\log^a(\kappa+R^{-1})}
-\frac{C\|v\|_{L^\infty}^2}{r_0\log^a(\kappa+r_0^{-1})}\\
&\hspace{0.4in}-\frac{C\lVert v\rVert_{L^\infty}|h|\,|v(t,x;h)|}{(\xi(t)^2+|h|^2)^{\alpha/2}}(\xi(t)^\alpha/R^2+R^{-(2-\alpha)})\label{eq-lower-bd-2}
\end{align}
for all $R>0$ with $R\geq 4|h|$.
\end{lemma}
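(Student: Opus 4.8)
The plan is to establish both inequalities by writing $w(t,x;h)$ and the Dirichlet form $D_h[v(t)]$ in terms of the kernel $K$ and then splitting the relevant integrals into near and far regions at the scale $R$. For the bound \eqref{eq-lower-bd-1}, I would start from the identity expressing the finite difference of $R^\perp\theta$ as a singular integral against $\nabla K$-type kernels; more precisely, one uses the representation of the Riesz transform and the fact that $w$ can be written as an integral involving the second-order finite differences of $\theta$ against the kernel. The key step is to introduce the weight $(\xi(t)^2+|h|^2)^{\alpha/2}$ so that differences of $\theta$ become differences of $v$, producing the Dirichlet form $D_h[v(t)]$ in the inner region $|y|\lesssim R$, while the tail $|y|\gtrsim R$ is controlled by $\|v\|_{L^\infty_{t,x,h}}$ together with the kernel decay $|K(y)|\lesssim |y|^{-3}\log^{-a}(\kappa+|y|^{-1})$ and $|\nabla K(y)|\lesssim |y|^{-4}\log^{-a}(\kappa+|y|^{-1})$. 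A Cauchy-Schwarz step on the inner region against the normalized measure $K(y)\,dy$ over the ball of radius $R$ gives the factor $(R\log^a(\kappa+R^{-1}))^{1/2}(D_h[v(t)])^{1/2}$, since $\int_{|y|\le R} K(y)\,dy \lesssim R\log^{-a}(\kappa+R^{-1})^{-1}$... wait, more precisely the total mass is $\lesssim R^{-1}\log^{-a}$ type — I would track the exact exponent carefully, yielding the stated form. The terms with $\xi(t)^\alpha/R$ and $R^{-(1-\alpha)}$ come from bounding the weight factor $(\xi(t)^2+|h|^2)^{-\alpha/2}$ against $\xi(t)^{-\alpha}$ and $|h|^{-\alpha}$ respectively in the far region, combined with the constraint $R\ge 4|h|$.

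For the lower bound \eqref{eq-lower-bd-2}, I would use the standard nonlinear-lower-bound trick: from the definition of $D_h[v(t)](x)$ as the principal-value integral of $(v(t,x;h)-v(t,x+y;h))^2 K(y)\,dy$, restrict attention to a well-chosen annulus or ball where $K(y)\gtrsim |y|^{-3}\log^{-a}(|y|^{-1})$ (valid for $|y|\in(0,r_0)$) and expand the square. The cross term $-2v(t,x;h)v(t,x+y;h)$ is handled by Cauchy-Schwarz: one keeps a fraction of the diagonal term $c|v(t,x;h)|^2/(R\log^a(\kappa+R^{-1}))$ by integrating $K$ over $\{|y|\le R\}$, and absorbs the $v(t,x+y;h)^2$ contribution into the $\|v\|_{L^\infty}^2$ term. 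The subtlety is that one must also quantify the error in replacing $v(t,x+y;h)$ by $v(t,x;h)$ using a finite-difference bound, which is where the last term with $|h|\,|v(t,x;h)|\,(\xi(t)^\alpha/R^2+R^{-(2-\alpha)})$ arises — it tracks the variation of the weight $(\xi(t)^2+|h|^2)^{-\alpha/2}$ in the $h$ variable over the cutoff scale. The term $-C\|v\|_{L^\infty}^2/(r_0\log^a(\kappa+r_0^{-1}))$ is the price paid for the far region $|y|\ge r_0$ where the lower bound on $K$ fails and one only has the upper bound.

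The main obstacle I expect is bookkeeping the interplay of the three small parameters $\xi(t)$, $|h|$, and $R$ (with the constraint $R\ge 4|h|$) together with the logarithmic weights, so that every error term lands in exactly the claimed form with the right powers and the right logarithmic factors. In particular, getting the inner-region Cauchy-Schwarz constant to produce precisely $(R\log^a(\kappa+R^{-1}))^{1/2}$ (and its reciprocal in the lower bound) requires care in how $\int_{|y|\le R}|K(y)|\,dy$ is estimated — one must use $\log^{-a}(\kappa+|y|^{-1})\le \log^{-a}(\kappa+R^{-1})$ for $|y|\le R$ rather than the sharp pointwise behavior. A secondary technical point is justifying the principal-value manipulations and the expansion of $(v(x;h)-v(x+y;h))^2$ under the integral, which is routine given the smoothness of $\theta$ but should be mentioned. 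Once these estimates are organized region-by-region, both \eqref{eq-lower-bd-1} and \eqref{eq-lower-bd-2} follow by collecting terms; I do not anticipate any conceptual difficulty beyond the careful tracking of constants and exponents.
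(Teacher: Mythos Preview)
Your overall near/far splitting and the Cauchy--Schwarz step in the inner region are on the right track, but there is a genuine missing idea that makes the far-region estimates fail as stated. In both \eqref{eq-lower-bd-1} and \eqref{eq-lower-bd-2}, the crucial factor of $|h|$ in the error terms does \emph{not} come from bounding the weight $(\xi^2+|h|^2)^{-\alpha/2}$ by $\xi^{-\alpha}$ or $|h|^{-\alpha}$, nor from Cauchy--Schwarz on the cross term, nor from ``variation of the weight in the $h$ variable.'' It comes from a cancellation: since $v(t,x+y;h)=(\xi^2+|h|^2)^{-\alpha/2}(\theta(x+y+h)-\theta(x+y))$, one performs the change of variables $y\mapsto y-h$ in the $\theta(x+y+h)$ piece and then applies the mean value theorem to the resulting difference $G(y-h)-G(y)$ (resp.\ $H(y-h)-H(y)$) of the far-field kernel. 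This is what produces the extra $|h|$ and the additional power of $R^{-1}$ in the error. Without this step your far-region bound for $I_2$ in \eqref{eq-lower-bd-1} would be of order $\|v\|_\infty(\xi^\alpha+|h|^\alpha)/R$, which is strictly weaker than the claimed $\|v\|_\infty|h|(\xi^\alpha/R+R^{\alpha-1})$ and would not suffice downstream. The same deficiency occurs for the cross term in \eqref{eq-lower-bd-2}: Cauchy--Schwarz alone gives a term of the same order as the main positive term and cannot be absorbed.

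A second issue: in \eqref{eq-lower-bd-2} you have the regions backwards. The positive contribution $c|v|^2/(R\log^a(\kappa+R^{-1}))$ arises from restricting to the \emph{far} region $|y|\geq R$ (where $\int_{|y|\geq R}K(y)\,dy\sim R^{-1}\log^{-a}(\kappa+R^{-1})$ is finite), not from $\{|y|\leq R\}$, over which $\int K$ diverges. One expands $(v(x;h)-v(x+y;h))^2$ on $\{|y|\geq R\}$, drops the nonnegative $v(x+y;h)^2$ term, keeps the diagonal, and controls the cross term by the change-of-variables/MVT trick above. The constant term $-C\|v\|_{L^\infty}^2/(r_0\log^a(\kappa+r_0^{-1}))$ compensates for the fact that the pointwise lower bound on $K$ is only available on $(0,r_0)$.
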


\begin{proof}
Let $\chi\in C^\infty([0,\infty);[0,\infty))$ be a fixed cutoff function, decreasing on $[0,\infty)$, and satisfying $\chi(x)=1$ for $x\in [0,1]$, $\supp\chi\subset [0,2)$, and $\lVert \chi'\rVert_{L^\infty}\leq 2$.  For $R>0$, set $\chi_R(y)=\chi(|y|/R)$.

We begin by showing (\ref{eq-lower-bd-1}).  For this, we write
\begin{align*}
w(t,x;h)&=\int_{\mathbb{R}^2} \frac{y^\perp}{|y|^3}\Big[\theta(t,x+y+h)-\theta(t,x+y)\Big] \ dy\\
&=(\xi(t)^2+|h|^2)^{\alpha/2}\int_{\mathbb{R}^2} \frac{y^\perp}{|y|^3}\Big[v(t,x+y;h)-v(t,x;h)\Big] \ dy\\
&=(\xi(t)^2+|h|^2)^{\alpha/2}\int_{\mathbb{R}^2} \frac{y^\perp}{|y|^3}\Big[v(t,x+y;h)-v(t,x;h)\Big]\chi_R(y) \ dy\\
&\hspace{0.2in}+(\xi(t)^2+|h|^2)^{\alpha/2}\int_{\mathbb{R}^2} \frac{y^\perp}{|y|^3}\Big[v(t,x+y;h)-v(t,x;h)\Big](1-\chi_R(y)) \ dy\\
&=:I_1+I_2,
\end{align*}
where we exploited the cancellation properties of this singular integral (via the odd symmetry of the kernel $y^\perp/|y|^3$).  We then obtain
\begin{align*}
&\int_{\mathbb{R}^2} \frac{y^\perp}{|y|^3}\Big[v(t,x+y;h)-v(t,x;h)\Big]\chi_R(y) \ dy\\
&\hspace{0.2in}\lesssim \left(\int_{0}^R \log^a(\kappa+r^{-1}) \ dr\right)^{1/2}(D_h[v(t)])^{1/2},
\end{align*}
so that, since
\begin{align*}
\int_0^R \log^a(\kappa+r^{-1}) \ dr\leq \frac{1}{\log^{1-a}(\kappa+R^{-1})}\int_0^R \log(\kappa+r^{-1}) \ dr\lesssim R\log^a(\kappa+R^{-1}),
\end{align*}
we have
\begin{align}
|I_1|&\leq C_a(\xi(t)^2+|h|^2)^{\alpha/2}(R\log^a(\kappa+R^{-1}))^{1/2}(D_h[v(t)])^{1/2}.\label{eq-w-bound-1}
\end{align}
On the other hand, using the change of variables $y\mapsto y-h$ and setting $G(y):=\frac{y^\perp}{|y|^3}(1-\chi_R(y))$ for $y\in\mathbb{R}^2$,
\begin{align*}
I_2=\int_{\mathbb{R}^2} (G(y-h)-G(y))(\xi(t)^2+|y|^2)^{\alpha/2}v(t,x;y) \ dy.%\label{eq-theta-1-R}
\end{align*}
We now invoke the mean value theorem to estimate
\begin{align*}
|G(y-h)-G(y)|&\leq |h|\sup_{z\in \{y-sh:s\in [0,1]\}} |\nabla G(z)|\\
&\lesssim |h|\sup_{s\in [0,1]} |y-sh|^{-3}\chi_{\{y:|y-sh|\geq R\}}(y).
\end{align*}

Since $R$ satisfies $R\geq 4|h|$ (so that for $0\leq s\leq 1$, $\{y:|y-sh|\geq R\}$ nonempty implies $|y|\geq 3R/4$), we obtain
\begin{align}
\nonumber |I_2|&\lesssim |h|\int_{|y|\geq 3R/4} \frac{(\xi(t)^2+|y|^2)^{\alpha/2}}{|y|^3}|v(t,x;y)| \ dy\\
&\lesssim \lVert v\rVert_{L_{t,x,h}^\infty}|h|(\xi(t)^\alpha R^{-1}+R^{-1+\alpha}).\label{eq-w-bound-2}
\end{align}
Imposing this assumption on $R$, the desired inequality (\ref{eq-lower-bd-1}) now follows by combining (\ref{eq-w-bound-1}) and (\ref{eq-w-bound-2}).

It remains to show (\ref{eq-lower-bd-2}), for which we use a similar argument.  Fix $R>0$, and note that
\begin{align}
\nonumber &\frac{\|v\|_{L^\infty}^2}{r_0\log^a(\kappa+|r_0|^{-1})} + (D_h[v(t)])(x)\\
\nonumber &\gtrsim \int_{\mathbb{R}^2} \frac{(v(t,x;h)-v(t,x+y;h))^2}{|y|^{3}\log^{a}(\kappa+|y|^{-1})} \ dy\\
\nonumber &\gtrsim \int_{\mathbb{R}^2} \frac{(v(t,x;h)-v(t,x+y;h))^2}{|y|^3\log^a(\kappa+|y|^{-1})}\bigg(1-\chi_R(y)\bigg) \ dy\\
\nonumber &\gtrsim v(t,x;h)^2\int_{2R}^\infty \frac{1}{r^2\log^a(\kappa+r^{-1})} \ dr,\\
&\hspace{0.2in}-2|v(t,x;h)|\,\bigg|\int_{\mathbb{R}^2} \frac{v(t,x+y;h)}{|y|^3\log^a(\kappa+|y|^{-1})}\bigg(1-\chi_R(y)\bigg) \ dy\bigg|.\label{eq-lower-bound-3}
\end{align}

Using the change of variables $y\mapsto y-h$ as before (and recalling the definition of $v$ given by (\ref{eq-def-v})), the second term appearing on the right-hand side of (\ref{eq-lower-bound-3}) is bounded from above by a multiple of
\begin{align}
\nonumber &\frac{|v|}{(\xi(t)^2+|h|^2)^{\alpha/2}}\bigg|\int_{\mathbb{R}^2} \theta(t,x+y)(H(y-h)-H(y)) \ dy\bigg|\\
\nonumber &\hspace{0.2in}=\frac{|v|}{(\xi(t)^2+|h|^2)^{\alpha/2}}\bigg|\int_{\mathbb{R}^2} (\theta(t,x+y)-\theta(t,x))(H(y-h)-H(y)) \ dy\bigg|\\
\nonumber &\hspace{0.2in}=\frac{|v|}{(\xi(t)^2+|h|^2)^{\alpha/2}}\bigg|\int_{\mathbb{R}^2} v(t,x;y)(\xi(t)^2+|y|^2)^{\alpha/2}(H(y-h)-H(y)) \ dy\bigg|\\
&\hspace{0.2in}\leq \frac{|v|\,\lVert v(t)\rVert_{L^\infty}}{(\xi(t)^2+|h|^2)^{\alpha/2}}\int_{\mathbb{R}^2} (\xi(t)^2+|y|^2)^{\alpha/2}|H(y-h)-H(y)| \ dy,\label{eq-rhs1}
\end{align}
where we set $H(y):=\frac{1}{|y|^3\log^a(\kappa+|y|^{-1})}(1-\chi_R(y))$ for $y\in\mathbb{R}^2$.  Again invoking
the mean value theorem to estimate
$$|H(y-h)-H(y)|\leq |h|\sup_{z\in\{y-sh:s\in [0,1]\}} |\nabla H(z)|,$$
we bound the right-hand side of (\ref{eq-rhs1}) by
\begin{align}
\nonumber &\frac{|h|\, |v|\, \lVert v(t)\rVert_{L^\infty}}{(\xi(t)^2+|h|^2)^{\alpha/2}}\int_{\mathbb{R}^2} (\xi(t)^2+|y|^2)^{\alpha/2}\sup_{s\in [0,1]}\bigg(\frac{\chi_{\{y:R\leq |y|\leq 2R\}}(y-sh)}{R|y-sh|^3\log^a(\kappa+|y-sh|^{-1})}\\
\nonumber &\hspace{2.2in}+ \frac{\chi_{\{y:|y|\geq R\}}(y-sh)}{|y-sh|^4\log^a(\kappa+|y-sh|^{-1})} \bigg) \ dy\\
\nonumber &\lesssim \frac{|h|\, |v|\, \lVert v(t)\rVert_{L^\infty}}{(\xi(t)^2+|h|^2)^{\alpha/2}}\bigg(\int_{3R/4}^{9R/4} \frac{(\xi(t)^2+r^2)^{\alpha/2}}{Rr^2\log^a(\kappa+r^{-1})} \ dr+\int_{3R/4}^{\infty}\frac{(\xi(t)^2+r^2)^{\alpha/2}}{r^3\log^a(\kappa+r^{-1})} \ dr\bigg)\\
\nonumber &\lesssim |h|\, |v|\, \lVert v\rVert_{L^\infty}\, (\xi(t)^2+|h|^2)^{-\alpha/2}(\xi(t)^\alpha/R^2+R^{-(2-\alpha)}).
\end{align}
Assembling these estimates completes the proof of the desired bound (\ref{eq-lower-bd-2}).
\end{proof}

We next elaborate on the lower bound ($\ref{eq-lower-bd-2}$) for $D_h[v(t)]$, by making an appropriate choice of $R\geq 4|h|$.  With $C,c$ as in the statement of Lemma $\ref{lem-lower-bd}$, choosing $$R=\bigg(\frac{4C\lVert v\rVert_{L^\infty}}{c|v(t,x;h)|}\bigg)^{1/(1-\alpha)}|h|\log^{a/(1-\alpha)}(\kappa+(4|h|)^{-1})$$ (possibly increasing $C$ to ensure $(4C/c)^{1/(1-\alpha)}\geq 4$ and thus $R\geq 4|h|$), we have
\begin{align*}
\frac{\xi(t)^\alpha}{R}&=\left(\frac{c^{1/(1-\alpha)}\xi(t)^{\alpha}}{(4C)^{1/(1-\alpha)}|h|\log^{a/(1-\alpha)}(\kappa+(4|h|)^{-1})}\right)\cdot\left(\frac{|v|}{\lVert v\rVert_{L^\infty}}\right)^{1/(1-\alpha)}\\
&\leq \frac{c\xi(t)^{\alpha}}{4C|h|\log^a(\kappa+(4|h|)^{-1})}\cdot \frac{|v|}{\lVert v\rVert_{L^\infty}}\\
&\leq \frac{c(\xi(t)^2+|h|^2)^{\alpha/2}}{4C|h|\log^a(\kappa+(4|h|)^{-1})}\cdot \frac{|v|}{\lVert v\rVert_{L^\infty}},
\end{align*}
along with
\begin{align*}
\frac{1}{R^{1-\alpha}}&=\frac{c}{4C|h|^{1-\alpha}\log^{a}(\kappa+(4|h|)^{-1})}\cdot\left(\frac{|v|}{\lVert v\rVert_{L^\infty}}\right)\\
&\leq\frac{c|h|^{\alpha}}{4C|h|\log^{a}(\kappa+(4|h|)^{-1})}\cdot\left(\frac{|v|}{\lVert v\rVert_{L^\infty}}\right)\\
&\leq\frac{c(\xi(t)^2+|h|^2)^{\alpha/2}}{4C|h|\log^{a}(\kappa+(4|h|)^{-1})}\cdot\left(\frac{|v|}{\lVert v\rVert_{L^\infty}}\right),
\end{align*}
and thus
\begin{align*}
\frac{C\lVert v\rVert_{L^\infty}|h|\,|v(t,x;h)|}{R(\xi(t)^2+|h|^2)^{\alpha/2}}(\xi(t)^\alpha/R+R^{-(1-\alpha)})\leq \frac{c|v(t,x;h)|^2}{2R\log^a(\kappa+(4|h|)^{-1})}.
\end{align*}

It then follows that the bound (\ref{eq-lower-bd-2}) gives
\begin{align*}
(D_h[v(t)])(x)
&\geq \frac{c|v(t,x;h)|^2}{2R\log^a(\kappa+(4|h|)^{-1})}
-\frac{C\|v\|_{L^\infty}^2}{r_0\log^a(\kappa+r_0^{-1})},
\end{align*}
which in turn implies
\begin{align}
(D_h[v(t)])(x)&\geq c_1\bigg(\frac{ |v(t,x;h)| }{ \lVert v\rVert_{L^\infty} }\bigg)^{1/(1-\alpha)}\frac{|v(t,x;h)|^2}{|h|\log^{a(2-\alpha)/(1-\alpha)}(\kappa+(4|h|)^{-1})}\label{eq-lower-bd-3}\nonumber\\
&\quad -\frac{C\|v\|_{L^\infty}^2}{r_0\log^a(\kappa+r_0^{-1})}
\end{align}
for some $c_1>0$ depending only on $a$.

\subsection*{A differential inequality for $\xi$}

Fix $\alpha\in (0,1/2)$, and let $\varphi:[0,\infty)\rightarrow [0,\infty)$ be defined by
\begin{align*}
\varphi(x)=\int_0^x \log^{a\frac{2-\alpha}{1-\alpha}}(\kappa+(4s)^{-1})ds.
\end{align*}
Then $\varphi$ is continuous and strictly increasing on $[0,\infty)$, with $\varphi(0)=0$ and $$\lim_{x\rightarrow\infty} \varphi(x)=+\infty,$$ and so has a well-defined inverse $\varphi^{-1}$.

Now, fix $c_0>0$ (to be determined later in the argument, depending only on $a$) and $\xi_0>0$, set $$T_*:=4\alpha\varphi(\xi_0)/c_0,$$ and define $\xi:[0,\infty)\rightarrow [0,\infty)$ by
\begin{align*}
\xi(t)=\varphi^{-1}\left(\varphi(\xi_0)-\frac{c_0}{4\alpha}t\right)
\end{align*}
for $0\leq t<T_*$, and $\xi(t)=0$ for $t\geq T_*$.

Then $\xi$ is a decreasing function with $\xi(0)=\xi_0$ which solves the differential inequality
\begin{align*}
|\xi'|\leq \frac{c_0}{4\alpha\log^{a\frac{2-\alpha}{1-\alpha}}(\kappa+(4|\xi|)^{-1})}.
\end{align*}
These properties will play an important role in the arguments below.

\subsection*{The main a priori bound: preserving $C^\alpha$ estimates}

We now state and prove the main global a priori estimate, which is a bound on the H\"older seminorm of smooth solutions.  We first record an elementary inequality related to the logarithmic lower bound in (\ref{eq-lower-bd-3}).

\begin{lemma}
\label{lem-alm-mon}
There exists $C>0$ such that for every $\gamma>0$ and $s,t\in (0,\infty)$ with $s<t$, one has
\begin{align*}
s^{\gamma}\log(\kappa+(4s)^{-1})\leq (C/\gamma)t^\gamma\log(\kappa+(4t)^{-1}).
\end{align*}
\end{lemma}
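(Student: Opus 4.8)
The plan is to reduce the claimed inequality to the elementary monotonicity of a single auxiliary function of one variable. Observe that $s < t$ means we want to compare $g(s)$ with $g(t)$ where $g(r) = r^\gamma \log(\kappa + (4r)^{-1})$. Since $\log(\kappa + (4r)^{-1})$ is decreasing in $r$ while $r^\gamma$ is increasing, $g$ itself is not monotone, so a direct comparison fails; the point of the $C/\gamma$ factor on the right is exactly to absorb the non-monotonicity. First I would split into two regimes according to whether $r$ is small (so that $\log(\kappa + (4r)^{-1}) \sim \log(r^{-1})$) or $r$ is bounded below (so that $\log(\kappa + (4r)^{-1})$ is comparable to a fixed constant and $g(r) \sim r^\gamma$ is increasing, making the inequality trivial with $C \geq 1$). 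The only real work is in the small-$r$ regime.

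In the small-$r$ regime the strategy is to show that $h(r) := r^\gamma \log(\kappa + (4r)^{-1})$, or more precisely a comparable surrogate, satisfies $h(s) \leq (C/\gamma) h(t)$ for $s < t$. The cleanest route: for $r \in (0, r_1)$ with $r_1$ a fixed small constant, $\log(\kappa + (4r)^{-1}) \leq C \log(1/r)$, and I claim that $\sup_{0 < r \leq t} r^\gamma \log(1/r) \leq (C/\gamma)\, t^\gamma \log(\kappa + (4t)^{-1})$ whenever $t$ is small. Indeed, the function $r \mapsto r^\gamma \log(1/r)$ on $(0,\infty)$ has its maximum at $r = e^{-1/\gamma}$, with maximal value $(e\gamma)^{-1}$. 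If $t \geq e^{-1/\gamma}$, then $g$ is increasing beyond the critical point is false — rather $g$ is \emph{decreasing} for $r > e^{-1/\gamma}$ — so one compares $g(t)$ against the right side directly; if $t \leq e^{-1/\gamma}$, then $g$ is increasing on $(0,t]$, so $\sup_{r \leq t} g(r) = g(t)$ and there is nothing to prove. In either case the worst case is $\sup_{r\leq t} g(r) \leq \max\{(e\gamma)^{-1}, g(t)\}$, and one checks $(e\gamma)^{-1} \leq (C/\gamma) t^\gamma \log(\kappa + (4t)^{-1})$ holds for $t$ bounded below by absorbing constants, while for $t$ small, $t^\gamma \log(\kappa+(4t)^{-1}) \geq c\, t^\gamma \log(1/t)$, and since $s^\gamma \log(1/s) \le (e\gamma)^{-1}$ always, one must be slightly more careful and instead argue pointwise: for $s < t \le e^{-1/\gamma}$, $s^\gamma \log(1/s) \le t^\gamma \log(1/t)$ since $g$ is increasing there, giving the bound with $C$ absorbing only the $\log(1/r)$ vs $\log(\kappa + (4r)^{-1})$ comparison, no $1/\gamma$ needed; the $1/\gamma$ is only consumed in bridging the case $s \le e^{-1/\gamma} < t$, where $g(s) \le (e\gamma)^{-1}$ and $g(t) = t^\gamma \log(1/t)$ with $t^\gamma \geq e^{-1}$ so $g(t) \ge e^{-1}\log(1/t)$, which can be small, hence one instead bounds $g(s) \le (e\gamma)^{-1} \le (C/\gamma) g(t)$ using $g(t) \geq g(e^{-1/\gamma})$ is false again — so the correct move is to note $g$ on $(0,\infty)$ satisfies $g(s) \le (e\gamma)^{-1}$ and $g(t) \ge$ [its value], and since we also may freely enlarge $C$, the bound $g(s) \le (e\gamma)^{-1} \le (C/\gamma)\cdot g(t)$ requires $g(t) \ge c > 0$, which fails for $t$ near the endpoints; the resolution is that when $t > e^{-1/\gamma}$ we compare instead on the decreasing branch and when $t \le e^{-1/\gamma}$ we use monotonicity on $(0,t]$, so the mixed case never actually requires a uniform positive lower bound on $g(t)$.

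I expect the main obstacle to be precisely this bookkeeping around the critical point $r = e^{-1/\gamma}$ and making the constant $C$ genuinely independent of $\gamma$ (with all $\gamma$-dependence isolated in the explicit $1/\gamma$ factor). Concretely, one wants: on $(0, e^{-1/\gamma}]$, $g$ is nondecreasing, so $s < t$ in that range gives $g(s) \le g(t)$ immediately; on $[e^{-1/\gamma}, \infty)$, $g$ is nonincreasing, so if both $s,t$ lie there the inequality is even easier; and in the straddling case $s \le e^{-1/\gamma} \le t$, one has $g(s) \le g(e^{-1/\gamma}) = (e\gamma)^{-1}$ and $g(t) \le g(e^{-1/\gamma}) = (e\gamma)^{-1}$ as well, but we need a \emph{lower} bound on $g(t)$; here the honest statement is that we cannot bound $g(s)$ by $g(t)$ alone, but the lemma's intended use (in bounding the logarithmic term in \eqref{eq-lower-bd-3}) only ever applies it with $t$ of the same order as $s$ up to the evolution of $\xi$, so in fact the relevant regime keeps $t$ comparable to $s$ — nonetheless, to prove the lemma as literally stated, one replaces $\log(\kappa+(4s)^{-1})$ by the larger quantity and uses that $r \mapsto r^\gamma$ is increasing together with $\log(\kappa + (4t)^{-1}) \geq \log \kappa \geq 1$, yielding $s^\gamma \log(\kappa+(4s)^{-1}) \le s^\gamma \cdot C\log(\kappa + (4s)^{-1})$ and then the elementary bound $x^\gamma \log(\kappa + (4x)^{-1}) \le (C/\gamma)$ uniformly — wait, that is false for large $x$. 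The clean fix I would actually write down: prove $\frac{d}{dr}\log g(r) = \frac{\gamma}{r} - \frac{1}{r(4\kappa r + 1)\log(\kappa + (4r)^{-1})}$, note the negative term is $O(1/r)$ near $0$ but with a $1/\log$ gain, deduce $g$ is eventually increasing past any fixed point once $\log(\kappa+(4r)^{-1}) \gtrsim 1/\gamma$, i.e. for $r \le r_\gamma$ with $\log(\kappa + (4r_\gamma)^{-1}) \sim 1/\gamma$, and handle $r \ge r_\gamma$ by the plain monotonicity of $r^\gamma$ against the bounded log; the constant $C$ then emerges from $\log(\kappa + (4r)^{-1}) \le (C/\gamma)\log(\kappa + (4r_\gamma)^{-1}) \cdot (r/r_\gamma)^{\gamma}$-type estimates, all elementary once the critical scale $r_\gamma$ is correctly identified.
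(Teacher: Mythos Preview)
The paper states this lemma without proof, calling it an ``elementary inequality,'' so there is no argument of the paper's to compare against.

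Your proposal has the right skeleton---study $g(r)=r^\gamma\log(\kappa+(4r)^{-1})$, locate its critical points, and split according to whether $s,t$ lie on the increasing or decreasing branch---but the execution is muddled, and one key observation is mishandled. You repeatedly worry that in the straddling case $s\le r_\gamma\le t$ one cannot get a uniform positive lower bound on $g(t)$; in fact this is exactly where the hypothesis $\kappa\ge e$ enters. Since $\log(\kappa+(4r)^{-1})\ge\log\kappa\ge 1$ for \emph{every} $r>0$, one has $g(t)\ge t^\gamma$ always, and hence $g(t)\ge e^{-1}$ whenever $t$ lies at or beyond the critical scale (any critical point $r^*$ satisfies $\log(\kappa+(4r^*)^{-1})<1/\gamma$, forcing $r^*>\tfrac14 e^{-1/\gamma}$, so $(r^*)^\gamma\ge e^{-1}/4$). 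You actually write down $\log(\kappa+(4t)^{-1})\ge\log\kappa\ge 1$ at one point but then abandon it for a dead end. Pairing this lower bound with the upper bound at the local maximum---the critical-point relation $\gamma\log(\kappa+(4r_1)^{-1})=1/(4\kappa r_1+1)<1$ gives $\log(\kappa+(4r_1)^{-1})<1/\gamma$, hence $g(r_1)\le C/\gamma$---immediately yields $g(s)/g(t)\le g(r_1)/g(r_2)\le C/\gamma$ in every non-monotone case.

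A second inaccuracy: your final paragraph claims that for $r\ge r_\gamma$ one can use ``plain monotonicity of $r^\gamma$ against the bounded log.'' The logarithm is \emph{not} bounded by an absolute constant there---near $r_\gamma$ it is of order $1/\gamma$. What is true is that on $[r_\gamma,\infty)$ the log lies between $1$ and $C/\gamma$, and it is precisely this ratio (not boundedness) that supplies the factor $C/\gamma$ in the conclusion.
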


We next state and prove the a priori H\"older bound.

\begin{proposition}
\label{prop4}
Let $0<a<1/2$ be given.  Then there exists $X_0=X_0(a)\in (0,1)$ such that the following statement holds.

For each $M_{\theta,f}>0$ and $\xi_0\in (0,X_0)$, there exists $\alpha_0=\alpha_0(a,M_{\theta,f},\xi_0)\in (0,1/2)$ such that if $f\in C^\infty(\mathbb{T}^2)$ satisfies $$\lVert \theta_0\rVert_{L^\infty}+\lVert f\rVert_{L^\infty}\leq M_{\theta,f},$$
and $\theta$ is a smooth solution to (\ref{eq1-log}) on a time interval $[0,T_*]$, $T_*=4\alpha\varphi(\xi_0)/c_0$, with $\varphi$ as defined in Section 1.2,
then
$$|\theta(t)|_{C^\alpha}\lesssim C(M_{\theta,f},\xi_0)$$
for $t\geq T_*$.
\end{proposition}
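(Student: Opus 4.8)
The plan is to establish the stated $C^\alpha$ bound via a nonlinear maximum principle for the quantity $v$ from (\ref{eq-def-v}), using the equation (\ref{eq-v}) it satisfies together with the nonlinear lower bound (\ref{eq-lower-bd-3}) for $D_h[v]$ and the estimate (\ref{eq-lower-bd-1}) for $w$. Write $\Phi(t):=\sup_{x\in\mathbb{T}^2,\,h\in\mathbb{R}^2}|v(t,x;h)|$. Two elementary observations drive the reduction. First, since $(\xi(t)^2+|h|^2)^{-\alpha/2}\le\xi_0^{-\alpha}$, the initial value requires no regularity of $\theta_0$: $\Phi(0)\le 2\|\theta_0\|_{L^\infty}\xi_0^{-\alpha}\le 2M_{\theta,f}\xi_0^{-\alpha}$. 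Second, since $\xi(t)=0$ for $t\ge T_*$, one has $\Phi(t)=|\theta(t)|_{C^\alpha}$ for such $t$; so it suffices to prove $\sup_{t\in[0,T_*]}\Phi(t)\le L$ for a constant $L=L(a,M_{\theta,f},\xi_0)$. I would fix constants in the order: $a$ first, determining the absolute constants $c_1,c,C,r_0$ (and we take $X_0=X_0(a)$ small) and the exponent $\beta:=a(2-\alpha)/(1-\alpha)$; then, for given $M_{\theta,f}$ and $\xi_0\in(0,X_0)$, take $\alpha=\alpha_0$ small and $L$ large, with $L\ge 4M_{\theta,f}\xi_0^{-\alpha}$ and $(2M_{\theta,f}/L)^{1/\alpha}\le r_0$.

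The argument runs by a first-crossing time. Since $\theta$ is smooth, $t\mapsto\Phi(t)$ is continuous; suppose $\sup_{[0,T_*]}\Phi>L$ and let $t_0$ be the first time with $\Phi(t_0)=L$ (note $t_0>0$, as $\Phi(0)\le L/2$). The supremum defining $\Phi(t_0)$ is attained at some $(x_0,h_0)$ with $h_0\neq 0$, since $v(\cdot;0)=0$ and $v(t_0,x;\cdot)\to0$ at infinity; moreover $L=|v(t_0,x_0;h_0)|\le 2M_{\theta,f}(\xi(t_0)^2+|h_0|^2)^{-\alpha/2}$ forces $|h_0|\le(2M_{\theta,f}/L)^{1/\alpha}$, which is small by the choice of $L$. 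Evaluating (\ref{eq-v}) at $(t_0,x_0,h_0)$: the transport terms $(R^\perp\theta)\cdot\nabla_x v^2$ and $w\cdot\nabla_h v^2$ vanish at this critical point, $\partial_t v^2\ge0$ at the time maximum, and $\mathcal{L}v^2\ge -C\|v\|_{L^\infty}^2$ (this is nonnegative if the kernel $K$ in (\ref{eqK}) is nonnegative, and otherwise one splits off the bounded, possibly sign-changing far field of $K$). Hence
\begin{align*}
D_{h_0}[v(t_0)](x_0)\le\frac{2\alpha\,\xi(t_0)|\xi'(t_0)|}{\xi(t_0)^2+|h_0|^2}\,L^2+\frac{2\alpha\,h_0\cdot w}{\xi(t_0)^2+|h_0|^2}\,L^2+2|v|\,|F|+CL^2.
\end{align*}

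Next I would insert the lower bound (\ref{eq-lower-bd-3}), which at the maximum point --- where $|v|/\|v\|_{L^\infty}=1$, with $\|v\|_{L^\infty}$ taken over $[0,t_0]$ and so equal to $L$ --- reads $D_{h_0}[v(t_0)](x_0)\gtrsim c_1L^2\big(|h_0|\log^{\beta}(\kappa+(4|h_0|)^{-1})\big)^{-1}-CL^2$, and bound the right-hand side above term by term. The $\xi'$-term is controlled by the differential inequality $|\xi'|\le c_0(4\alpha)^{-1}\log^{-\beta}(\kappa+(4|\xi|)^{-1})$ from Section~1.2; using $\xi|h_0|\le\tfrac12(\xi^2+|h_0|^2)$ and the near-monotonicity of $r\mapsto r\log^{\beta}(\kappa+(4r)^{-1})$ (cf.\ Lemma~\ref{lem-alm-mon}), it is at most a small multiple of the dissipation term once $c_0$ is small relative to $c_1$. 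The stretching term is treated by inserting (\ref{eq-lower-bd-1}) with its own parameter $\rho\sim|h_0|$ and then Young's inequality, absorbing the $D_{h_0}^{1/2}$ factor into $\tfrac12D_{h_0}$; the residual, of size $\lesssim\alpha^2L^4|h_0|^2\rho\log^{a}(\kappa+\rho^{-1})(\xi^2+|h_0|^2)^{-(2-\alpha)}$, becomes a small multiple of the dissipation term after estimating $|h_0|^4(\xi^2+|h_0|^2)^{-(2-\alpha)}\le|h_0|^{2\alpha}$ and invoking $|h_0|\le(2M_{\theta,f}/L)^{1/\alpha}$, provided $\alpha$ is small relative to $M_{\theta,f}$. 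The forcing term $2|v||F|$, bounded using the smoothness of $f$ and the smallness of $|h_0|$, is likewise a small multiple of the dissipation term. Collecting these, the inequality reduces to $c_1L^2\big(|h_0|\log^{\beta}(\kappa+(4|h_0|)^{-1})\big)^{-1}\lesssim CL^2$, i.e.\ a lower bound on $|h_0|\log^{\beta}(\kappa+(4|h_0|)^{-1})$ depending only on $a$; this contradicts $|h_0|\le(2M_{\theta,f}/L)^{1/\alpha}$ once $L$ is taken large enough in terms of $a$, $M_{\theta,f}$, $\xi_0$. Therefore $\sup_{[0,T_*]}\Phi\le L$, which gives the claim.

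The step I expect to be the main obstacle is this last one: showing that the vortex-stretching contribution $\tfrac{2\alpha h_0\cdot w}{\xi^2+|h_0|^2}v^2$ (after inserting (\ref{eq-lower-bd-1}) and Young's inequality) and the $\xi'$-contribution are both dominated by the nonlinear dissipation term of (\ref{eq-lower-bd-3}), while keeping exact track of how the powers of $\log(\kappa+(4|h_0|)^{-1})$ --- coming from the kernel of $\mathcal{L}$, from the choice of $\rho$, and from the exponent $\beta=a(2-\alpha)/(1-\alpha)$ --- combine. It is here that the hypothesis $a<1/2$ enters: the logarithmic losses are then mild enough that, for $\alpha$ sufficiently small (necessarily allowed to depend on $M_{\theta,f}$ and $\xi_0$, as the statement permits), the dissipation beats the stretching at every scale $|h_0|$ that can occur. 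Secondary subtleties are the ``floor'' regime $|h_0|<\xi(t_0)$, handled via the near-monotonicity of $r\mapsto r\log^{\beta}(\kappa+(4r)^{-1})$ together with the construction of $\xi$ in Section~1.2, and keeping the choices of constants consistent (first $a$, then $M_{\theta,f},\xi_0$, then $\alpha$, then $L$).
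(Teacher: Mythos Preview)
Your overall strategy---run a nonlinear maximum principle on $v$ via the evolution (\ref{eq-v}), the lower bound (\ref{eq-lower-bd-3}), the estimate (\ref{eq-lower-bd-1}), and a first-crossing argument---is exactly the paper's. The gap is in the order of choosing constants, and it bites precisely at the step you flag as ``the main obstacle.''

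When you absorb the stretching residual, your ratio to the dissipation is of size $\alpha^2 L^2|h_0|^{2\alpha}\log^{a+\beta}(\kappa+(4|h_0|)^{-1})$. You then use $L^2|h_0|^{2\alpha}\le 4M_{\theta,f}^2$ and declare this small ``provided $\alpha$ is small relative to $M_{\theta,f}$,'' but that discards the factor $\log^{a+\beta}(\kappa+(4|h_0|)^{-1})$, which is not controlled: your only bound on $|h_0|$ is $|h_0|\le(2M_{\theta,f}/L)^{1/\alpha}$, and you intend to send $L\to\infty$ last, so $|h_0|$ may be arbitrarily small and the logarithm arbitrarily large. If instead you invoke Lemma~\ref{lem-alm-mon} at the $L$-dependent upper bound, the log contributes $\big((1/\alpha)\log(L/2M_{\theta,f})\big)^{a+\beta}$, and the net power of $\alpha$ drops to $2-2(a+\beta)\approx 2-6a$, which is only positive for $a<1/3$; you lose the range $[1/3,1/2)$. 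The same issue (in fact the one that actually forces $a<1/2$) arises in the tail term $C\|v\|_{L^\infty}|h|(\xi^\alpha/\rho+\rho^{-(1-\alpha)})$ of (\ref{eq-lower-bd-1}), which your sketch does not treat: its ratio to the dissipation is $\sim\alpha L|h_0|^\alpha\log^\beta(\cdots)$, and only after Lemma~\ref{lem-alm-mon} does one get a net power $\alpha^{1-\beta}\approx\alpha^{1-2a}$.

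The paper's resolution is to \emph{not} treat the threshold as a free large parameter: it fixes $M=4M_{\theta,f}\xi_0^{-\alpha}$ (your lower bound on $L$, with equality) and uses the cruder, $\alpha$-independent bound $|h_0|\le\xi_0$ together with Lemma~\ref{lem-alm-mon} to replace every $\log(\kappa+(4|h_0|)^{-1})$ by $\log(\kappa+(4\xi_0)^{-1})$ at the cost of an inverse power of $\alpha$. The endgame is also different: instead of your ``lower bound on $|h_0|$ contradicts $|h_0|\le(2M_{\theta,f}/L)^{1/\alpha}$,'' the paper works on an interval where $g(t)=\|v(t)\|_{L^\infty}^2>(3M/4)^2$ and shows $g'(t)<0$ there, using $\xi_0<X_0(a)$ to make the forcing error and the constant term in (\ref{eq-lower-bd-3}) small. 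Your sketch can be repaired along these lines: take $L=4M_{\theta,f}\xi_0^{-\alpha}$, use $|h_0|\le\xi_0$ plus Lemma~\ref{lem-alm-mon} in the stretching bound, and then either follow the paper's $g'<0$ endgame or observe that your lower bound on $|h_0|\log^\beta(\cdots)$ now contradicts $|h_0|\le 2^{-1/\alpha}\xi_0$ for $\alpha$ small.
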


\begin{proof}
Let $v$ be as defined in (\ref{eq-def-v}).  Define also $$M:=\frac{4M_{\theta,f}}{\xi_0^\alpha},$$ and note that $$\lVert v(0)\rVert_{L^\infty}\leq \frac{2\lVert \theta_0\rVert_{L^\infty}}{\xi_0^\alpha}\leq \frac{M}{2}.$$

Now, set $T=\sup\{t_0:\lVert v(t)\rVert_{L^\infty}<M$ for all $0<t<t_0\}$.  We want to show that $T=+\infty$, and will argue by contradiction.  Suppose that $T$ is finite, and choose $t_0\in (0,T)$ such that for all $t\in (t_0,T)$ one has
$$\lVert v(t)\rVert_{L^\infty}>\frac{3M}{4}, \quad \lVert v(t_0)\rVert_{L^\infty}=\frac{3M}{4}.$$

For $t\in [t_0,T)$, define
\begin{align*}
g(t):=\sup_{x\in\mathbb{T}^2,\, h\in\mathbb{R}^2} |v(t,x;h)|^2,
\end{align*}
and choose $x_0(t), h_0(t)\in\mathbb{T}^2$ such that $$g(t)=v(t,x_0(t);h_0(t))^2$$ and $$g'(t)=(\partial_t v^2)(t,x_0(t);h_0(t)).$$

In what follows, we fix $t\in [t_0,T)$, and set $x_0=x_0(t)$ and $h_0=h_0(t)$.  Note that because $x\in \mathbb{T}^2$ we immediately have $|h_0|\leq 4\pi$, while the observation that $|h|>\xi_0$ implies
\begin{align*}
|v(t,x_0,h)|\leq \frac{2\lVert \theta(t)\rVert_{L^\infty}}{(\xi(t)^2+|h|^2)^{\alpha/2}}\leq \frac{2\lVert\theta_0\rVert_{L^\infty}}{|h|^\alpha}\leq \frac{2\lVert \theta_0\rVert_{L^\infty}}{\xi_0^\alpha}\leq \frac{M}{2},
\end{align*}
which shows that the choice of $(x_0,h_0)$ gives $|h_0|\leq \xi_0$ (since the restriction $t\geq t_0$ gives $v(t;x_0,h_0)=\sqrt{g(t)}\geq \frac{3M}{4}$).

By the choice of $x_0$ and $h_0$, we obtain the optimality conditions
$$\nabla_x v^2(t,\cdot;h_0(t))|_{x=x_0(t)}=0,\quad
\nabla_h v^2(t,x_0(t);\cdot)|_{h=h_0(t)}=0,$$
and
$$\mathcal{L}v^2(t,\cdot,h_0(t))|_{x=x_0(t)}\geq 0.$$
Combining these with the equation (\ref{eq-v}) for $v^2$, we obtain
\begin{align}
g'(t)+D_{h_0}[v(t)]&\leq \frac{2\alpha |\xi(t)\xi'(t)|}{\xi(t)^2+|h_0|^2}v^2+\frac{2\alpha h_0\cdot w}{\xi(t)^2+|h_0|^2}v^2+2vF,\label{eq-gprime-1}
\end{align}
where we omitted the evaluation at $(t,x_0;h_0)$ when no ambiguity can arise.

Suppose that $\xi$ is a non-negative decreasing function with $\xi(0)=\xi_0$ as given above, in particular solving the differential inequality
\begin{align*}
|\xi'|\leq \frac{c_0}{4\alpha\log^{a\frac{2-\alpha}{1-\alpha}}(\kappa+(4\xi)^{-1})},
\end{align*}
and set $$d_{h_0}[v]=d_{h_0}[v(t)](x_0):=\frac{c_0}{|h_0|\log^{a(2-\alpha)/(1-\alpha)}(\kappa+(4|h_0|)^{-1})}v^2.$$

We then have
\begin{align*}
&\frac{2\alpha |\xi\xi'|}{\xi^2+|h_0|^2}v^2\\
&\hspace{0.2in}\leq \frac{c_0\xi}{(\xi^2+|h_0|^2)\log^{a\frac{2-\alpha}{1-\alpha}}(\kappa+(4\xi)^{-1})}v^2\\
&\hspace{0.2in}\leq \frac{c_0(\xi^2+|h_0|^2)^{1/2}}{(\xi^2+|h_0|^2)\log^{a\frac{2-\alpha}{1-\alpha}}(\kappa+(4(\xi^2+|h_0|^2)^{1/2})^{-1})}v^2\\
&\hspace{0.2in}=\frac{c_0}{(\xi^2+|h_0|^2)^{1/2}\log^{a\frac{2-\alpha}{1-\alpha}}(\kappa+(4(\xi^2+|h_0|^2)^{1/2})^{-1})}v^2\\
&\hspace{0.2in}\leq C_2d_{h_0}[v],
\end{align*}
where $C_2>0$ depends only on $a$, and where to obtain the last inequality we used Lemma $\ref{lem-alm-mon}$.

We now estimate the second term on the right-hand side of (\ref{eq-gprime-1}).  Using the first estimate in Lemma $\ref{lem-lower-bd}$, we obtain
\begin{align*}
&\frac{2\alpha h_0\cdot w}{\xi(t)^2+|h_0|^2}v^2\\
&\hspace{0.2in}\leq \frac{2C\alpha|h_0|(R_1\log^a(\kappa+R_1^{-1}))^{1/2}}{(\xi^2+|h_0|^2)^{1-(\alpha/2)}}|D_{h_0}[v(t)]|^{1/2}|v|^2\\
&\hspace{1.2in}+\frac{2\alpha C|h|^2\lVert v\rVert_{L^\infty}}{\xi^2+|h_0|^2}\bigg(\frac{\xi^\alpha}{R_1}+\frac{1}{R_1^{1-\alpha}}\bigg)|v|^2
\end{align*}
for all $R_1\geq 4|h_0|$, so that an application of Young's inequality gives the bound
\begin{align*}
&\frac{1}{2}|D_{h_0}[v(t)]|+C\bigg(\frac{\alpha^2|h_0|^2}{(\xi^2+|h_0|^2)^{2-\alpha}}R_1\log^a(\kappa+R_1^{-1})|v|^4\\
&\hspace{1.6in}+\frac{\alpha|h_0|^2}{\xi^2+|h_0|^2}\lVert v\rVert_{L^\infty}\bigg(\frac{\xi^\alpha}{R_1}+\frac{1}{R_1^{1-\alpha}}\bigg)|v|^2\bigg)
\end{align*}
for all such $R_1$, which is equal to
\begin{align*}
&\frac{1}{2}|D_{h_0}[v(t)]|+\frac{C\alpha|h_0|^2}{\xi^2+|h_0|^2}\bigg(\frac{\alpha R_1\log^a(\kappa+R_1^{-1})|v|^2}{(\xi^2+|h_0|^2)^{1-\alpha}}\\
&\hspace{2.2in}+\frac{\lVert v\rVert_{L^\infty}\xi^\alpha}{R_1}+\frac{\lVert v\rVert_{L^\infty}}{R_1^{1-\alpha}}\bigg)|v|^2.
\end{align*}

Now, choosing $R_1=4(\xi^2+|h_0|^2)^{1/2}$ and recalling that we assumed $\lVert v\rVert_{L^\infty}\leq M=4M_{\theta,f}/\xi_0^\alpha$, we obtain
\begin{align}
\nonumber &\frac{\alpha R_1\log^a(\kappa+R_1^{-1})|v|^2}{(\xi^2+|h_0|^2)^{1-\alpha}}\\
\nonumber &\hspace{0.2in}\leq\frac{64M_{\theta,f}^2\alpha \log^a(\kappa+(4(\xi^2+|h_0|^2)^{1/2})^{-1})}{(\xi^2+|h_0|^2)^{(1/2)-\alpha}\xi_0^{2\alpha}}\\
&\hspace{0.2in}\leq \frac{CM_{\theta,f}^2\alpha^{1-\beta}}{|h_0|\log^{a\frac{2-\alpha}{1-\alpha}}
(\kappa+(4|h_0|)^{-1})}\log^{\beta}(\kappa+(4\xi_0)^{-1}),\label{eq-R1term}
\end{align}
where we set $\beta=a\frac{3-2\alpha}{1-\alpha}$.

Indeed, the inequality in passing to the last line of (\ref{eq-R1term}) follows by writing
\begin{align*}
\frac{\log^a(\kappa+(4(\xi^2+|h_0|^2)^{1/2})^{-1})}{(\xi^2+|h_0|^2)^{(1/2)-\alpha}\xi_0^{2\alpha}}=\frac{\log^{\beta}(\kappa+(4\xi_0)^{-1})}{|h_0|\log^{a\frac{2-\alpha}{1-\alpha}}(\kappa+(4|h_0|)^{-1})}\cdot \Xi
\end{align*}
with
\begin{align*}
\Xi&:=\frac{|h_0|\log^{a\frac{2-\alpha}{1-\alpha}}(\kappa+(4|h_0|)^{-1})\log^a(\kappa+(4(\xi^2+|h_0|^2)^{1/2})^{-1})}{(\xi^2+|h_0|^2)^{(1/2)-\alpha}\xi_0^{2\alpha}\log^{\beta}(\kappa+(4\xi_0)^{-1})}
\end{align*}
and observing that an application of Lemma $\ref{lem-alm-mon}$, along with $\xi(t)\leq \xi_0$ and $|h_0|\leq \xi_0$, implies
\begin{align*}
\Xi&\leq C_2\frac{(\xi^2+|h_0|^2)^{\alpha}\log^{\beta}(\kappa+(4(\xi^2+|h_0|^2)^{1/2})^{-1})}{\xi_0^{2\alpha}\log^{\beta}(\kappa+(4\xi_0)^{-1})}\lesssim \frac{1}{\alpha^\beta}.
\end{align*}

Similarly, with the above choice of $R_1$, we also have
\begin{align*}
&\frac{\lVert v\rVert_{L^\infty}\xi^\alpha}{R_1}+\frac{\lVert v\rVert_{L^\infty}}{R_1^{1-\alpha}}\\
&\hspace{0.2in}\leq \frac{CM_{\theta,f}}{\xi_0^\alpha}\bigg(\frac{\xi^\alpha}{(\xi^2+|h_0|^2)^{1/2}}+\frac{1}{(\xi^2+|h_0|^2)^{\frac{1-\alpha}{2}}}\bigg)\\
&\hspace{0.2in}\leq \frac{2CM_{\theta,f}}{\xi_0^\alpha}\bigg(\frac{(\xi^2+|h_0|^2)^{\alpha/2}}{(\xi^2+|h_0|^2)^{1/2}}\bigg)\\
&\hspace{0.2in}\leq \frac{CM_{\theta,f}\alpha^{-a\frac{2-\alpha}{1-\alpha}}}{|h_0|\log^{a\frac{2-\alpha}{1-\alpha}}(\kappa+(4|h_0|)^{-1})}\log^{a\frac{2-\alpha}{1-\alpha}}(\kappa+(4\xi_0)^{-1}),
\end{align*}
where in the inequality passing from the third to fourth lines, we used
\begin{align*}
\frac{2CM_{\theta,f}}{\xi_0^\alpha}\bigg(\frac{(\xi^2+|h_0|^2)^{\alpha/2}}{(\xi^2+|h_0|^2)^{1/2}}\bigg)&=\frac{2CM_{\theta,f}\log^{a\frac{2-\alpha}{1-\alpha}}(\kappa+(4\xi_0)^{-1})}{|h_0|\log^{a\frac{2-\alpha}{1-\alpha}}(\kappa+(4|h_0|)^{-1})}\cdot\widetilde{\Xi}
\end{align*}
with
\begin{align*}
\widetilde{\Xi}&:=\frac{|h_0|\log^{a\frac{2-\alpha}{1-\alpha}}(\kappa+(4|h_0|)^{-1})}{\xi_0^\alpha\log^{a\frac{2-\alpha}{1-\alpha}}(\kappa+(4\xi_0)^{-1})}\cdot\frac{(\xi^2+|h_0|^2)^{\alpha/2}}{(\xi^2+|h_0|^2)^{1/2}}\\
&\leq C_2\frac{(\xi^2+|h_0|^2)^{1/2}\log^{a\frac{2-\alpha}{1-\alpha}}(\kappa+(4(\xi^2+|h_0|^2)^{1/2})^{-1})}{\xi_0^\alpha\log^{a\frac{2-\alpha}{1-\alpha}}(\kappa+(4\xi_0)^{-1})}\cdot\frac{(\xi^2+|h_0|^2)^{\alpha/2}}{(\xi^2+|h_0|^2)^{1/2}}\\
&=C_2\frac{(\xi^2+|h_0|^2)^{\alpha/2}\log^{a\frac{2-\alpha}{1-\alpha}}(\kappa+(4(\xi^2+|h_0|^2)^{1/2})^{-1})}{\xi_0^\alpha\log^{a\frac{2-\alpha}{1-\alpha}}(\kappa+(4\xi_0)^{-1})}\\
&\lesssim \frac{1}{\alpha^{a\frac{2-\alpha}{1-\alpha}}}.
\end{align*}

Turning to the last term on the right-hand side of (\ref{eq-gprime-1}), we make the observation that one has the bound $|F(t,x_0;h_0)|\leq M_{\theta,f}|h|^{-\alpha}$, and use the inequality $2ab\leq c_0a^2+c_0^{-1}b^2$, giving
\begin{align*}
2vF
&\leq \frac{c_0}{|h_0|\log^{a\frac{2-\alpha}{1-\alpha}}(\kappa+(4|h_0|)^{-1})}|v|^2\\
&\hspace{0.4in}+\frac{M_{\theta,f}^2}{c_0}|h_0|^{1-2\alpha}\log^{a\frac{2-\alpha}{1-\alpha}}(\kappa+(4|h_0|)^{-1}).
\end{align*}

Collecting the above estimates, it follows that we have the bound
\begin{align*}
&g'(t)+\frac{1}{2}|D_{h_0}[v(t)]|\\
&\hspace{0.2in}\leq (C_2+1)d_{h_0}[v]+\frac{C_3\alpha|h_0|^2}{\xi^2+|h_0|^2}\bigg(M_{\theta,f}^2\alpha^{1-\beta}\log^\beta(\kappa+(4\xi_0)^{-1})\\
&\hspace{1.0in}+M_{\theta,f}\alpha^{-a\frac{2-\alpha}{1-\alpha}}\log^{a\frac{2-\alpha}{1-\alpha}}(\kappa+(4\xi_0)^{-1})\bigg)\frac{|v|^2}{|h_0|\log^{a\frac{2-\alpha}{1-\alpha}}(\kappa+(4|h_0|)^{-1})}\\
&\hspace{1.8in}+\frac{M_{\theta,f}^2}{c_0}|h_0|^{1-2\alpha}\log^{a\frac{2-\alpha}{1-\alpha}}(\kappa+(4|h_0|)^{-1}),
\end{align*}
with constant $C_3$ depending only on $a$.

Now, $a<\frac{1}{2}$ implies the limits
\begin{align*}
\alpha^{2-\beta}=\alpha^{2-a\frac{3-2\alpha}{1-\alpha}}\rightarrow 0,\quad \alpha^{1-a\frac{2-\alpha}{1-\alpha}}\rightarrow 0
\end{align*}
hold as $\alpha\rightarrow 0$.  It follows that we may choose $\alpha_0=\alpha_0(a,M_{\theta,f},\xi_0)<1/2$ small enough so that for $\alpha\leq \alpha_0$ we have
\begin{align*}
\nonumber &C_3\alpha\bigg(M_{\theta,f}^2\alpha^{1-\beta}\log^\beta(\kappa+(4\xi_0)^{-1})\\
&\hspace{0.8in}+M_{\theta,f}\alpha^{-a\frac{2-\alpha}{1-\alpha}}\log^{a\frac{2-\alpha}{1-\alpha}}(\kappa+(4\xi_0)^{-1})\bigg)\leq c_0.%\label{eq-smallness-alpha}
\end{align*}
Using this bound, we obtain
\begin{align*}
g'(t)+\frac{1}{2}|D_{h_0}[v(t)]|&\leq (C_2+1)d_{h_0}[v]+\frac{c_0}{|h_0|\log^{a\frac{2-\alpha}{1-\alpha}}(\kappa+(4|h_0|)^{-1})}|v|^2\\
&\hspace{0.2in}+\frac{M_{\theta,f}^2}{c_0}|h_0|^{1-2\alpha}\log^{a\frac{2-\alpha}{1-\alpha}}(\kappa+(4|h_0|)^{-1})\\
&=(C_2+2)d_{h_0}[v(t)]+\frac{M_{\theta,f}^2}{c_0}|h_0|^{1-2\alpha}\log^{a\frac{2-\alpha}{1-\alpha}}(\kappa+(4|h_0|)^{-1}).
\end{align*}
so that, using Lemma $\ref{lem-alm-mon}$ along with the bound $|h_0|\leq \xi_0$, and allowing the constant $C_4=C_4(a)$ to increase from line to line,
\begin{align}
\nonumber g'(t)+\frac{1}{2}|D_{h_0}[v(t)]|-(C_2+2)d_{h_0}[v]&\leq \frac{C_4M_{\theta,f}^2}{c_0}(\xi_0)^{1-2\alpha}\log^{a\frac{2-\alpha}{1-\alpha}}(\kappa+(4\xi_0)^{-1})\\
\nonumber &\leq \frac{C_4M_{\theta,f}^2}{c_0}(\xi_0)^{1-2\alpha}\log^{3a}(\kappa+(4\xi_0)^{-1})\\
\nonumber &\leq \frac{C_4M_{\theta,f}^2}{c_0}(\xi_0)^{(1/2)-2\alpha}\\
\nonumber &\leq \frac{C_4M^2}{c_0}\xi_0^{1/2}.
\end{align}

We now invoke the lower bound (\ref{eq-lower-bd-3}) for $D_{h_0}[v(t)]$, which in our present notation can be written as
\begin{align*}
(D_h[v(t)])(x)&\geq \frac{c_1}{c_0}\bigg(\frac{ |v(t,x;h)| }{ \lVert v\rVert_{L^\infty} }\bigg)^{1/(1-\alpha)}d_h[v(t)](x)
-\frac{C_5\|v\|_{L^\infty}^2}{r_0\log^a(\kappa+r_0^{-1})}.
\end{align*}
In view of our choice of $(x_0,h_0)$ as a point of maximum for $|v(t)|$, this becomes

\begin{align*}
(D_{h_0}[v(t)])(x_0)&\geq \bigg(\frac{c_1}{c_0}-\frac{C_5|h_0|\log^{a\frac{2-\alpha}{1-\alpha}}(\kappa+(4|h_0|)^{-1})}{c_0r_0\log^a(\kappa+r_0^{-1})}\bigg)d_{h_0}[v(t)](x_0)\\
&\geq \bigg(\frac{c_1}{c_0}-\frac{C_5\xi_0\log^{a\frac{2-\alpha}{1-\alpha}}(\kappa+(4\xi_0)^{-1})}{c_0r_0\log^a(\kappa+r_0^{-1})}\bigg)d_{h_0}[v(t)](x_0)\\
&\geq \bigg(\frac{c_1}{c_0}-\frac{C_5\xi_0^{1/2}}{c_0r_0\log^a(\kappa+r_0^{-1})}\bigg)d_{h_0}[v(t)](x_0),
\end{align*}
where the constant $C_5>0$ again depends only on $a$ and may increase from line to line.
Combining this with the above estimates gives
\begin{align*}
g'(t)+\bigg(\frac{c_1}{2c_0}-\frac{C_5\xi_0^{1/2}}{2c_0r_0\log^a(\kappa+r_0^{-1})}-(C_2+2)\bigg)d_{h_0}[v(t)]& \leq \frac{C_4M^2}{c_0}\xi_0^{1/2}.
\end{align*}
Recall that $c_1$ was determined at the end of Section 1.1 above, as the constant involved in the lower bound ($\ref{eq-lower-bd-3}$).  Choose $c_0=c_1/(4(C_2+2))$.  This gives
\begin{align*}
g'(t)+\bigg(C_2+2-\frac{2C_5(C_2+2)\xi_0^{1/2}}{c_1r_0\log^a(\kappa+r_0^{-1})}\bigg)d_{h_0}[v(t)]& \leq \frac{4C_4(C_2+2)M^2}{c_1}\xi_0^{1/2},
\end{align*}
so that for $\xi_0\leq X_1(a,r_0)$ with
\begin{align*}
X_1(a,r_0):=\bigg(\frac{c_1r_0\log^a(\kappa+r_0^{-1})(C_2+2)}
{4C_5}\bigg)^2,
\end{align*}
we have
\begin{align}
g'(t)+\frac{C_2+2}{2}d_{h_0}[v(t)]& \leq \frac{4C_4(C_2+2)M^2}{c_1}\xi_0^{1/2}.\label{eq-g-2}
\end{align}

Observing that
\begin{align*}
d_{h_0}[v(t)]\geq \frac{c_0}{4\pi\log^{a\frac{2-\alpha}{1-\alpha}}(\kappa+(16\pi)^{-1})}v^2&\geq \frac{c_0}{4\pi\log^{3a}(\kappa+(16\pi)^{-1})}v^2\\
&=\frac{c_1}{4A(C_2+2)}v^2,
\end{align*}
with $A:=4\pi\log^{3a}(\kappa+(16\pi)^{-1})$, we rewrite the bound ($\ref{eq-g-2}$) as
\begin{align}
g'(t)+\bigg(\frac{c_1}{8A}\bigg)g(t)&\leq \frac{4C_4(C_2+2)M^2}{c_1}\xi_0^{1/2}.\label{eq-g-3}
\end{align}
for $\xi_0\leq X_1(a,r_0)$.

We now make our choice of constants $\xi_0$ and $\alpha$.
For this, note that (\ref{eq-g-3}) and the observation that $g(t)\geq (3M/4)^2$ for $t_0\leq t\leq T$ imply
\begin{align*}
g'(t)\leq \frac{1}{A}\bigg(-\frac{9c_1}{128}+\frac{4C_4(C_2+2)}{c_1}\xi_0^{1/2}A\bigg)M^2,
\end{align*}
for $\xi_0\leq X_1(a,r_0)$.  Choosing
$$X_0=X_0(a,r_0)=\min\bigg\{X_1(a,r_0),\left(\frac{9c_1^{2}}{2^{10}A
C_4(C_2+2)}\right)^2\bigg\},$$
where the right side is a constant depending only on $a$ and $r_0$, we obtain that, for $\xi_0\leq X_0$,
\begin{align*}
\frac{4C_4(C_2+2)}{c_1}\xi_0^{1/2}A\leq \frac{9c_1}{256},
\end{align*}
and thus $g'(t)\leq -9c_1M^2/(256)<0$ for $t_0\leq t\leq T$.  We therefore have $g(t)\leq (3M/4)^2$ for $t\in (t_0,T)$, which gives the desired contradiction when evaluated at $t=T$.

We have thus shown that for $\xi_0\leq X_0(a)$ we may choose $\alpha_0(a,M_{\theta,f},\xi_0)$ such that under the hypotheses described in the statement of the theorem, $$\lVert v(t)\rVert_{L^\infty}\leq M$$ for all $t\geq 0$.  Recalling the definition of $g$ and $v$, we have that for $t\geq T_*$, $\xi(t)=0$, and thus
\begin{align*}
|\theta(t)|_{C^\alpha}&=\lVert v(t)\rVert_{L^\infty}\leq M,\quad 0<\alpha\leq \alpha_0,
\end{align*}
as desired.
\end{proof}

We are now ready to conclude the global wellposedness of the equation \eqref{eq1-log}.

\begin{proof}[Proof of Theorem \ref{thm-log-gwp}]
It follows from Proposition \ref{prop-local} that a unique local solution $u$ exists up to time $T>0$, which depends on the initial data. Now we choose a sufficiently small $\alpha>0$ so that $T_*\le T/2$, where $T_*$ is from Proposition \ref{prop4}. By the a priori $C^\alpha$ estimate in Proposition \ref{prop4} and the known regularity criteria, we conclude that $u$ is global-in-time.

Next, note that the $C^\alpha$ norm is supercritical with respect to the scaling of (\ref{eq1-log}).  A standard bootstrap argument (see, for instance \cite[Theorem 3.1]{CW} and analogous arguments in \cite{CTV}) therefore gives
\begin{align*}
\sup_{t>T_*}\,\,\lVert \theta(t)\rVert_{H^k}<\infty
\end{align*}
as desired.
\end{proof}

\section{Proof of Theorem \ref{thm2}: $(H^\tau,L^2)$ nonlinear instability for forced log-supercritical SQG}

In this section, we establish Theorem $\ref{thm2}$, which asserts that if the log-supercritical SQG evolution is linearly-unstable near the stationary solution $\Theta_0$ then it is $(H^\tau,L^2)$ nonlinearly unstable with respect to perturbation of $\Theta_0$.

We first state and prove a version of Lemma $\ref{lemma-ldelta-supercritical}$ adapted to the logarithmically supercritical equation.

\begin{lemma}
\label{lemma-ldelta-log}
Fix $a\in [0,1)$, $\sigma\in [0,1]$, and let $\mathcal{L}$ be as stated earlier.  Suppose that $\Theta_0$ satisfies $\int \Theta_0=0$ (and thus $R^\perp\Theta_0$ has the same property, e.g. by observing that this property corresponds to vanishing of the zeroth Fourier coefficient).  Then there exists a constant $C>0$ such that for all $t\neq 0$ and $\varphi\in C^{\infty}(\mathbb{T}^2)$ with $\int_{\mathbb{T}^2} \varphi(x)dx=0$ one has
\begin{align*}
\Vert e^{tL_{\textrm{log},\delta}}\varphi\rVert_{L^2}\leq Ct^{-\sigma}\lVert \varphi\rVert_{L^2}^{1-\sigma}\lVert \mathcal{L}^{-1}\varphi\rVert_{L^2}^\sigma,
\end{align*}
where $L_{\textrm{log},\delta}=L_{\textrm{log},\delta}=L_{\textrm{log}}-(\lambda+\delta)I$ as in (\ref{def-ldelta}), adapted to the linearized operator $L_{\textrm{log}}$.
\end{lemma}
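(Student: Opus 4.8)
The plan is to follow the same strategy as in the proof of Lemma \ref{lemma-ldelta-supercritical}, replacing $\Lambda^\gamma$ by $\mathcal{L}$ throughout. Fix a parameter $\alpha>0$ and define, for $\varphi\in C^\infty(\mathbb{T}^2)$ with $\int\varphi\,dx=0$,
$$A\varphi=A_\alpha\varphi:=-(R^\perp\Theta_0)\cdot\nabla\varphi-\mathcal{L}\varphi-\alpha\varphi,$$
so that $A\varphi=L_{\textrm{log},\delta}\varphi+(R^\perp\varphi)\cdot\nabla\Theta_0-(\alpha-\lambda-\delta)\varphi$. As before, the crux is the resolvent-type bound
\begin{align}
\lVert A^{-1}\mathcal{L}\varphi\rVert_{L^2}\lesssim\lVert\varphi\rVert_{L^2},\label{eq1-A-log}
\end{align}
from which the stated estimate follows by the standard analytic semigroup decay and interpolation argument of \cite{FPV} (using that $\mathcal{L}$ is a positive Fourier multiplier comparable to $\Lambda/\log^a(\kappa+|\xi|)$, so $\mathcal{L}^{-1}$ and $\mathcal{L}^{-1/2}$ are well-defined on mean-zero functions). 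To prove \eqref{eq1-A-log} it suffices, taking $\phi=A^{-1}\mathcal{L}\varphi$, to establish
\begin{align}
\lVert\phi\rVert_{L^2}\lesssim\lVert\mathcal{L}^{-1}A\phi\rVert_{L^2}\label{eq2-log}
\end{align}
for all mean-zero $\phi$.

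To obtain \eqref{eq2-log}, I would pair $\mathcal{L}^{-1}A\phi$ with $\overline{\phi}$ in $L^2$, getting
$$\int(\mathcal{L}^{-1}A\phi)\overline{\phi}\,dx=-\int\big[\mathcal{L}^{-1}((R^\perp\Theta_0)\cdot\nabla\phi)\big]\overline{\phi}\,dx-\lVert\phi\rVert_{L^2}^2-\alpha\lVert\mathcal{L}^{-1/2}\phi\rVert_{L^2}^2,$$
so that
$$\lVert\phi\rVert_{L^2}^2+\alpha\lVert\mathcal{L}^{-1/2}\phi\rVert_{L^2}^2\leq\lVert\mathcal{L}^{-1}A\phi\rVert_{L^2}\lVert\phi\rVert_{L^2}+\bigg|\int\big[(R^\perp\Theta_0)\cdot\nabla\phi\big]\overline{\mathcal{L}^{-1}\phi}\,dx\bigg|.$$
Using $\divop(R^\perp\Theta_0)=0$ to symmetrize exactly as in the proof of Lemma \ref{lemma-ldelta-supercritical} — subtracting the vanishing term $\int(R^\perp\Theta_0)\cdot\nabla(\mathcal{L}^{-1/2}\phi)\,\overline{(\mathcal{L}^{-1/2}\phi)}\,dx=0$ and passing to Fourier coefficients — the quadratic form becomes a sum over triangles $j+k-\ell=0$ with multiplier $(\widehat{R^\perp\Theta_0})_j\cdot(ik)\,(m(\ell)^{-1}-m(k)^{-1/2}m(\ell)^{-1/2})\widehat{\phi}_k\widehat{\phi}_\ell$, where $m(\xi):=|\xi|/\log^a(\kappa+|\xi|)$ is the symbol of $\mathcal{L}$. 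The key pointwise claim to verify is that on any such triangle (with $\min\{|j|,|k|,|\ell|\}$ in each of the three positions), the factor
$$\frac{|k|}{m(\ell)^{1/2}}\big|m(\ell)^{-1/2}-m(k)^{-1/2}\big|\lesssim |j|\,|\ell|^{-1/2}\log^{a/2}(\kappa+|\ell|)\lesssim_\varepsilon |j|^{1+\varepsilon}|\ell|^{-1/2},$$
using comparability of the two largest sides and, in the case $\min=|j|$, the mean value theorem applied to $s\mapsto m(s)^{-1/2}$ (whose derivative is $O(m(s)^{-3/2}m'(s))$, and $m'(s)\sim\log^{-a}(\kappa+s)$). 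This is the analogue of the three-case triangle estimate in the earlier lemma, now with a harmless logarithmic loss absorbed into an $|j|^\varepsilon$ factor.

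Given this, Cauchy–Schwarz yields
$$\bigg|\int\big[(R^\perp\Theta_0)\cdot\nabla\phi\big]\overline{\mathcal{L}^{-1}\phi}\,dx\bigg|\lesssim\lVert\Lambda^{2+\varepsilon}\Theta_0\rVert_{L^2}\lVert\phi\rVert_{L^2}\lVert\mathcal{L}^{-1/2}\phi\rVert_{L^2}\leq\tfrac12\lVert\phi\rVert_{L^2}^2+C\lVert\Lambda^{2+\varepsilon}\Theta_0\rVert_{L^2}^2\lVert\mathcal{L}^{-1/2}\phi\rVert_{L^2}^2$$
(here one uses $\lVert\mathcal{L}^{-1/2}\phi\rVert_{L^2}^2\sim\sum_\ell m(\ell)^{-1}|\widehat\phi_\ell|^2$ and that $|j|^{1+\varepsilon}|\widehat{R^\perp\Theta_0}_j|\in\ell^2$ for $\Theta_0\in H^{2+\varepsilon}$; in the application $\Theta_0\in H^\tau$ with the bootstrapped regularity $\Theta_0\in H^{3}$ from Theorem \ref{thm2}'s hypothesis $f\in H^3$ is more than enough). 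Choosing $\alpha$ large enough (depending on $a$ and $\lVert\Lambda^{2+\varepsilon}\Theta_0\rVert_{L^2}$) absorbs the last term into the left-hand side, giving \eqref{eq2-log} and hence \eqref{eq1-A-log}. I expect the main obstacle to be the triangle-multiplier estimate: one must check that replacing the clean power weight $|\xi|^{-\gamma}$ by $m(\xi)^{-1/2}$ does not destroy the crucial cancellation $m(\ell)^{-1/2}-m(k)^{-1/2}$ when $|k|\sim|\ell|$, and that the logarithmic factors only produce an $\varepsilon$-loss in the derivative count on $\Theta_0$ — everything else is a routine transcription of the proof of Lemma \ref{lemma-ldelta-supercritical}.
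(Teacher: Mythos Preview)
Your proposal is correct and follows essentially the same route as the paper: define $A$, reduce to the resolvent bound $\lVert\phi\rVert_{L^2}\lesssim\lVert\mathcal{L}^{-1}A\phi\rVert_{L^2}$, pair and symmetrize in Fourier, and carry out the three-case triangle estimate on the multiplier $|k|\,m(\ell)^{-1/2}|m(\ell)^{-1/2}-m(k)^{-1/2}|$ to get the bound $|j|\,m(\ell)^{-1/2}$, then Cauchy--Schwarz and absorb via large $\alpha$. One small remark: your displayed second inequality $|j|\,|\ell|^{-1/2}\log^{a/2}(\kappa+|\ell|)\lesssim_\varepsilon |j|^{1+\varepsilon}|\ell|^{-1/2}$ is not true pointwise (the logarithm is in $\ell$, not $j$) and is in any case unnecessary---the paper, like your Cauchy--Schwarz step, keeps $|\ell|^{-1/2}\log^{a/2}(\kappa+|\ell|)=m(\ell)^{-1/2}$ intact to produce $\lVert\mathcal{L}^{-1/2}\phi\rVert_{L^2}$, with the $\varepsilon$ appearing only when summing $\sum_j|j|\,|\widehat{R}_j|$.
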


\begin{proof}
As in the proof of Lemma $\ref{lemma-ldelta-supercritical}$, the essential task is to establish a bound of the form
\begin{align}
\lVert \phi\rVert_{L^2}\lesssim \lVert \mathcal{L}^{-1}A\phi\rVert_{L^2}\label{eq2-log}
\end{align}
for $\phi$ satisfying $\int \phi dx=0$, where the operator $A$ is given by
$$A\varphi=L_{\textrm{log},\delta}\varphi+(R^\perp \varphi)\cdot\nabla\Theta_0-(\alpha-\lambda-\delta)\varphi.$$

Writing
\begin{align*}
\int (\mathcal{L}^{-1}A\phi)\overline{\phi}dx=-\int \Big[\mathcal{L}^{-1}((R^\perp\Theta_0)\cdot \nabla \phi)\Big]\overline{\phi}dx-\lVert \phi\rVert_{L^2}^2-\alpha\lVert \mathcal{L}^{-1/2}\phi\rVert_{L^2}^2,
\end{align*}
we have
\begin{align*}
\lVert \phi\rVert_{L^2}^2+\alpha\lVert \mathcal{L}^{-1/2}\phi\rVert_{L^2}^2&=-\int (\mathcal{L}^{-1}A\phi)\overline{\phi} dx-\int \Big[\mathcal{L}^{-1}((R^\perp\Theta_0)\cdot\nabla\phi)\Big]\overline{\phi} dx\\
&\leq \lVert \mathcal{L}^{-1}A\phi\rVert_{L^2}\lVert\phi\rVert_{L^2}+\bigg|\int \Big[(R^\perp\Theta_0)\cdot \nabla\phi\Big]\overline{\Big[\mathcal{L}^{-1}\phi\Big]}dx\bigg|,
\end{align*}
so that if
\begin{align*}
(R^\perp\Theta_0)(x)=\sum_{j\in\mathbb{Z}^2, j\neq 0} \widehat{R}_je^{2\pi i j\cdot x},\quad
\phi(x)=\sum_{k\in\mathbb{Z}^2, k\neq 0} \widehat{\phi}_ke^{2\pi i k\cdot x},
\end{align*}
an argument as in (\ref{eq-rperp})--(\ref{eq3}) gives
\begin{align}
\nonumber &\bigg|\int \Big[(R^\perp\Theta_0)\cdot\nabla\phi\Big]\overline{\Big[\mathcal{L}^{-1}\phi\Big]}dx\bigg|\\
\nonumber &\hspace{0.2in}\leq \sum_{\substack{j,k,\ell\neq 0\\j+k-\ell=0}} |\widehat{R}_j|\,|\widehat{\phi}_k|\, |\widehat{\phi}_\ell|\, \bigg|\frac{|k|\log^{a/2}(\kappa+|\ell|)}{|\ell|^{1/2}}\\
&\hspace{2.1in}\cdot\bigg(\frac{\log^{a/2}(\kappa+|\ell|)}{|\ell|^{1/2}}-\frac{\log^{a/2}(\kappa+|k|)}{|k|^{1/2}}\bigg)\bigg|.\label{eq3-log}
\end{align}

As before, the three vectors $j$, $k$, and $\ell$ form a triangle, with length of the two largest sides comparable.  We again have three cases.  Set $m=\min \{|j|,|k|,|\ell|\}$.  If $m=|\ell|$, then we have $|j|\sim |k|$, and thus
\begin{align*}
&\frac{|k|\log^{a/2}(\kappa+|\ell|)}{|\ell|^{1/2}}\bigg|\frac{\log^{a/2}(\kappa+|\ell|)}{|\ell|^{1/2}}-\frac{\log^{a/2}(\kappa+|k|)}{|k|^{1/2}}\bigg|\\
&\hspace{0.2in}\lesssim \frac{|k|\log^{a/2}(\kappa+|\ell|)}{|\ell|^{1/2}}
\sim \frac{|j|\log^{a/2}(\kappa+|\ell|)}{|\ell|^{1/2}}.
\end{align*}
Alternatively, if $m=|k|$, then $|\ell|\sim |j|$, and
\begin{align*}
&\frac{|k|\log^{a/2}(\kappa+|\ell|)}{|\ell|^{1/2}}\bigg|\frac{\log^{a/2}(\kappa+|\ell|)}{|\ell|^{1/2}}-\frac{\log^{a/2}(\kappa+|k|)}{|k|^{1/2}}\bigg|\\
&\hspace{0.2in}\lesssim \frac{|k|^{1/2}\log^{a/2}(\kappa+|\ell|)\log^{a/2}(\kappa+|k|)}{|\ell|^{1/2}}\\
&\hspace{0.2in}\lesssim \frac{|j|\log^{a/2}(\kappa+|\ell|)}{|\ell|^{1/2}},
\end{align*}
while if $m=|j|$, then $|k|\sim |\ell|$ and by the mean value theorem,
\begin{align*}
&\frac{|k|\log^{a/2}(\kappa+|\ell|)}{|\ell|^{1/2}}\bigg|\frac{\log^{a/2}(\kappa+|\ell|)}{|\ell|^{1/2}}-\frac{\log^{a/2}(\kappa+|k|)}{|k|^{1/2}}\bigg|\\
&\hspace{0.2in}\lesssim \frac{|k|\log^{a}(\kappa+|\ell|)|j|}{|\ell|^{2}}\\
&\hspace{0.2in}\lesssim \frac{|j|\log^{a/2}(\kappa+|\ell|)}{|\ell|^{1/2}}.
\end{align*}

Combining the above with the Cauchy-Schwarz inequality, we get, for any $\varepsilon>0$,
\begin{align*}
(\ref{eq3-log})&\leq C\sum_{j,\ell\neq 0} \frac{|\widehat{R}_j|\, |\widehat{\phi}_{\ell-j}|\, |\widehat{\phi}_\ell|\, |j|\, \log^{a/2}(\kappa+|\ell|)}{|\ell|^{1/2}}\\
&\leq C\lVert \Lambda^{2+\varepsilon}\Theta_0\rVert_{L^2}\lVert \phi\rVert_{L^2}\lVert \mathcal{L}^{-1/2}\phi\rVert_{L^2}\\
&\leq \frac{1}{2}\lVert \phi\rVert_{L^2}^2+C\lVert \Lambda^{2+\varepsilon}\Theta_0\rVert_{L^2}^2\lVert \mathcal{L}^{-1/2}\phi\rVert_{L^2}^2,
\end{align*}
and thus
\begin{align*}
\frac{1}{2}\lVert \phi\rVert_{L^2}^2+\alpha\lVert \mathcal{L}^{-1/2}\phi\rVert_{L^2}^2\leq \lVert \mathcal{L}^{-1}A\phi\rVert_{L^2}\lVert \phi\rVert_{L^2}+C\lVert \Lambda^{2+\varepsilon}\Theta_0\rVert_{L^2}^2\lVert \mathcal{L}^{-1/2}\phi\rVert_{L^2}^2
\end{align*}
so that if $\alpha$ is chosen sufficiently large depending on $C$ and $\lVert \Lambda^{2+\varepsilon}\Theta_0\rVert_{L^2}$, %{\color{red}(Need a uniform bound for this?)}),
we obtain (\ref{eq2-log}) as desired.
\end{proof}

We now prove Theorem $\ref{thm2}$.  As in the proof of Theorem $\ref{thm1}$, we let $\varphi$ be an eigenfunction associated to the eigenvalue $\mu$, and we will show that there exists a constant $c_0>0$ such that for all $\varepsilon>0$, the log-supercritical evolution of $\tth$ with data $\tth(0,\cdot)=\varepsilon \varphi$ will satisfy $$\lVert \tth(t_*)\rVert_{L^2}\geq c_0$$ for some $t_*>0$.

\begin{proof}[Proof of Theorem \ref{thm2}]
Fix $\varepsilon>0$, and let $\tth$ denote the solution to (\ref{eq-perturbation}) with initial data $\tth(0)=\varepsilon\varphi$. Thanks to the global wellposedness result for (\ref{eq1-log}) given by Theorem \ref{thm-log-gwp}, this solution is global in time, with
\begin{align*}
\sup_{t>t_0}\,\, \lVert \tth(t)\rVert_{H^3}\leq \sup_{t>t_0}\,\, \|\theta(t)\|_{H^3}+\|\Theta_0\|_{H^3}\leq C(a,k,r_0,t_0,\Theta_0,f),
\end{align*}
for all $t_0>0$, where $\theta=\Theta_0+\tth$ solves (\ref{eq1-log}) and where the constant $C(a,k,r_0,t_0,\Theta_0,f)$ may be chosen to be independent of $\varepsilon$.  Moreover, since $f\in H^3(\bT^2)$, by using a bootstrap argument which is similar to the proof of Proposition \ref{prop2}, we have $\Theta_0\in H^{7/2}(\bT^2)$ and $\varphi\in H^{5/2}(\bT^2)$. Therefore, the above estimate together with the local theory gives
$$
\sup_{t>0}\,\, \lVert \tth(t)\rVert_{H^{5/2}}\leq  C,
$$
where $C$ is independent of $\varepsilon$.

As in the proof of Theorem $\ref{thm1}$, we remark that the Duhamel formula gives
\begin{align*}
\tth(t)
&=\varepsilon e^{tL}\varphi+\int_0^t e^{(\lambda+\delta)(t-s)}e^{(t-s)L_{\textrm{log},\delta}}N(\tth)(s)\, ds%\label{eq-duhamel-log}
\end{align*}
for all $t\in [0,+\infty)$.

Let $t\in [0,\infty)$ be given and fix parameters $\sigma\in (0,1)$ and $A>\lVert \varphi\rVert_{L^2}$ to be specified later.  Define $0<T\leq +\infty$ by setting $$T:=\sup\bigg\{\tau>0:\lVert \tth(t)\rVert_{L^2}\leq A\varepsilon e^{\lambda t}\,\, \textrm{for all}\,\, 0\leq t\leq \tau\bigg\},$$ and note that the set inside the supremum is nonempty as a consequence of the continuity of $t\mapsto \tth(t)$ in $L^2$ and the lower bound on the choice of the parameter $A$.

The Minkowski inequality then gives
\begin{align}
\nonumber \varepsilon\lVert e^{tL}\varphi\rVert_{L^2}&\leq \lVert \tth(t)\rVert_{L^2}+\int_0^t e^{(\lambda+\delta)(t-s)}\lVert e^{(t-s)L_{\textrm{log},\delta}}N(\tth)(s)\rVert_{L^2}\, ds,
\end{align}
so that by Lemma $\ref{lemma-ldelta-log}$, and the definition (\ref{def-n}) of $N(\tth)$, we have
\begin{align*}
%\nonumber
\lVert \tth(t)\rVert_{L^2}&\geq \varepsilon\lVert e^{tL}\varphi\rVert_{L^2}-CB(t;\tth)\\
&=\varepsilon e^{\lambda t}\lVert \varphi\rVert_{L^2}-CB(t;\tth),\quad \textrm{for}\quad 0\leq t\leq T,%\label{eq-B-log}
\end{align*}
where we set
\begin{align}
B(t;\tth):=\int_0^t \frac{e^{(\lambda+\delta)(t-s)}}{(t-s)^{\sigma}}\lVert \tu(s)\cdot \nabla\tth(s)\rVert_{L^2}^{1-\sigma}\lVert \mathcal{L}^{-1}[\tu(s)\cdot\nabla\tth(s)]\rVert_{L^2}^\sigma\, ds\label{def-B-log}
\end{align}
and
\begin{align*}
\tu:=R^\perp\tth.
\end{align*}

Fix $t\in [0,T]$, $s\in (0,t)$ and write $\tu=\tu(s)$, $\nabla\tth=\nabla\tth(s)$.  Then
\begin{align*}
\|\tu\cdot\nabla\tth\|_{L^2}&\le \|\tu\|_{L^{2}}\|\nabla\tth\|_{L^{\infty}}\\
&\le C\|\tth\|_{L^{2}}\|\tth\|_{\dot{H}^{5/2}},
\end{align*}
where we used the boundedness of Riesz transforms on $L^2$ and the two-dimensional Sobolev embedding $H^{3/2}(\mathbb{T}^2)\hookrightarrow L^\infty(\mathbb{T}^2)$.

Similarly, by the fractional Leibniz rule, the boundedness of Riesz transforms in $L^p$, we get
\begin{align*}
\|\mathcal{L}^{-1}(\tu\cdot\nabla\tth)\|_{L^2}&=\|\mathcal{L}^{-1}\divop (\tu \tth)\|_{L^2}\\
&\le C\|\Lambda^\rho (\tu \tth)\|_{L^2}\\
&\le C\|\tth\|_{L^4}\|\Lambda^\rho \tth\|_{L^4}\\
&\le C\|\tth\|_{L^2}^{2(4-\rho)/5}\|\tth\|_{\dot{H}^{5/2}}^{2(1+\rho)/5}.
\end{align*}
where we used the Gagliardo-Nirenberg inequalities
\begin{align*}
\|\tth\|_{L^4}&\lesssim \|\tth\|_{\dot{H}^{1/2}}\lesssim \|\tth\|_{L^2}^{4/5}\|\tth\|_{\dot{H}^{5/2}}^{1/5}
\end{align*}
and
\begin{align*}
\|\Lambda^\rho\tth\|_{L^4}&\lesssim \|\tth\|_{\dot{H}^{(1/2)+\rho}}\lesssim \|\tth\|_{L^2}^{(4-2\rho)/5}\|\tth\|_{\dot{H}^{3}}^{(1+2\rho)/5}.
\end{align*}

We now choose $\rho$ sufficiently close to $0$ and $\sigma$ sufficiently close to $1$ to get
$$
\|\tu\cdot \nabla\tth\|_{L^2}^{1-\sigma}\|\mathcal{L}^{-1}(\tu\cdot \nabla\tth)\|_{L^2}^{\sigma}
\le C \|\tth\|_{L^2}^{1+\beta} \|\tth\|_{\dot{H}^{5/2}}^{1-\beta}
$$
for some $\beta\in (0,1)$ (for instance, choosing $\rho=1/4$ and $\sigma=3/4$ leads to $\beta=3/8$).

Combining this bound with the definition (\ref{def-B-log}) of $B(t;\tth)$, the rest of the argument proceeds as in the proof of Theorem $\ref{thm1}$ above.  We sketch the argument for the convenience of the reader.  In particular, if we require $0<\delta<\lambda\beta/2$ on $\delta$, and recall that our choice of $T$ gives $\|\tth(s)\|\leq A\varepsilon e^{\lambda s}$ for $s\leq t$, then we get
\begin{align*}
B(t;\tth)&\leq C_1 \int_0^t \frac{e^{\lambda(1+\beta)(t-s)}}{(t-s)^{\sigma}}
\|\tth(s)\|_{L^2}^{1+\beta}e^{-\frac{\lambda\beta}{2}(t-s)}\, ds
\leq C_1(A\varepsilon e^{\lambda t})^{1+\beta}%\label{eq-B-estimate-log}
\end{align*}
for $0\leq t\leq T$.

Then, choosing both
\begin{align*}
C_2:=\left[\frac{A-\lVert \varphi\rVert_{L^2}}{C_1A^{1+\beta}}\right]^{1/\beta}
\end{align*}
and $A=3\lVert \varphi\rVert_{L^2}/2$ as in the proof of Theorem $\ref{thm1}$ gives
\begin{align*}
\lVert \tth(t_*)\rVert_{L^2}\geq \left[\frac{1}{3^{\beta+1}C_1}\right]^{1/\beta}
\end{align*}
with $0<t_*=\lambda^{-1}\log(C_2/\varepsilon)<T$.  This completes the argument.
\end{proof}

\end{document}